\newtheorem{theorem}{Theorem}
\newtheorem*{theorem*}{Theorem}
\newtheorem{lemma}[theorem]{Lemma}
\newtheorem{corollary}[theorem]{Corollary}
\newtheorem{proposition}[theorem]{Proposition}
\theoremstyle{definition}
\newtheorem{definition}[theorem]{Definition}
\newcommand{\Spe}{\rm Spec }
\newcommand{\mdeg}{{\rm mdeg}}
\newcommand{\lf}{\left\lfloor}
\newcommand{\rf}{\right\rfloor}
\newcommand{\init}{{\rm in_\prec}}
\date{\today}
\title[The Relative Canonical Ideal]{The Relative Canonical Ideal of the Artin-Schreier-Kummer-Witt family of curves}
\author[H. Charalambous ]{Hara Charalambous  }
\address{
Department of Mathematics	,			
Aristotle University of Thessaloniki School of Sciences,
54124, Thessaloniki, Greece}
\email{hara@math.auth.gr}
\author[K. Karagiannis]{Kostas Karagiannis }
\address{
Department of Mathematics	,			
Aristotle University of Thessaloniki School of Sciences,
54124, Thessaloniki, Greece}
\email{kkaragia@math.auth.gr}
\author[A. Kontogeorgis]{Aristides Kontogeorgis}
\address{Department of Mathematics, National and Kapodistrian  University of Athens
Pane\-pist\-imioupolis, 15784 Athens, Greece}
\email{kontogar@math.uoa.gr}
\date \today
\newcommand{\aprod}{\mathop{\operator@font \hbox{\Large$\ast$}}}
\begin{document}

\begin{abstract}
We study the canonical model of the Artin-Schreier-Kummer-Witt flat family of curves over a ring of mixed characteristic. We first prove the relative version of a classical theorem by Petri, then use the model proposed by Bertin-M\'ezard to construct an explicit generating set for the relative canonical ideal. As a byproduct, we obtain a combinatorial criterion for a set to generate the canonical ideal, applicable to any curve satisfying the assumptions of Petri's theorem. 
\end{abstract}

\maketitle

\section{Introduction}

\subsection{The canonical ideal}
Let $X$ be a complete, non-singular, non-hyperelliptic curve of genus $g\geq 3$ over an algebraically closed field $F$ of arbitrary characteristic. Let $\Omega_{X/F}$ denote the sheaf of holomorphic differentials on $X$ and, for $n\geq 0$, let $\Omega_{X/F}^{\otimes n}$ denote the $n-$th tensor power of $\Omega_{X/F}$. The following classical result is usually referred to in the bibliography as {\em Petri's Theorem}, even though it is due to Max Noether, Enriques and Babbage as well:
\begin{theorem}\label{Petri}
\leavevmode
\begin{enumerate}
	\item 
The canonical map 
\begin{equation*}
\phi:\mathrm{Sym}(H^0(X,\Omega_{X/F}))\rightarrow \bigoplus_{n \geq 0} 
H^0(X,\Omega_{X/F}^{\otimes n})
\end{equation*}
is surjective. 
\item The kernel $I_X$ of $\phi$ is generated by elements of degree $2$ and $3$.
\item $I_X$ is generated by elements of degree $2$ except in the following cases
\begin{enumerate}
\item $X$  is a non-singular plane quintic (in this case $g=6$).
\item $X$ is trigonal, i.e. a triple covering of $\mathbb{P}^1_F$
\end{enumerate}
\end{enumerate}
\end{theorem}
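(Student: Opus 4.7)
The plan is to follow the classical Petri-Noether-Enriques-Babbage approach, treating the three parts of the statement separately. For part (1), the surjectivity of $\phi$ is equivalent to Max Noether's theorem for canonical curves, i.e.\ the multiplication maps $\mu_n: H^0(X, \Omega_{X/F}) \otimes H^0(X, \Omega_{X/F}^{\otimes n}) \to H^0(X, \Omega_{X/F}^{\otimes (n+1)})$ are surjective for all $n \geq 1$. Since $X$ is non-hyperelliptic and $g \geq 3$, the sheaf $\Omega_{X/F}$ is very ample, so a generic pencil $V \subset H^0(X, \Omega_{X/F})$ of dimension $2$ is base-point-free. I would then invoke the base-point-free pencil trick, together with an induction on $n$ using the vanishing of $H^1$ of the relevant twists, to conclude.

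For part (2), the key tool is the \emph{Petri basis}: fix a point $P \in X$ that is not a Weierstrass point, so that its gap sequence is $\{1, 2, \ldots, g\}$, and pick a basis $\omega_1, \ldots, \omega_g$ of $H^0(X, \Omega_{X/F})$ with $\mathrm{ord}_P(\omega_i) = i - 1$. Extending this to bases of $H^0(X, \Omega_{X/F}^{\otimes 2})$ and $H^0(X, \Omega_{X/F}^{\otimes 3})$ also adapted to the filtration by order of vanishing at $P$, one writes down Petri's explicit quadratic relations $R_{ij}$ and cubic relations $\widetilde{R}_{ij}$, whose leading monomials form a combinatorially prescribed staircase with respect to a suitable term order on $\mathrm{Sym}(H^0(X, \Omega_{X/F}))$. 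The plan is then to show by induction on degree that these relations generate $I_X$: given $f \in I_X$ of degree $n \geq 2$, rewrite its leading term modulo the Petri relations, using the surjectivity of part (1) to control the remainder.

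For part (3), the quadrics alone suffice unless certain structure constants appearing in the $\widetilde{R}_{ij}$ vanish, a degeneration that forces the canonical image of $X$ to lie on a surface cut out by fewer quadrics than generic canonical curves permit. A direct case analysis then isolates exactly two possibilities: either the canonical image lies on a rational normal scroll, which is equivalent to $X$ admitting a $g^1_3$, or it lies on the Veronese surface of $\mathbb{P}^2$ in $\mathbb{P}^5$, forcing $g = 6$ and $X$ to be a smooth plane quintic.

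The hard part is part (2): the combinatorial bookkeeping needed to verify that the reduction of an arbitrary element of $I_X$ terminates and produces only combinations of the Petri relations. This requires careful analysis of the leading terms of products of the $\omega_i$ and of the auxiliary basis elements in higher tensor powers. I would expect the combinatorial criterion announced in the abstract to be precisely the abstraction of this reduction, phrased as: any set of relations whose leading terms match the Petri staircase automatically generates $I_X$, a formulation that then applies more broadly than to curves for which Petri's original basis can be written down explicitly.
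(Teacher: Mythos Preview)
The paper does not prove this theorem. Theorem~\ref{Petri} is stated in the introduction as a classical result attributed to Noether, Enriques, Babbage and Petri, and the reader is referred to Saint-Donat's article for a modern treatment in arbitrary characteristic. It is then used as a black box throughout: in particular, Proposition~\ref{dimension-criterion} \emph{invokes} Petri's theorem to equate $\dim_F(S/I_X)_2$ with $3(g-1)$, and Theorem~\ref{relative-canonical-embedding} invokes it to deduce exactness of the top and bottom rows of the diagram.

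Your outline of the classical proof is reasonable and broadly correct as a sketch of what one finds in Saint-Donat or in Arbarello--Cornalba--Griffiths--Harris. But your final paragraph misreads the paper's logic. The combinatorial criterion announced in the abstract is Proposition~\ref{dimension-criterion}, and it is not an abstraction of the Petri reduction used to \emph{prove} Theorem~\ref{Petri}; it goes in the opposite direction. It takes Theorem~\ref{Petri} as given, so that $\dim_F(S/I_X)_2=3(g-1)$ is known, and then observes that for any candidate set $G\subseteq (I_X)_2$, the inequality $\dim_F(S/\langle\init(G)\rangle)_2\le 3(g-1)$ forces $\langle G\rangle_2=(I_X)_2$, whence $\langle G\rangle=I_X$ by part~(3) of Petri. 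The criterion is a tool for verifying that \emph{specific} quadratic relations generate the canonical ideal of a \emph{specific} curve, not a device for reproving Petri in general.
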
 
The standard terminology for the algebro-geometric objects relevant to Petri's Theorem uses the adjective {\em canonical}: the sheaf $\Omega_{X/F}$ is the {\em canonical bundle}, the ring $\bigoplus_{n \geq 0} 
H^0(X,\Omega_{X/F}^{\otimes n})$ is the {\em canonical ring}, the map $\phi$ is the {\em canonical map} and the kernel $I_X=\ker\phi$ is the {\em canonical ideal}.\\

More details on the canonical map will be given in section \ref{sec:canonical-ideal}; for a modern treatment over a field of arbitrary characteristic we refer to the article of B. Saint-Donat \cite{Saint-Donat73}.\\

The problem of determining explicit generators for the canonical ideal has attracted interest by researchers over the years. A non-exhaustive list of techniques employed includes the use of Weierstrass semigroups \cite{Oliv2}, the theory of 
Gr\"obner bases \cite{MR3525467},
 minimal free resolutions and syzygies \cite{MR1273472}. The latter are also central to Green's conjecture, solved by Voisin in \cite{MR2157134}. The purpose of this paper is to study Petri's Theorem in the context of {\em lifts of curves} as discussed below.

\subsection{Lifts of curves}

Let $k$ be a field of prime characteristic $p>0$. {\em A lift of $k$ to characteristic 0} is the field of fractions $L$ of any integral extension of the ring of Witt vectors $W(k)$, a classical construction by Witt \cite{MR1581526}, \cite{Rabinoff2014-dy} that generalizes the $p-$adic integers $\mathbb{Z}_p=W(\mathbb{F}_p)$. In what follows the field $k$ will be assumed to be algebraically closed. Note that integral extensions of $W(k)$ are discrete valuation rings of mixed characteristic, with residue field $k$.

Now consider a projective, non-singular curve $\mathcal{X}_0$ over $k$ and let $R$ be an integral extension of $W(k)$. {\em A lift of $\mathcal{X}_0/k$ to characteristic $0$}, 
is a curve $\mathcal{X}_\eta$ over $L={\rm Quot} R$, obtained as the generic fibre of a flat family of curves $\mathcal{X}/R$ whose special fibre is $\mathcal{X}_0/k$. Such lifts have been extensively used by arithmetic geometers to reduce characteristic $p$ problems to the, much better understood, characteristic $0$ case. Maybe the idea of lifting has his origin in the approach of  
J.P.Serre in \cite{SerreMexico}, who used used these ideas, before the introduction of \'etale cohomology, in his attempt to define an appropriate cohomology theory which could solve the Weil conjectures. The lifting of an algebraic variety to characteristic zero  is unfortunately  not always possible and Serre was able to give such an example, see \cite{MR0132067}. The progress made in deformation theory by Schlessinger \cite{Sch} identified the lifting obstruction as an element in $H^2(X,T_X)$, see \cite[prop. 1.2.12]{MR2247603}, \cite[5.7 p.41]{MR2583634}.

\subsection{Lifts of curves with automorphisms}

Let $\mathcal{X}_0/k$ be a projective, non-singular curve as in the previous section. Such a curve can always lifted in characteristic zero, since the obstruction lives in the second cohomology which is always zero for curves. However, one might ask if it is possible to deform the curve together with its automorphism group, see \cite{BeMe2002}. This is not always possible, since Hurwitz's bound for the order of automorphism groups in characteristic $0$ ensures that the answer for a general group $G$ is negative, see \cite{MatignonGreen98}\cite{Nak}. In the same spirit, J. Bertin in \cite{BertinCRAS} provided an obstruction for the lifting based on the Artin representation which vanishes for cyclic groups. Note that, even in positive characteristic, the order of cyclic automorphism groups is bounded by the classical Hurwitz bound, see \cite{NakAbel}.  
The existence of such a lift for cyclic $p-$groups was conjectured by Oort in \cite{MR927980} and was laid to rest three decades later by Obus-Wewers \cite{ObusWewers} and Pop \cite{MR3194816}.\\

In the meantime, the case for $G=\mathbb{Z}/p\mathbb{Z}$ was studied by Oort himself  and Sekiguchi-Suwa \cite{MR1011987,SeSu95,SOS91}, who unified the theory of cyclic extensions of the projective line in characteristic $p$ ({\em Artin-Schreier extensions}) and that of cyclic extensions of the projective line in characteristic $0$ ({\em Kummer extensions}). The unified theory is usually referred to as {\em Kummer-Artin Schreier-Witt} theory or {\em Oort-Sekiguchi-Suwa} (OSS) theory. Using these results, Bertin-M\'ezard in \cite{BeMe2002} provided an explicit description of the affine model for the Kummer curve in terms of the affine model for the Artin-Schreier curve. Following this construction, Karanikolopoulos and the third author in \cite{KaranProc} proposed the study of the Galois module structure of the relative curve $\mathcal{X}/R$. As a byproduct, they found an explicit basis of the $R-$module of relative holomorphic differentials $H^0(\mathcal{X},\Omega_{\mathcal{X}})$, using Boseck's work \cite{Boseck} on holomorphic differentials.\\

The main result of this paper is the determination of an explicit generating set for the relative canonical ideal of the unified Kummer-Artin Schreier-Witt theory, using the Bertin-M\'ezard model and the relative basis of  \cite{KaranProc} for $1-$differentials. We conclude the introduction by giving an outline of our arguments and techniques.

\subsection{Outline}
In Section \ref{sec:canonical-ideal} we give details on the canonical map and we prove a combinatorial criterion for a subset of the canonical ideal to be a generating set. The main result of this section is Proposition \ref{dimension-criterion} which says that to check if a set $G$ of homogeneous polynomials of degree two generates the canonical ideal, it suffices to check whether $\dim_F\left( S/\langle \init(G)\rangle\right)_2\leq 3(g-1)$. The above criterion reduces the problem of finding a generating set for the canonical ideal to counting initial terms; we note that the criterion is applicable to any curve satisfying the assumptions of Petri's theorem, with the exception of plane quintics and trigonal curves.\\

In Section \ref{sec:BM-model} we formalize the lifting problem for the canonical ideal of the relative curve. First, we review the results of Bertin-M\'ezard on the explicit construction of the relative curve $\mathcal{X}/R$, then, in Theorem \ref{relative-canonical-embedding}, we define the relative canonical map and prove an analogue of Petri's Theorem for the relative curve $\mathcal{X}/R$, by constructing a diagram 
\begin{equation*}
\xymatrix{
	0 \ar[r] & I_{\mathcal{X}_\eta}\ar@{^{(}->}[r] & S_L:=L[\omega_1,\ldots,\omega_g] \ar@{->>}[r]^-{\phi_\eta} & 
	\displaystyle\bigoplus_{n=0}^\infty H^0(\mathcal{X}_\eta,\Omega_{\mathcal{X}_\eta/L}^{\otimes n}) \ar[r] & 0  
	\\
	0 \ar[r] &
	 I_{\mathcal{X}}\ar@{^{(}->}[r] \ar@{^{(}->}[u]_{\otimes_R L}   \ar@{->>}[d]^{\otimes_R R/\mathfrak{m}}
	 & 
	S_R:=R[W_1,\ldots,W_g] \ar@{->>}[r]^-{\phi} \ar@{^{(}->}[u]_{\otimes_R L}  \ar@{->>}[d]^{\otimes_R R/\mathfrak{m}}
	& 
	\displaystyle\bigoplus_{n=0}^\infty
	H^0(\mathcal{X},\Omega_{\mathcal{X}/R}^{\otimes n}) \ar[r] \ar@{^{(}->}[u]_{\otimes_R L}  \ar@{->>}[d]^{\otimes_R R/\mathfrak{m}}
	& 0 
	\\
	0 \ar[r] & I_{\mathcal{X}_0}\ar@{^{(}->}[r] & S_k:=k[w_1,\ldots,w_g] \ar@{->>}[r]^-{\phi_0} &
	\displaystyle\bigoplus_{n=0}^\infty H^0(\mathcal{X}_0,\Omega_{\mathcal{X}_0/k}^{\otimes n}) \ar[r] & 0 
}
\end{equation*}
whose rows are exact and where each square is commutative. In Lemma  \ref{GeneralizeSpec}, we give a Nakayama-type criterion that reduces the problem of finding a generating set for the relative canonical ideal $I_\mathcal{X}$ to finding compatible generating sets for the canonical ideals on the two fibres. In short, we prove that if $G$ is a set of homogeneous polynomials in $I_\mathcal{X}$ such that $G\otimes_R L$ generates $I_{\mathcal{X}_\eta}$ and $G \otimes_R k$ generates $I_{\mathcal{X}_0}$ then $G$ generates $I_{\mathcal{X}}$.\\

In Section \ref{two-fibers} we state and prove results on the generators of the canonical ideal which are common for the two fibres.  To facilitate the counting, we set a correspondence between the variables of the polynomial ring in Petri's Theorem and a discrete set of points $A\subseteq \mathbb{Z}^2$. In Proposition  \ref{binomials-all} we find a binomial ideal contained in the canonical ideal, leading us to build the generating sets for the two fibres on sets of binomials. Further, in Proposition \ref{corresp-binoms-Mink}, we extend the correspondence between the variables and the set $A$ to a correspondence between the binomials and the Minkowski sum $A+A$ - see \cite[p. 28]{MR1311028}. The cardinality of the Minkowski sum is too big, so Proposition \ref{dimension-criterion} is not applicable to the binomials; in Definition \ref{definition-Ci} we find appropriate subsets of $A+A$ whose cardinalities are bounded by $3(g-1)$.\\

It turns out that these subsets of the Minkowski sum match exactly to the missing generators for the canonical ideals of the two fibers. The non-binomial generators differ for each fibre, as they are determined by the affine model of each curve and thus, we describe them separately, in Sections \ref{sec:generic-fibre} and \ref{sec:special-fibre}. The missing generators, given in Definition \ref{trinom-gens-generic} for the generic fibre and Definition \ref{trinom-gens-special} for the special fibre, are grouped in each case with the binomials of Definition \ref{binom-gens-all} and give the full generating sets in Theorem \ref{theorem 1} and Theorem \ref{theorem 2} respectively. The proofs of the two theorems are combinatorial in the sense that they are based on the counting criterion of Proposition  \ref{dimension-criterion}.\\

Section \ref{sec:reduction} contains the main result of this paper, Theorem \ref{main-theorem}: The generators of the canonical ideal of the relative curve are either binomials of the form
\[W_{N_1,\mu_1}W_{N'_1,\mu'_1}-W_{N_2,\mu_2}W_{N'_2,\mu'_2}\]
or polynomials of the form
\[
W_{N,\mu}W_{N',\mu'}-
W_{N'',\mu}W_{N''',\mu'''}+
\sum_{i=1}^{p-1}\sum_{j=j_{\min}(i)}^{(p-i)q}\lambda^{i-p}\binom{p}{i}c_{j,p-i}W_{N_j,\mu_i}W_{N_j',\mu_i'}.
\]
The reader will have to refer to Section \ref{sec:reduction} for the details on the indices of the variables and the coefficients. For the proof of Theorem \ref{main-theorem} we make essential use of our Nakayama-type Lemma  \ref{GeneralizeSpec} and Theorem \ref{relative-canonical-embedding}, our analogue to Petri's Theorem,  as reduction and thickening - \`a la Faltings 
\cite{MR740897} - are checked on the category of vector spaces, instead of the category of rings.\\

\section{A criterion for generators of the canonical ideal}
\label{sec:canonical-ideal}
Throughout this section, $X$ is a complete, non-singular, non-hyperelliptic curve of genus $g\geq 3$ over an algebraically closed field $F$ of arbitrary characteristic, which is neither a plane quintic nor trigonal. As in the introduction, let $\Omega_{X/F}$ denote the sheaf of holomorphic differentials on $X$ and, for $n\geq 1$,  let $\Omega_{X/F}^{\otimes n}$ be the $n-$th exterior power of $\Omega_{X/F}$; its global sections $H^0(X,\Omega_{X/F}^{\otimes n})$ form an $F-$vector space of dimension $d_{n,g}$ where
\begin{equation}\label{dng}
d_{n,g}=
\begin{cases}
g, & \text{ if } n=1\\
(2n-1)(g-1), & \text{ if } n>1 .
\end{cases}
\end{equation}
The direct sum of the $F-$vector spaces $H^0(X,\Omega_{X/F}^{\otimes n})$ is equipped with the structure of a graded ring: multiplication in $\displaystyle \bigoplus_{n\geq 0} H^0(X,\Omega_{X/F}^{\otimes n})$ is defined via
\begin{eqnarray*}
H^0(X,\Omega_{X/F}^{\otimes n})\times H^0(X,\Omega_{X/F}^{\otimes m})&\rightarrow& H^0(X,\Omega_{X/F}^{\otimes (n+m)}) \\
fdx^{\otimes n}\cdot gdx^{\otimes m} &\mapsto& fgdx^{\otimes (n+m)}.\nonumber
\end{eqnarray*}
Choosing coordinates $\omega_1,\ldots,\omega_g$ for $\mathbb{P}^{g-1}_F$ one can identify the symmetric algebra $\mathrm{Sym}(H^0(X,\Omega_{X/F}))$ of Petri's Theorem with the graded polynomial ring $S:=F[\omega_1,\ldots,\omega_g]$ and we have that
\begin{equation}\label{graded-ring-structure}
S=\bigoplus_{n\geq 1} S_n\text{ where } S_n=\{f\in S:\deg f=n \}.
\end{equation}
Choosing a basis $\mathbf{v}=\{f_1dx,\ldots,f_gdx\}$ for $H^0(X,\Omega_{X/F})$ allows us to extend the assignment $\omega_i\mapsto f_i dx$ and define a homogeneous map of graded rings
\begin{eqnarray*}
\phi: F[\omega_1,\ldots,\omega_g]&\rightarrow& \bigoplus_{n \geq 0} 
H^0(X,\Omega_{X/F}^{\otimes n})\\
\omega_1^{a_1}\cdots\omega_g^{a_g}&\mapsto& f_1^{a_1}\cdots f_g^{a_g} dx^{\otimes(a_1+\cdots a_g)}.\nonumber
\end{eqnarray*}
Note that when an emphasis on the basis $\mathbf{v}$ is desired, the map $\phi$ will be denoted by $\phi_{\mathbf{v}}$. The kernel of $\phi$, denoted by $I_X$, is a graded ideal, so that in analogy to eq. (\ref{graded-ring-structure}) we may write
\begin{equation*}
I_X=\bigoplus_{n\geq 1} (I_X)_n\text{ where } (I_X)_n=\{f\in I_X:\deg f=n \}.
\end{equation*}
In the context we are working, Petri's Theorem can be rewritten as follows:
\begin{theorem}\label{Petri1}
The canonical map $\phi$ is surjective and $I_X=\langle (I_X)_2\rangle$.\\
\end{theorem}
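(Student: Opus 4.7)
The plan is to recognize Theorem \ref{Petri1} as Theorem \ref{Petri} rewritten in the notation of this section, under the blanket hypothesis stated at the start of Section \ref{sec:canonical-ideal} that $X$ is neither a non-singular plane quintic nor trigonal. The surjectivity of $\phi$ is exactly part (1) of Theorem \ref{Petri}, once we use the identification $\mathrm{Sym}(H^0(X,\Omega_{X/F})) \cong F[\omega_1,\ldots,\omega_g]$ afforded by the chosen basis $\mathbf{v} = \{f_1 dx,\ldots, f_g dx\}$ of $H^0(X,\Omega_{X/F})$ and the corresponding coordinates $\omega_1,\ldots,\omega_g$; the map $\phi_{\mathbf v}$ defined by $\omega_i \mapsto f_i dx$ is tautologically the same map up to this identification.

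For the assertion $I_X = \langle (I_X)_2 \rangle$, I would invoke parts (2) and (3) of Theorem \ref{Petri} jointly: by (2), the graded ideal $I_X$ is generated in degrees $2$ and $3$, while (3) isolates the two exceptional families in which degree-$3$ generators are genuinely needed, namely non-singular plane quintics and trigonal curves. Both of these are ruled out by the running hypothesis of this section, so the degree-$3$ generators become redundant and $I_X$ is generated in degree $2$ alone.

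The only point that merits explicit verification is that the property of being generated in a prescribed degree is independent of the choices of basis $\mathbf{v}$ and of coordinates $\omega_i$: any two such choices differ by a $\mathrm{GL}_g(F)$-change of variables on $S$, which is linear and therefore homogeneous of degree $1$, preserving the grading on $I_X$. Accordingly, I do not anticipate any serious obstacle in the proof; the theorem is packaged as a separate statement purely to fix the precise form and notation required when applying the counting criterion of Proposition \ref{dimension-criterion} and the Nakayama-type argument of Lemma \ref{GeneralizeSpec} in later sections.
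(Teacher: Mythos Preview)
Your proposal is correct and matches the paper's treatment: the paper presents Theorem \ref{Petri1} as a direct restatement of Theorem \ref{Petri} under the standing hypotheses of Section \ref{sec:canonical-ideal}, with no separate proof given. Your additional remarks on basis-independence are sound but not needed for the paper's purposes.
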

We fix a term order $\prec$ and note that each $f\in S$ has a unique leading term with respect to $\prec$, denoted by $\init(f)$. We define the initial ideal of $I_X$ as $\init(I_X)=\langle \init(f):f\in I_X \rangle$. If $S_n,\;(I_X)_n$ and $\init(I_X)_n$ are the $n-th$ graded pieces of $S,\;I_X$ and $\init(I_X)$ respectively, then both $(I_X)_n$ and $\init(I_X)_n$ are $F-$subspaces of $S_n$ and, since quotients commute with direct sums, we have that 
\begin{equation*}
\left(S/I\right)_n\cong S_n/I_n\text{ and } \left(S/\init(I)\right)_n\cong S_n/\init(I)_n.
\end{equation*}
The proposition below gives a criterion for a subset of the canonical ideal to be a generating set:
\begin{proposition}\label{dimension-criterion}
Let $G\subseteq I_X$ be a set of homogeneous polynomials of degree $2$ in $I_X$. If 
\[\dim_F\left( S/\langle \init(G)\rangle\right)_2\leq 3(g-1),\]
then $I_X=\langle G \rangle$.
\end{proposition}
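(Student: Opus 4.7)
The plan is to use the fact that passing to the initial ideal preserves Hilbert functions in each degree. More precisely, for any homogeneous ideal $I\subseteq S$ and any term order $\prec$, Macaulay's theorem gives the $F$-vector space isomorphism
\[
(S/I)_n \;\cong\; (S/\init(I))_n
\]
degree by degree, so in particular $\dim_F (I)_n = \dim_F \init(I)_n$. This is the key technical ingredient I would cite at the outset, together with Petri's Theorem \ref{Petri1} and the formula $d_{2,g}=3(g-1)$ from \eqref{dng}.

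First I would set $J:=\langle G\rangle \subseteq I_X$ and make the elementary observation that $\langle \init(G)\rangle \subseteq \init(J) \subseteq \init(I_X)$; the second inclusion is because $J\subseteq I_X$, and the first is immediate from the definition of $\init$. Taking second graded pieces and using surjections of quotients, this yields the chain of inequalities
\[
\dim_F\!\left( S/\langle\init(G)\rangle\right)_2 \;\geq\; \dim_F\!\left(S/\init(J)\right)_2 \;\geq\; \dim_F\!\left(S/\init(I_X)\right)_2.
\]
By the Macaulay equality and Petri's surjectivity, $\dim_F(S/\init(I_X))_2 = \dim_F(S/I_X)_2 = \dim_F H^0(X,\Omega_{X/F}^{\otimes 2}) = 3(g-1)$.

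Now the hypothesis $\dim_F(S/\langle\init(G)\rangle)_2 \leq 3(g-1)$ forces equality throughout the chain, and in particular $\dim_F(S/\init(J))_2 = \dim_F(S/\init(I_X))_2$. Combined with $\init(J)_2 \subseteq \init(I_X)_2$, this gives $\init(J)_2 = \init(I_X)_2$. Applying Macaulay again to $J$ and $I_X$, we conclude $\dim_F J_2 = \dim_F (I_X)_2$, and since $J_2 \subseteq (I_X)_2$ this upgrades to $J_2 = (I_X)_2$. Finally, because every element of $G$ is homogeneous of degree two, we have $J = \langle G\rangle = \langle J_2\rangle = \langle (I_X)_2\rangle$, which equals $I_X$ by Theorem \ref{Petri1}.

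There is no serious obstacle here: the argument is purely dimensional, and the only non-trivial input is the preservation of Hilbert functions under taking initial ideals. The mild subtlety to flag is that $G$ is not assumed to be a Gröbner basis (so $\langle \init(G)\rangle$ may be strictly smaller than $\init(J)$), but this only helps the inequalities go the right way and does not affect the argument.
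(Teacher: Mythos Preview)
Your proof is correct and follows essentially the same dimension-squeezing argument as the paper: both combine the Macaulay equality $\dim_F(S/I)_n=\dim_F(S/\init(I))_n$ with Petri's theorem and the value $d_{2,g}=3(g-1)$ to force $\langle G\rangle_2=(I_X)_2$, then invoke $I_X=\langle (I_X)_2\rangle$. Your version is in fact slightly more careful, since you distinguish $\langle\init(G)\rangle$ from $\init(\langle G\rangle)$ (the paper's eq.~(\ref{same-Hilb}) asserts an equality that is really only the inequality you use), but this does not affect the outcome.
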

\begin{proof}
We note that since $G\subseteq I_X,\;\langle \init(G)\rangle_2$ is a subspace of $\init(I_X)_2$. Therefore
\begin{equation}\label{initial-inclusion}
\dim_F \left(S/\init (I_X)\right)_2 = \dim_F S_2/\init (I_X)_2  \leq\dim_F S_2/\langle \init(G)\rangle_2 = \dim_F \left(S/\langle \init(G)\rangle\right)_2
\end{equation}
Moreover, by \cite{MR1363949}[Prop. 1.1] 
\begin{equation}\label{same-Hilb}
\dim_F \left(S/\init (I_X)\right)_2
=
\dim_F \left(S/I_X\right)_2 
\text{ and }
\dim_F \left(S/\langle \init(G)\rangle\right)_2
=
\dim_F \left(S/\langle G\ \rangle\right)_2.
\end{equation}
By Petri's Theorem and eq. (\ref{dng}), we have that
\begin{equation}\label{Petri-corol}
\dim_F \left(S/I_X\right)_2
= \dim_F H^0(X, \Omega_{X/F}^{\otimes 2})
=3(g-1).
\end{equation}
Combining equations (\ref{initial-inclusion}), (\ref{same-Hilb}), (\ref{Petri-corol}), and the hypothesis $\dim_F\left( S/\langle \init(G)\rangle\right)_2\leq 3(g-1)$ gives
\begin{eqnarray*}
\dim_F \big(S/I_X\big)_2 
=
\dim_F \big(S/\langle G \rangle\big)_2
&\Rightarrow&
\left(I_X\right)_2 = \langle G \rangle_2
\Rightarrow
I_X=\langle \left(I_X\right)_2 \rangle=\langle G \rangle
\end{eqnarray*}
completing the proof.
\end{proof}

\section{The canonical ideal of relative curves}
\label{sec:BM-model}

Let $k$ be an algebraically closed field of prime characteristic $\mathrm{char}(k)=p>0$. Denote by $W(k)[\zeta]$ the ring of Witt vectors over $k$ extended by a $p$-th root of unity $\zeta$ and let $\lambda=\zeta-1$. By \cite{Rabinoff2014-dy} $W(k)[\zeta]$ is a discrete valuation ring with maximal ideal $\mathfrak{m}$ and residue field isomorphic to $k$. Let $m\geq 1$ be a natural number not divisible by $p$; for any $1\leq \ell\leq p-1$ we write $m=pq-\ell$ and denote by
\begin{equation*}
 R=
 \begin{cases}
 W(k)[\zeta][[x_1,\ldots,x_q]] & \text{ if } \ell=1 \\
 W(k)[\zeta][[x_1,\ldots,x_{q-1}]] & \text{ if } \ell\neq 1
 \end{cases}
 \end{equation*} 
the Oort-Sekiguch-Suwa factor of the versal deformation ring as in \cite[sec. 3]{KaranProc}, which is a local ring with maximal ideal $\mathfrak{m}_R=\langle \mathfrak{m},\{x_i\}\rangle$. We write 
\begin{equation*}
K=\mathrm{Quot}\left(R/\mathfrak{m}\right)=
 \begin{cases}
\mathrm{Quot}\left(k[[x_1,\ldots,x_q]]\right) & \text{ if } \ell=1 \\
 \mathrm{Quot}\left(k[[x_1,\ldots,x_{q-1}]] \right)& \text{ if } \ell\neq 1
 \end{cases}
 \end{equation*} 
and consider the extension of the rational function field $K(x)$ given by the affine model
\begin{equation}\label{special-fibre}
\mathcal{X}_0: X^p-X=\frac{x^{\ell}}{a(x)^p},
\end{equation}
where
\begin{equation}\label{a(x)}
a(x)=
\begin{cases}
x^q+x_1 x^{q-1}+ \cdots + + x_{q-1} x + x_q  & \text{ if } \ell=1 \\
x^q+x_1 x^{q-1}+ \cdots + x_{q-1} x & \text{ if } \ell\neq 1.
\end{cases}
\end{equation}
Bertin-M\'ezard proved in \cite[sec. 4.3]{Be-Me} that the curve of eq. (\ref{special-fibre}) lifts to a curve over $L=\mathrm{Quot}(R)$ given by the affine model
 \begin{equation} 
\label{bm-model}
\mathcal{X}_{\eta}:y^p=\lambda^p x^\ell +a(x)^p,\text{ for }y=a(x)(\lambda X+1)
\end{equation}
which is the normalization of $R[x]$ in $L(y)$. This gives rise to a family $\mathcal{X}\rightarrow \Spe R$, with special fibre $\mathcal{X}_0$ and generic fibre $\mathcal{X}_{\eta}$: 
\begin{equation}\label{classic-diagram}
\begin{tikzcd}
\mathrm{Spec}(k)\times_{\mathrm{Spec}(R)}\mathcal{X}=\mathcal{X}_0\arrow[hookrightarrow]{r}{}\arrow[rightarrow]{d}{}& 
\mathcal{X}\arrow[hookleftarrow]{r}{} \arrow[rightarrow]{d}&\mathcal{X}_\eta=\mathrm{Spec}(L)\times_{\mathrm{Spec}(R)}\mathcal{X} \arrow[rightarrow]{d}{}\\  
\mathrm{Spec}(k)\arrow[hookrightarrow]{r}{} &\mathrm{Spec}(R)\arrow[hookleftarrow]{r}{}    &\mathrm{Spec}(L)
\end{tikzcd}
\end{equation}
For $n\geq 1$, we write $\Omega_{\mathcal{X}/R}^{\otimes n}$ for the sheaf of holomorphic polydifferentials on $\mathcal{X}$. By \cite[lemma II.8.9]{MR0463157} the $R-$modules $H^0(\mathcal{X},\Omega^{\otimes n}_{\mathcal{X}/R})$ are free of rank $d_{n,g}$ for all $n\geq 1$, with $d_{n,g}$ given by eq. (\ref{dng}). We select generators $W_1,\ldots,W_g$ for the symmetric algebra $\mathrm{Sym}(H^0(\mathcal{X},\Omega_{\mathcal{X}/R}))$ and identify it with the polynomial ring $R[W_1,\ldots,W_g]$. Similarly, we identify the symmetric algebras $\mathrm{Sym}(H^0(\mathcal{X}_\eta,\Omega_{\mathcal{X}_\eta/L}))$ and $\mathrm{Sym}(H^0(\mathcal{X}_0,\Omega_{\mathcal{X}_0/k}))$ with the polynomial rings $L[\omega_1,\ldots,\omega_g]$ and $k[w_1,\ldots,w_g]$ respectively. Our next result concerns the canonical embedding of the Bertin-M\'ezard family:
\begin{theorem}\label{relative-canonical-embedding}
Diagram (\ref{classic-diagram}) induces a deformation-theoretic diagram of canonical embeddings
\begin{equation} \label{gen-diagram}
\xymatrix{
	0 \ar[r] & I_{\mathcal{X}_\eta}\ar@{^{(}->}[r] & S_L:=L[\omega_1,\ldots,\omega_g] \ar@{->>}[r]^-{\phi_\eta} & 
	\displaystyle\bigoplus_{n=0}^\infty H^0(\mathcal{X}_\eta,\Omega_{\mathcal{X}_\eta/L}^{\otimes n}) \ar[r] & 0  
	\\
	0 \ar[r] &
	 I_{\mathcal{X}}\ar@{^{(}->}[r] \ar@{^{(}->}[u]_{\otimes_R L}   \ar@{->>}[d]^{\otimes_R R/\mathfrak{m}}
	 & 
	S_R:=R[W_1,\ldots,W_g] \ar@{->>}[r]^-{\phi} \ar@{^{(}->}[u]_{\otimes_R L}  \ar@{->>}[d]^{\otimes_R R/\mathfrak{m}}
	& 
	\displaystyle\bigoplus_{n=0}^\infty
	H^0(\mathcal{X},\Omega_{\mathcal{X}/R}^{\otimes n}) \ar[r] \ar@{^{(}->}[u]_{\otimes_R L}  \ar@{->>}[d]^{\otimes_R R/\mathfrak{m}}
	& 0 
	\\
	0 \ar[r] & I_{\mathcal{X}_0}\ar@{^{(}->}[r] & S_k:=k[w_1,\ldots,w_g] \ar@{->>}[r]^-{\phi_0} &
	\displaystyle\bigoplus_{n=0}^\infty H^0(\mathcal{X}_0,\Omega_{\mathcal{X}_0/k}^{\otimes n}) \ar[r] & 0 
}
\end{equation}
where $I_\mathcal{X_\eta}:=\ker\phi_\eta,\;I_{\mathcal{X}}:=\ker\phi,\;I_{\mathcal{X}_0}=\ker\phi_0$, each row is exact and each square is commutative.
\end{theorem}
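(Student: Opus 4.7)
The plan is to build the diagram in three stages: construct the three rows as exact sequences, verify commutativity of each square, and identify the type (mono/epi) of the vertical arrows.

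For the top and bottom rows, we invoke Petri's Theorem (Theorem \ref{Petri1}) applied to the non-singular curves $\mathcal{X}_\eta/L$ and $\mathcal{X}_0/k$, which share the same genus $g$ since $\mathcal{X}/R$ is flat. Part (1) of that theorem gives surjectivity of $\phi_\eta$ and $\phi_0$; the ideals $I_{\mathcal{X}_\eta}$ and $I_{\mathcal{X}_0}$ are the kernels by definition, so the two outer rows are exact.

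For the middle row, the key input is cohomology and base change. Since $\mathcal{X}/R$ is a smooth projective family of curves and $h^0(\Omega_{\mathcal{X}_s/\kappa(s)}^{\otimes n})=d_{n,g}$ is independent of the fibre $s$, Grauert's theorem together with the cited Hartshorne Lemma II.8.9 implies that for every $n\geq 1$ the formation of $H^0(\mathcal{X},\Omega_{\mathcal{X}/R}^{\otimes n})$ commutes with arbitrary base change on $R$. In particular there are canonical isomorphisms
\begin{align*}
H^0(\mathcal{X},\Omega_{\mathcal{X}/R}^{\otimes n})\otimes_R L &\cong H^0(\mathcal{X}_\eta,\Omega_{\mathcal{X}_\eta/L}^{\otimes n}), \\
H^0(\mathcal{X},\Omega_{\mathcal{X}/R}^{\otimes n})\otimes_R k &\cong H^0(\mathcal{X}_0,\Omega_{\mathcal{X}_0/k}^{\otimes n}).
\end{align*}
The canonical map $\phi$ is defined by extending $W_i\mapsto W_i$ multiplicatively via the identification $S_R\cong \mathrm{Sym}(H^0(\mathcal{X},\Omega_{\mathcal{X}/R}))$. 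By construction the base-change isomorphisms above intertwine $\phi$ with $\phi_\eta$ and $\phi_0$, so the right-hand squares commute. Surjectivity of $\phi$ then follows degree-wise from Nakayama's lemma: each graded piece of $\mathrm{coker}(\phi)$ is a finitely generated $R-$module that vanishes after reduction mod $\mathfrak{m}$ by surjectivity of $\phi_0$, hence is itself zero.

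For the vertical arrows on the left, flatness of $L$ over $R$ yields $I_\mathcal{X}\otimes_R L\cong \ker\phi_\eta = I_{\mathcal{X}_\eta}$, so the upper-left map is injective (in fact an isomorphism after localizing). For the lower-left map, tensor the middle exact sequence with $k$ and apply the snake lemma; freeness of each $H^0(\mathcal{X},\Omega_{\mathcal{X}/R}^{\otimes n})$ as an $R-$module kills the relevant $\mathrm{Tor}_1^R$, so right-exactness is preserved and yields a surjection $I_\mathcal{X}\otimes_R k\twoheadrightarrow I_{\mathcal{X}_0}$. Commutativity of the remaining squares is then automatic from the construction of the arrows. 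The main subtlety throughout is the constancy of the canonical ring under base change on $R$, which is what lets us glue the fibrewise Petri data into a coherent relative statement.
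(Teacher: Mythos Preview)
Your argument is correct and follows the same overall strategy as the paper: Petri's theorem handles the two outer rows, and surjectivity of $\phi$ is deduced from surjectivity of $\phi_0$ on the special fibre. The paper carries out this last step as a short diagram chase (lift a preimage from the bottom row), which is really your Nakayama-on-the-cokernel argument in disguise; your formulation is the cleaner of the two. You also go further than the paper by explicitly invoking Grauert/base change to justify the right-column isomorphisms and by analysing the induced maps on kernels via flatness of $L$ and $\mathrm{Tor}_1$-vanishing for the reduction; the paper's proof leaves all of this implicit and only writes out the surjectivity of $\phi$.
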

\begin{proof}
Exactness of the top and bottom row of diagram (\ref{gen-diagram}) are due to Theorem \ref{Petri}, the classical result of Enriques, Petri and M. Noether. For the middle row,
%the family $\mathcal{X}\rightarrow \Spe R$ is smooth, since smoothness needs to be checked on closed points (see, \cite[def. 3.35 p. 142]{LiuBook}) and the special fibre of $\mathcal{X}_0$ is a smooth curve. Therefore, by \cite[cor. 2.6 p. 222]{LiuBook} the relative sheaf of differentials is locally free and in particular all higher powers $\Omega_{\mathcal{X}/R}^{\otimes n}$ are locally free sheaves. Localy free sheaves over affine schemes give rise to projective modules, see \cite[\href{http://stacks.math.columbia.edu/tag/tag/00NV}{Tag 00NV}]{stacks-project} and projective modules over local rings are free \cite[\href{http://stacks.math.columbia.edu/tag/tag/058Z}{Tag 058Z}]{stacks-project}, giving that for all $n$, $H^0(\mathcal{X},\Omega^{\otimes n}_{\mathcal{X}/R})$ is a free $R$-module.\\
we select generators $f_1dx,\ldots,f_gdx$ for $H^0(\mathcal{X},\Omega_{\mathcal{X}/R})$ and note that the assignment $W_i\mapsto f_i dx$ gives rise to a homogeneous homomorphism of graded rings
\[
\xymatrix{
\phi:R[W_1,\ldots,W_g] \ar[r]^-
{  \phi } 
& \displaystyle \bigoplus_{n=0}^\infty
	H^0(\mathcal{X},\Omega_{\mathcal{X}/R}^{\otimes n}).
	}
\]
We prove surjectivity of $\phi$ by diagram chasing: let $r\in \displaystyle\bigoplus_{n=0}^\infty H^0(\mathcal{X},\Omega_{\mathcal{X}/R}^{\otimes n})$ and write $\overline{r}=r\otimes_R 1_{R/\mathfrak{m}}\in\displaystyle\bigoplus_{n=0}^\infty H^0(\mathcal{X_0},\Omega_{\mathcal{X_0}/k}^{\otimes n})$. Since $\phi_0$ is onto, there exists $\overline{s}\in \mathrm{Sym}(H^0(\mathcal{X}_0,\Omega_{\mathcal{X}_0/k}))$ with $\phi_0(\overline{s})=\overline{r}$. Similarly, since $\mathrm{Sym}(H^0(\mathcal{X},\Omega_{\mathcal{X}/R}))\rightarrow \mathrm{Sym}(H^0(\mathcal{X}_0,\Omega_{\mathcal{X}_0/k}))$ is onto, there exists $s\in \mathrm{Sym}(H^0(\mathcal{X},\Omega_{\mathcal{X}/R}))$ with $s\otimes_R 1_{R/\mathfrak{m}}=\overline{s}$. By construction, $\phi(s)=r$, proving that $\phi$ is onto as well.
\end{proof}
We proceed with establishing a Nakayama-type criterion for a subset of the kernel $I_\mathcal{X}$ to generate the relative canonical ideal:
\begin{lemma}\label{GeneralizeSpec}
Let $G$ be a set of homogeneous polynomials in $I_\mathcal{X}$ such that $G\otimes_R L$ generates $I_{\mathcal{X}_\eta}$ and $G \otimes_R k$ generates $I_{\mathcal{X}_0}$. Then:
\begin{enumerate}[(i)]
\item For any $n\in\mathbb{N}$, the $R-$modules $\left(S_R/\langle G\rangle\right)_n$ are free of rank $d_{n,g}$.\\
\item $I_{\mathcal{X}}=\langle G \rangle$.
\end{enumerate}
\end{lemma}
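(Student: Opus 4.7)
My strategy is to use the local structure of $R$, with residue field $k$ and fraction field $L$, so that each graded piece $(S_R/\langle G\rangle)_n$ is controlled by its two fibre reductions. I first establish part (i) by a Nakayama-plus-flatness argument, then deduce (ii) by comparing free $R$-modules of the same rank.

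For part (i), fix $n$ and set $M:=(S_R/\langle G\rangle)_n$, which is finitely generated over $R$ because $(S_R)_n$ is. Right-exactness of $-\otimes_R k$ combined with the hypothesis $\langle G\otimes_R k\rangle=I_{\mathcal{X}_0}$ gives $M\otimes_R k\cong (S_k/I_{\mathcal{X}_0})_n\cong H^0(\mathcal{X}_0,\Omega_{\mathcal{X}_0/k}^{\otimes n})$, a $k$-vector space of dimension $d_{n,g}$. Lifting a $k$-basis of $M\otimes_R k$ to elements of $M$, Nakayama's lemma for the local ring $R$ produces a surjection $\psi\colon R^{d_{n,g}}\twoheadrightarrow M$. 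Applying $-\otimes_R L$, which is exact by flatness of $L$ over $R$, yields a surjection $L^{d_{n,g}}\twoheadrightarrow M\otimes_R L\cong (S_L/I_{\mathcal{X}_\eta})_n\cong H^0(\mathcal{X}_\eta,\Omega_{\mathcal{X}_\eta/L}^{\otimes n})$, whose target again has $L$-dimension $d_{n,g}$. A surjection between $L$-vector spaces of equal finite dimension is an isomorphism, so $\ker(\psi)\otimes_R L=0$. Since $\ker(\psi)$ is a submodule of the torsion-free $R$-module $R^{d_{n,g}}$ and $R$ is a domain, $\ker(\psi)$ is itself torsion-free; together with $\ker(\psi)\otimes_R L=0$ this forces $\ker(\psi)=0$, so $M\cong R^{d_{n,g}}$.

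For part (ii), the $R$-module $H^0(\mathcal{X},\Omega_{\mathcal{X}/R}^{\otimes n})$ is free of rank $d_{n,g}$ by \cite[Lemma II.8.9]{MR0463157}, so the exact sequence $0\to (I_\mathcal{X})_n\to (S_R)_n\to (S_R/I_\mathcal{X})_n\to 0$ splits and $(S_R/I_\mathcal{X})_n$ is free of rank $d_{n,g}$. By part (i), so is $(S_R/\langle G\rangle)_n$, and the containment $\langle G\rangle\subseteq I_\mathcal{X}$ induces a surjection $(S_R/\langle G\rangle)_n\twoheadrightarrow (S_R/I_\mathcal{X})_n$ between free $R$-modules of equal rank. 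After choosing bases this becomes a surjective endomorphism of $R^{d_{n,g}}$, whose kernel is a direct summand of $R^{d_{n,g}}$, hence projective and therefore free over the local ring $R$; invariant basis number then forces this kernel to have rank zero, so it vanishes. Thus $\langle G\rangle_n=(I_\mathcal{X})_n$ for every $n$, whence $\langle G\rangle=I_\mathcal{X}$.

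The main obstacle is the freeness claim in (i): counting dimensions over $k$ alone only yields a generating set of the right size, and to rule out extra relations one must combine flatness of $L$ over $R$ with the torsion-freeness of submodules of $R^{d_{n,g}}$. Once (i) is secured, part (ii) is a formal rank comparison.
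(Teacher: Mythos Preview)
Your proof is correct and, for part (i), essentially identical to the paper's: the paper simply cites Hartshorne's Lemma II.8.9 after checking that both fibre dimensions equal $d_{n,g}$, and your Nakayama-plus-torsion-free-kernel argument is precisely the standard proof of that lemma.

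For part (ii) the two arguments diverge slightly. The paper takes $s\in I_{\mathcal{X}}$, uses the generic-fibre hypothesis to write $s\otimes 1_L=\sum g_i s_i\otimes 1_L$ with $g_i\in G$, clears denominators to get $ds\in\langle G\rangle$ for some nonzero $d\in R$, and concludes that the image of $s$ in $S_R/\langle G\rangle$ is torsion, hence zero by (i). Your route instead invokes the additional fact (from Theorem \ref{relative-canonical-embedding} and the freeness of $H^0(\mathcal{X},\Omega_{\mathcal{X}/R}^{\otimes n})$) that $(S_R/I_{\mathcal{X}})_n$ is itself free of rank $d_{n,g}$, and then argues that a surjection of free $R$-modules of equal finite rank over a local ring is an isomorphism. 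The paper's argument is a bit more economical in that it does not need this extra freeness input, while yours is perhaps more structurally symmetric; both are short and entirely valid. (Minor remark: the splitting of $0\to (I_{\mathcal X})_n\to (S_R)_n\to (S_R/I_{\mathcal X})_n\to 0$ is not actually needed in your argument---you only use that the quotient is free of rank $d_{n,g}$.)
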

\begin{proof}
For $(i)$: Let $n\in\mathbb{N}$. Since by assumption $G\otimes_R L$ and $G \otimes_R k$ generate $I_{\mathcal{X}_\eta}$ and $I_{\mathcal{X}_0}$ respectively, we have that
\[
\left(S_R/\langle G\rangle\right)_n\otimes_R L \cong \big(S_L/I_{\mathcal{X}_\eta}\big)_n\text{ and } \left(S_R/\langle G\rangle\right)_n\otimes_R k \cong \big(S_k/I_{\mathcal{X}_0}\big)_n.
\]
By Petri's Theorem \ref{Petri} we get that
\[
\big(S_L/I_{\mathcal{X}_\eta}\big)_n\cong H^0(\mathcal{X}_{\eta},\Omega^{\otimes n}_{\mathcal{X}_{\eta}/L})
\text{ and }
\big(S_k/I_{\mathcal{X}_0}\big)_n\cong H^0(\mathcal{X}_{0},\Omega^{\otimes n}_{\mathcal{X}_{0}/k})
\]
and by eq. (\ref{dng}) 
\[
\dim_L H^0(\mathcal{X}_{\eta},\Omega^{\otimes n}_{\mathcal{X}_{\eta}/L})=
\dim_k H^0(\mathcal{X}_{0},\Omega^{\otimes n}_{\mathcal{X}_{0}/k})= d_{n,g}.
\]
The result follows from \cite[lemma II.8.9]{}.

For $(ii)$: let $s\in I_{\mathcal{X}}$ and assume for contradiction that $s\notin \langle G\rangle$. Since $s\otimes 1_L \in I_{\mathcal{X}_\eta}$ and $G\otimes_R L$ generates $I_{\mathcal{X}_\eta}$, there exist $g_i\in G$ and $s_i\in S_L$ such that $s\otimes 1_L=\sum g_i s_i\otimes 1_L$. Choosing $d\in R$ to be the gcd of the denominators of the coefficients of the $s_i$, we may clear denominators to obtain $ds\otimes 1_L=\sum g_i ds_i\otimes 1_L$, with $ds_i\in S_R$ or equivalently $ds=\sum g_ids_i$ with $ds_i\in S_R$, implying that $ds\in \langle G \rangle $. If $s\notin \langle G\rangle$, then $s$ is a torsion element of $S_R/\langle G\rangle$, with its homogeneous components being torsion elements of the free $R-$modules $\left(S_R/\langle G\rangle\right)_n$ for some $n\in\mathbb{N}$. By (i), the latter are free $R-$modules, so we conclude that if $s\notin \langle G \rangle $ then $s$ must be zero, completing the proof.
\end{proof}
Lemma \ref{GeneralizeSpec} reduces the problem of determining the generating set of the relative canonical ideal to determining compatible generating sets for the canonical ideals of the two fibers. Thus, in the next section  we study the canonical embeddings of the two fibers, while compatibility is studied in section  \ref{sec:reduction}.

\section{The Canonical Embedding of the Two Fibers}
\label{two-fibers}
The family's generic fibre, given by $\mathcal{X}_{\eta}:y^p=\lambda^p x^\ell +a(x)^p$, for $y=a(x)(\lambda X+1)$, is a cyclic ramified covering of the projective line and, by assumption, the order of the cyclic group is prime to the characteristic $p$. Boseck in \cite{Boseck} gives an explicit description of a basis for the global sections of holomorphic differentials of such covers. Following the notation of \cite{KaranProc}, Boseck's basis $\mathbf{b}$ for $H^0(\mathcal{X}_\eta,\Omega_{\mathcal{X}_\eta})$ is given by
\begin{eqnarray}\label{def-Boseck}
\mathbf{b}=
\left\{
x^Ny^{-\mu}dx:\lf\frac{\mu \ell}{p}\rf\leq N\leq \mu q-2,\;1\leq\mu\leq p-1
\right\}.
\end{eqnarray}
Using this analysis, the authors of \cite{KaranProc} found an explicit basis for the global sections of holomorphic differentials on the special fibre, compatible to $\mathbf{b}$ in the sense of Lemma  \ref{GeneralizeSpec}. The basis $\overline{\mathbf{c}}$ for $H^0(\mathcal{X}_0,\Omega_{\mathcal{X}_0})$ is given by
\begin{equation}\label{def-Karan}
\overline{\mathbf{c}}=\left\{
x^N a(x)^{p-1-\mu} X^{p-1-\mu} dx:
\lf \frac{\mu\ell}{p} \rf\leq N\leq \mu q-2,\;1\leq\mu\leq p-1
\right\}.
\end{equation}
The bases $\mathbf{b}$ and $\overline{\mathbf{c}}$ are both determined by the values of $(N,\mu)$, so we proceed with the study of the respective index set.
\subsection{The index set $A$ and the corresponding multidegrees}
Let
\begin{eqnarray}\label{def-A}
A=
\left\{
(N,\mu):\lf\frac{\mu \ell}{p}\rf\leq N\leq \mu q-2,\;1\leq\mu\leq p-1
\right\}\subseteq\mathbb{N}^2.
\end{eqnarray}
and note that by \cite[eq. (34) p. 48]{Boseck}
\begin{equation}\label{genus1}
|A|=\sum_{\mu=1}^{p-1}\left(\mu q-\lf\frac{\mu \ell}{p}\rf-1\right)=g.
\end{equation}
If $\{z_{N,\mu}:(N,\mu)\in A\}$ is a set of variables indexed by $A$, to each variable $z_{N,\mu}$ we assign the multidegree $\mdeg(z_{N,\mu})=(1,N,\mu)\in\mathbb{N}^3$. Thus, if $S=F[\{z_{N,\mu}\}]$ is the polynomial ring over $F$, by assigning the multidegree $(0,0,0)$ to the elements of $F$, we get a multigrading on $S$ via
\begin{equation}\label{def-mdeg}
\mdeg(z_{N_1,\mu_1}z_{N_2,\mu_2}\cdots z_{N_d,\mu_d})=(d,N_1+N_2+\cdots N_d,\mu_1+\mu_2+\cdots+\mu_d ).
\end{equation}
We will refer to the first coordinate of the multidegree (\ref{def-mdeg}) as the \emph{standard degree}.
\\

Next, we consider the two polynomial rings $L[\{\omega_{N,\mu}\}]$ and $k[\{w_{N,\mu}\}]$ with variables indexed by the points $(N,\mu)\in A$. The results of this subsection apply to both fibers, so we introduce the following notation: We will write $X$ to refer to either curve $\mathcal{X}_\eta$ or $\mathcal{X}_0$, $F$ to refer to either field $L$ or $k$, $\{z_{N,\mu}\}$ to refer to either set of variables $\{\omega_{N,\mu}\}$ or $\{w_{N,\mu}\}$, $S:=F[\{z_{N,\mu}\}]$ to refer to either polynomial ring $L[\{\omega_{N,\mu}\}]$ or $k[\{w_{N,\mu}\}]$ and $f_{N,\mu}dx$ to refer to the basis elements of either $\mathbf{b}$ or $\overline{\mathbf{c}}$. Note that the multiplication in the canonical ring in particular implies that for any two 1-differentials $f_{N,\mu}dx,\;f_{N',\mu'}dx$ we have $f_{N,\mu}dx\cdot f_{N',\mu'}dx=f_{N+N',\mu+\mu'}dx^{\otimes 2}$.\\

\begin{definition}\label{term-order}
Let $\prec_t$ be the lexicographic order on the variables $\left\{z_{N,\mu}:(N,\mu)\in A \right\}$. We define a new term order $\prec$ on the monomials of $S$ as follows:
\begin{equation}\label{term-order1}
z_{N_1,\mu_1}z_{N_2,\mu_2}\cdots z_{N_d,\mu_d}\prec z_{N'_1,\mu'_1}z_{N'_2,\mu'_2}\cdots z_{N'_s,\mu'_s}\text{ if and only if}
\end{equation}
\begin{enumerate}[(i)]
\item $d<s$ or\\
\item $d=s$ and $\sum \mu_i >\sum \mu'_i$ or\\
\item $d=s$ and $\sum \mu_i =\sum \mu'_i$  and $\sum N_i <\sum N'_i$\\
\item $d=s$ and $\sum \mu_i =\sum \mu'_i$  and $\sum N_i =\sum N'_i$ and
\[z_{N_1,\mu_1}z_{N_2,\mu_2}\cdots z_{N_d,\mu_d}\prec_t z_{N'_1,\mu'_1}z_{N'_2,\mu'_2}\cdots z_{N'_s,\mu'_s}.\]
\end{enumerate}
\end{definition}
\subsection{The binomial part of the canonical ideal}
For each $n\in\mathbb{N}$ we write $\mathbb{T}^n$ for the set of monomials of degree $n$ in $S$ and observe that the binomials below are contained in $I_X$.
\begin{proposition}\label{binomials-all}
Let $z_{N_1,\mu_1}z_{N'_1,\mu'_1},\;z_{N_2,\mu_2}z_{N'_2,\mu'_2}\in \mathbb{T}^2$ be such that $\mdeg(z_{N_1,\mu_1}z_{N'_1,\mu'_1})=\mdeg(z_{N_2,\mu_2}z_{N'_2,\mu'_2})$. Then $z_{N_1,\mu_1}z_{N'_1,\mu'_1}-z_{N_2,\mu_2}z_{N'_2,\mu'_2}\in I_X$.
\end{proposition}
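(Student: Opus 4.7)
The plan is to show directly that $\phi$ sends the two degree-two monomials to the same element of the canonical ring, so their difference lies in $I_X = \ker\phi$. Since $\phi$ is a ring homomorphism from $S$ to $\bigoplus_n H^0(X,\Omega_{X/F}^{\otimes n})$ sending $z_{N,\mu} \mapsto f_{N,\mu}dx$, we have
\[
\phi(z_{N,\mu}z_{N',\mu'}) = (f_{N,\mu}dx)\cdot(f_{N',\mu'}dx) = f_{N+N',\mu+\mu'}\,dx^{\otimes 2},
\]
using the multiplication rule for differentials recalled just before Definition \ref{term-order}.

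The crucial observation is that this product depends on the index pairs $(N,\mu)$ and $(N',\mu')$ only through their sums $N+N'$ and $\mu+\mu'$. On the generic fibre, where $f_{N,\mu} = x^N y^{-\mu}$, we have
\[
(x^N y^{-\mu}dx)(x^{N'}y^{-\mu'}dx) = x^{N+N'} y^{-(\mu+\mu')}\,dx^{\otimes 2},
\]
while on the special fibre, where $f_{N,\mu} = x^N a(x)^{p-1-\mu} X^{p-1-\mu}$, the product equals
\[
x^{N+N'}\bigl(a(x)X\bigr)^{2(p-1)-\mu-\mu'}\,dx^{\otimes 2}.
\]
In both cases, the result is visibly determined by the exponent totals $N+N'$ and $\mu+\mu'$.

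The hypothesis $\mdeg(z_{N_1,\mu_1}z_{N'_1,\mu'_1}) = \mdeg(z_{N_2,\mu_2}z_{N'_2,\mu'_2})$ together with the definition (\ref{def-mdeg}) gives precisely
\[
N_1+N'_1 = N_2+N'_2 \quad\text{and}\quad \mu_1+\mu'_1 = \mu_2+\mu'_2.
\]
Substituting into the computations above yields $\phi(z_{N_1,\mu_1}z_{N'_1,\mu'_1}) = \phi(z_{N_2,\mu_2}z_{N'_2,\mu'_2})$, hence the binomial lies in $\ker\phi = I_X$.

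There is no real obstacle here: the statement is essentially a restatement of the fact that the canonical map respects the multigrading, and the only substantive input is the explicit form of the bases $\mathbf{b}$ and $\overline{\mathbf{c}}$ together with the multiplication rule for polydifferentials.
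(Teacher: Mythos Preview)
Your proof is correct and follows essentially the same approach as the paper: both use the multiplication rule $f_{N,\mu}dx\cdot f_{N',\mu'}dx = f_{N+N',\mu+\mu'}\,dx^{\otimes 2}$ together with the equality of sums coming from the multidegree hypothesis to conclude that $\phi$ kills the binomial. Your explicit computations on each fibre are a harmless elaboration but not needed, since the paper has already recorded the multiplication rule in the unified notation just before Definition~\ref{term-order}.
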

\begin{proof}
Since $\mdeg(z_{N_1,\mu_1}z_{N'_1,\mu'_1})=\mdeg(z_{N_2,\mu_2}z_{N'_2,\mu'_2})$, we have that $N_1+N'_1=N_2+N'_2$ and $\mu_1+\mu'_1=\mu_2+\mu'_2$, so
\[
\phi(z_{N_1,\mu_1}z_{N'_1,\mu'_1}-z_{N_2,\mu_2}z_{N'_2,\mu'_2})
=
f_{N_1+N'_1,\;\mu_1+\mu'_1}dx^{\otimes 2}-f_{N_2+N'_2,\;\mu_2+\mu'_2}dx^{\otimes 2}=0.
\]
\end{proof}
We collect the binomials of Proposition \ref{binomials-all} in the set below.
\begin{definition} \label{binom-gens-all}
Let
\begin{multline*}
G_1=\{
z_{N_1,\mu_1}z_{N'_1,\mu'_1}-z_{N_2,\mu_2}z_{N'_2,\mu'_2}\in S\;:
\;
z_{N_1,\mu_1}z_{N'_1,\mu'_1},z_{N_2,\mu_2}z_{N'_2,\mu'_2}\in \mathbb{T}^2\\
\text{ and }
\mdeg(z_{N_1,\mu_1}z_{N'_1,\mu'_1})=\mdeg(z_{N_2,\mu_2}z_{N'_2,\mu'_2})
\}.
\end{multline*}
\end{definition}
Next, we consider the Minkowski sum of $A$ with itself, defined as
\begin{equation*}
A+A=\{(N+N',\mu+\mu')\;:\;(N,\mu),(N',\mu')\in A\}\subseteq\mathbb{Z}^2
\end{equation*}
and note the following correspondence between points of $A+A$ and monomials in $\mathbb{T}^2$:
\begin{corollary}\label{mdeg-characterization}
\[(\rho,T)\in A+A\Leftrightarrow\exists \;z_{N,\mu}z_{N',\mu'}\in \mathbb{T}^2 \text{ such that } \mdeg(z_{N,\mu}z_{N',\mu'})=(2,\rho,T).\]
\end{corollary}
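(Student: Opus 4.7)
The statement is essentially a translation between two ways of packaging the same data — points of the Minkowski sum $A+A$ on the one hand, and multidegrees of degree-$2$ monomials on the other — so my plan is simply to unwind the relevant definitions in each direction.

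For the forward implication, I would start from a pair $(\rho,T)\in A+A$. By the definition of Minkowski sum, there exist $(N,\mu),(N',\mu')\in A$ with $N+N'=\rho$ and $\mu+\mu'=T$. Since the indexing set of the variables of $S=F[\{z_{N,\mu}\}]$ is precisely $A$, the symbols $z_{N,\mu}$ and $z_{N',\mu'}$ are honest variables of $S$, and therefore their product lies in $\mathbb{T}^2$. Applying the multidegree formula (\ref{def-mdeg}) immediately gives
\[\mdeg(z_{N,\mu}z_{N',\mu'})=(2,\,N+N',\,\mu+\mu')=(2,\rho,T),\]
which is the desired monomial.

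For the converse, I would begin with a monomial $z_{N,\mu}z_{N',\mu'}\in\mathbb{T}^2$ satisfying $\mdeg(z_{N,\mu}z_{N',\mu'})=(2,\rho,T)$. The membership in $\mathbb{T}^2$ forces both $z_{N,\mu}$ and $z_{N',\mu'}$ to be variables of $S$, so by construction $(N,\mu),(N',\mu')\in A$. Reading off the last two coordinates of the multidegree via (\ref{def-mdeg}) gives $N+N'=\rho$ and $\mu+\mu'=T$, whence $(\rho,T)=(N+N',\mu+\mu')\in A+A$, completing the proof.

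There is no real obstacle here — the statement is a tautological reformulation whose only content is that the indexing of the variables matches the set $A$ and that the second and third components of $\mdeg$ are additive. The value of this corollary lies not in its proof but in the notational bridge it establishes, which will be exploited in Proposition \ref{corresp-binoms-Mink} to analyze the binomial part of the canonical ideal via lattice-point combinatorics on $A+A$.
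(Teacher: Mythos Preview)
Your proof is correct and follows essentially the same approach as the paper's own proof: both simply unwind the definitions, with the paper compressing the argument to the single observation that $(N,\mu)\in A$ if and only if there is a variable $z_{N,\mu}$ of multidegree $(1,N,\mu)$, from which the degree-$2$ statement follows immediately. Your version is more explicit about the two directions, but the content is identical.
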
 
\begin{proof}
Follows directly from the definition of $A$ given in eq. (\ref{def-A}), since
\[(N,\mu)\in A\Leftrightarrow \exists\;z_{N,\mu}\in F[\{z_{N,\mu}\}] \text{ such that }\mdeg(z_{N,\mu})=(1,N,\mu).\]
\end{proof}
The correspondence of Corollary \ref{mdeg-characterization} is not one-to-one: for any $(\rho,T)\in A+A$, we set
\begin{equation*}
B_{\rho,T}:=\{z_{N,\mu}z_{N',\mu'}\in \mathbb{T}^2\;:\; (\rho,T)=(N+N',\mu+\mu')\}
\end{equation*}
and observe that the differences of elements of $B_{\rho,T}$ are in $G_1$. Next, we define the map of sets:
\begin{definition}\label{sigma}
\begin{eqnarray*}
\sigma:A+A&\rightarrow& \mathbb{T}^2\\
(\rho,T)&\mapsto& \min_{\prec}B_{\rho,T}\nonumber
\end{eqnarray*}
\end{definition}
We will use the $\sigma$ to show that $A+A$ is in bijection with a standard basis of $\left(S / \langle \init(G_1)\rangle \right)_2$:
\begin{proposition}\label{corresp-binoms-Mink} 
$
|A+A|=\dim_F \left(S / \langle \init(G_1)\rangle \right)_2
$
\end{proposition}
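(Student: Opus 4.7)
The plan is to exhibit a monomial basis of the quotient $(S/\langle \init(G_1)\rangle)_2$ and put it in bijection with $A+A$ via the map $\sigma$ of Definition \ref{sigma}.

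First I would pin down exactly which monomials in $S_2$ lie in $\langle \init(G_1)\rangle$. Every element of $G_1$ is already homogeneous of standard degree two, so $\langle \init(G_1)\rangle_2$ is the $F$-subspace of $S_2$ spanned by the set of leading monomials $\{\init(g): g \in G_1\}$. Each generator $g = z_{N_1,\mu_1}z_{N'_1,\mu'_1} - z_{N_2,\mu_2}z_{N'_2,\mu'_2}$ has both terms inside a common set $B_{\rho,T}$, since equality of multidegrees is precisely the defining condition of $G_1$. Conversely, for any monomial $m \in B_{\rho,T}$ distinct from the $\prec$-minimum $\sigma(\rho,T)$, the binomial $m - \sigma(\rho,T)$ is an element of $G_1$ whose leading term under $\prec$ is $m$. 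Hence the set of monomials in $\init(G_1)_2$ is exactly
\[
\bigcup_{(\rho,T)\in A+A} \bigl(B_{\rho,T}\setminus\{\sigma(\rho,T)\}\bigr).
\]

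Next I would use the partition of $\mathbb{T}^2$ to read off a basis. Every monomial in $\mathbb{T}^2$ has a unique multidegree $(2,\rho,T)$ and, by Corollary \ref{mdeg-characterization}, $(\rho,T)\in A+A$; hence the sets $\{B_{\rho,T}\}_{(\rho,T)\in A+A}$ partition $\mathbb{T}^2$. Combining with the previous paragraph, the complement of $\init(G_1)_2$ in $\mathbb{T}^2$ is precisely the image $\sigma(A+A)$. Since $\langle \init(G_1)\rangle_2$ is a monomial subspace of $S_2$, the complementary monomials form an $F$-basis of the quotient: $(S/\langle \init(G_1)\rangle)_2$ has basis $\{\sigma(\rho,T) : (\rho,T)\in A+A\}$.

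Finally, injectivity of $\sigma$ is immediate from the same partition: distinct pairs $(\rho,T)$ produce disjoint sets $B_{\rho,T}$, so their $\prec$-minima are distinct. Therefore
\[
\dim_F (S/\langle \init(G_1)\rangle)_2 = |\sigma(A+A)| = |A+A|,
\]
as claimed. There is no serious obstacle to this argument; the only point requiring care is the observation that, because $G_1$ consists of degree-two elements, the degree-two part of $\langle \init(G_1)\rangle$ is literally the $F$-span of the leading monomials of the generators, with no contributions coming from initial terms of higher-degree syzygies.
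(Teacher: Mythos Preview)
Your proof is correct and follows essentially the same approach as the paper: you show that $\sigma$ is well-defined and injective, that $\sigma(A+A)=\mathbb{T}^2\setminus\init(G_1)$, and that the complementary monomials give a basis for the degree-two quotient. The paper's argument is more terse but identical in substance; your version simply unpacks the claim $\sigma(A+A)=\mathbb{T}^2\setminus\init(G_1)$ more carefully by explicitly describing the leading-term set as $\bigcup_{(\rho,T)}\bigl(B_{\rho,T}\setminus\{\sigma(\rho,T)\}\bigr)$.
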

\begin{proof}
Let $(\rho,T)\in A+A$. By Corollary \ref{mdeg-characterization}, $B_{\rho,T}$ is non-empty and, since $\prec$ is a total order, it has a unique minimal element. Hence, the map $\sigma$ is well-defined, $1-1$ and it is immediate that $\sigma(A+A)=\mathbb{T}^2\setminus\init(G_1)$. Since $\langle \init (G_1) \rangle $ is a monomial ideal generated in degree $2$ we remark that
$\dim_F \left(S / \langle \init(G_1)\rangle \right)_2=| \mathbb{T}^2\setminus\init(G_1) |$, completing the proof.
\end{proof}
\subsection{A subset of $A+A$ of cardinality $3(g-1)$.}
We start with the following definition:
\begin{definition}\label{lower-bound}
Let $T\in\mathbb{Z}$ such that $(\rho,T)\in A+A$. We define the quantity
\begin{equation*}
b(T)=
\begin{cases}
\lf \frac{T\ell}{p} \rf,\text{ if }\forall\;\mu,\mu'\geq 1 \text{ with }T=\mu+\mu'\text{ we have}\;\lf\frac{\mu\ell}{p}\rf + \lf\frac{\mu'\ell}{p}\rf =\lf\frac{T\ell}{p}\rf\\\\
\lf \frac{T\ell}{p}\rf -1, \text{ if }\exists\;\mu,\mu'\geq 1 \text{ with }T=\mu+\mu'\text{ and }\;\lf\frac{\mu\ell}{p}\rf + \lf\frac{\mu'\ell}{p}\rf =\lf\frac{T\ell}{p}\rf -1.
\end{cases}
\end{equation*}
\end{definition}
Definition \ref{lower-bound} allows us to give an alternative description of $A+A$:
\begin{lemma}\label{description-Minkowski}
$
A+A=\left\{(\rho,T): 2\leq T\leq  2(p-1), b(T)\leq \rho\leq Tq-4\right\}\subseteq \mathbb{N}^2
$
\end{lemma}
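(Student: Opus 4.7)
The plan is to prove the set equality by establishing both inclusions; both directions reduce to the elementary floor identity $\lf x\rf+\lf y\rf \in \{\lf x+y\rf -1,\;\lf x+y\rf\}$ applied to $x=\mu\ell/p$, $y=\mu'\ell/p$, with the second case (``carry'') occurring precisely when $\{\mu\ell/p\}+\{\mu'\ell/p\}\geq 1$, i.e., when $\mu\ell\bmod p+\mu'\ell\bmod p\geq p$.

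For the forward inclusion, take $(\rho,T)\in A+A$ with witnesses $(N,\mu),(N',\mu')\in A$. Since $\mu,\mu'\in\{1,\ldots,p-1\}$, summing these ranges yields $2\leq T\leq 2(p-1)$. Summing $N\leq \mu q-2$ and $N'\leq \mu'q-2$ immediately gives $\rho\leq Tq-4$, and summing $N\geq\lf\mu\ell/p\rf$ with $N'\geq\lf\mu'\ell/p\rf$ gives
\[
\rho\geq \lf\mu\ell/p\rf+\lf\mu'\ell/p\rf.
\]
A short case analysis against Definition \ref{lower-bound} then shows the right-hand side is $\geq b(T)$: in the first case of the definition every decomposition forces this sum to equal $\lf T\ell/p\rf=b(T)$, and in the second case the floor identity guarantees it is at least $\lf T\ell/p\rf-1=b(T)$.

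For the reverse inclusion, fix $T\in[2,2(p-1)]\cap\mathbb{Z}$ and $\rho$ in the prescribed range. For any fixed decomposition $T=\mu+\mu'$ with $1\leq \mu,\mu'\leq p-1$, the set
\[
\{N+N':(N,\mu),(N',\mu')\in A\}
\]
is the Minkowski sum of two integer intervals, hence it equals the contiguous interval $\bigl[\lf\mu\ell/p\rf+\lf\mu'\ell/p\rf,\;Tq-4\bigr]\cap\mathbb{Z}$. All such intervals (as $(\mu,\mu')$ varies over valid decompositions of $T$) share the same right endpoint $Tq-4$, so their union is $[\rho^*,Tq-4]\cap\mathbb{Z}$ where $\rho^*$ denotes the minimum of $\lf\mu\ell/p\rf+\lf\mu'\ell/p\rf$ over valid decompositions.

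The main obstacle is the identification $\rho^*=b(T)$. By the floor identity, the quantity being minimized takes only the two values $\lf T\ell/p\rf$ and $\lf T\ell/p\rf-1$, so $\rho^*=\lf T\ell/p\rf$ when no valid decomposition admits a carry and $\rho^*=\lf T\ell/p\rf-1$ otherwise. Matching these two alternatives against Definition \ref{lower-bound} (interpreting its hypothesis ``$\mu,\mu'\geq 1$ with $T=\mu+\mu'$'' with the implicit upper bound $\mu,\mu'\leq p-1$ inherited from $A$-membership) yields $\rho^*=b(T)$. The delicate part is verifying that this interpretation is compatible with the paper's phrasing: one checks that if any decomposition $T=\mu+\mu'$ with $\mu,\mu'\geq 1$ produces a carry, then some valid decomposition does as well, which follows from a direct inspection using $r_\mu:=\mu\ell\bmod p$ and the constraint $T\leq 2(p-1)$. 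Once $\rho^*=b(T)$ is established, every $\rho\in[b(T),Tq-4]$ arises as $N+N'$ for some valid witnesses, completing the proof.
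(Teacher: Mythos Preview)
Your main argument is correct and takes essentially the same route as the paper's (very brief) proof: both directions of the inclusion are reduced to the floor identity $\lf x+y\rf-1\leq\lf x\rf+\lf y\rf\leq\lf x+y\rf$, and you supply considerably more detail than the paper does, in particular the clean observation that for fixed $T$ the achievable $\rho$-values form a union of integer intervals with common right endpoint $Tq-4$, whence the union is $[\rho^*,Tq-4]$.

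There is, however, one genuine error in your write-up. The side-claim you label ``the delicate part'' --- that if \emph{any} decomposition $T=\mu+\mu'$ with $\mu,\mu'\geq 1$ produces a carry then some decomposition with $\mu,\mu'\in[1,p-1]$ does as well --- is false. Take $p=7$, $\ell=6$, $T=9$. The residues are $r_\mu=\mu\ell\bmod p=(6,5,4,3,2,1)$ for $\mu=1,\ldots,6$. The unrestricted decomposition $9=8+1$ gives $r_8+r_1=r_1+r_1=12\geq 7$, a carry; but the valid decompositions $9=3+6=4+5$ give $r_3+r_6=4+1=5$ and $r_4+r_5=3+2=5$, neither a carry. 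So the two readings of Definition~\ref{lower-bound} genuinely disagree here: the unrestricted reading would give $b(9)=\lf 54/7\rf-1=6$, while $\rho^*=7=\lf 54/7\rf$.

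This does not damage your proof of the lemma. The quantifiers in Definition~\ref{lower-bound} are meant to range over $\mu,\mu'\in[1,p-1]$ (this is the only reading under which the lemma is even true, and it is the natural one given that $b(T)$ is introduced specifically to describe $A+A$). Under that reading, your identification $\rho^*=b(T)$ is immediate from the dichotomy you already established, and the proof is complete. Simply delete the ``delicate part'' sentence; no reconciliation with a literal unrestricted reading is needed or possible.
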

\begin{proof}
By definition
\[(\rho,T)\in A+A\Leftrightarrow\;\exists\; (N,\mu),(N',\mu')\in A\times A\text{ with }(\rho,T)= (N,\mu)+(N',\mu').\]
Hence, both bounds for $T$ as well as the upper bound of $\rho$ are directly given by the respective bounds for $N$ and $\mu$ in the  description of $A$ given in eq. (\ref{def-A}). The formula for $b(T)$ is deduced by the well-known property of the floor function $\lf x +y\rf -1 \leq \lf x \rf +\lf y\rf \leq \lf x +y\rf$.
\end{proof}
For $0\leq i \leq p$ we let $j_{\min}(i)$ be $0$ if $\ell=1$ and $p-i$ if $\ell\neq 1$, and consider the following subsets of $A+A$:
\begin{definition}\label{definition-Ci}
Let
\begin{multline*}
C(i)=\{
(\rho,T)\in A+A\;:\;(\rho+\ell, T+p )\text{ and }(\rho+j,T+p-i) \in A+A \\
\text{ for } j_{\min}(i)\leq j\leq (p-i)q
\}.
\end{multline*}
\end{definition}
First we study the case for $i=0$:
\begin{lemma}\label{description-C}
$
C(0)=\{(\rho,T)\in A+A\;:\;b(T)\leq \rho\leq Tq-4,\;2\leq T\leq p-2 \}.
$
\end{lemma}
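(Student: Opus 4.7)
My plan is to prove the two inclusions.

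$(\subseteq)$ If $(\rho,T)\in C(0)$, then by definition $(\rho,T)\in A+A$ and $(\rho+\ell,T+p)\in A+A$; applying Lemma \ref{description-Minkowski} to the latter forces $T+p\leq 2(p-1)$, i.e.\ $T\leq p-2$.

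$(\supseteq)$ Let $(\rho,T)\in A+A$ with $T\leq p-2$; the goal is to verify that $(\rho+\ell,T+p)\in A+A$ and that $(\rho+j,T+p)\in A+A$ for every $j\in [j_{\min}(0),pq]$. In every case $T+p\in [p+2,2(p-1)]$, so the $T$-bounds of Lemma \ref{description-Minkowski} are satisfied. The upper $\rho$-bounds reduce, via $\rho\leq Tq-4$, to the trivial inequalities $\ell\leq pq$ and $j\leq pq$. The content of the lemma lies in the two lower bounds
\[b(T+p)\leq \rho+\ell \qquad\text{and}\qquad b(T+p)\leq \rho+j_{\min}(0).\]

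The key auxiliary fact I would prove is $b(T+p)\leq b(T)+\ell$. Since $\ell\in\Z$, one has $\lfloor (T+p)\ell/p\rfloor = \lfloor T\ell/p\rfloor +\ell$, and Definition \ref{lower-bound} therefore forces $b(T+p)\in\{\lfloor T\ell/p\rfloor+\ell-1,\lfloor T\ell/p\rfloor+\ell\}$ and $b(T)\in\{\lfloor T\ell/p\rfloor-1,\lfloor T\ell/p\rfloor\}$. The only case where $b(T+p)-b(T)$ could exceed $\ell$ is $(b(T+p),b(T))=(\lfloor T\ell/p\rfloor+\ell,\lfloor T\ell/p\rfloor-1)$. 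I would rule this out by extracting from $b(T)=\lfloor T\ell/p\rfloor-1$ a decomposition $T=\mu+\mu'$ with $\lfloor \mu\ell/p\rfloor+\lfloor \mu'\ell/p\rfloor=\lfloor T\ell/p\rfloor-1$, lifting it to the decomposition $T+p=\mu+(\mu'+p)$, and applying $\lfloor (\mu'+p)\ell/p\rfloor=\lfloor \mu'\ell/p\rfloor+\ell$ to produce a decomposition of $T+p$ whose floor-sum equals $\lfloor T\ell/p\rfloor+\ell-1$, contradicting the ``no-carry'' condition $b(T+p)=\lfloor T\ell/p\rfloor+\ell$. Once this sublemma is in hand, $b(T+p)\leq \rho+\ell$ follows from $\rho\geq b(T)$, and $b(T+p)\leq \rho+j_{\min}(0)$ follows whenever $j_{\min}(0)\geq \ell$, which holds for $\ell\neq 1$ since then $j_{\min}(0)=p>\ell$. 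The remaining case $\ell=1,\ j_{\min}(0)=0$ demands the sharper bound $b(T+p)\leq \rho$; here I would compute by hand that for $T<p$ every decomposition $T=\mu+\mu'$ has both parts $<p$, so $b(T)=0$, while the decomposition $T+p=(T+1)+(p-1)$ with both parts in $[1,p-1]$ gives floor-sum $0=\lfloor(T+p)/p\rfloor-1$, yielding $b(T+p)=0$ as well.

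The main obstacle is the lifting-of-decompositions argument underlying the auxiliary inequality $b(T+p)\leq b(T)+\ell$; the remainder of the proof is just bookkeeping against Lemma \ref{description-Minkowski}.
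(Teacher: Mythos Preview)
Your proof is correct and follows essentially the same route as the paper's: both reduce the question to the relationship between $b(T+p)$ and $b(T)+\ell$, handled by the same case split on $\ell=1$ versus $\ell>1$ and the same decomposition-lifting idea. The only cosmetic difference is packaging---the paper shows $\max\{b(T),\,b(T+p)-\ell,\,b(T+p)-j_{\min}(0)\}=b(T)$ directly (and records the exact equality $b(T+p)-\ell=b(T)$ for $\ell>1$), whereas you split into two inclusions and only prove the one-sided inequality $b(T+p)\leq b(T)+\ell$, which is all that $(\supseteq)$ requires.
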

\begin{proof}
By definition, for all $j_{\min}(0)\leq j \leq pq$ we have that
\begin{equation*}
(\rho,T)\in C(0)\Leftrightarrow(\rho,T)\in A+A,\;(\rho+\ell,T+p)\in A+A\text{ and }(\rho+j,T+p)\in A+A
\end{equation*}
Using Lemma \ref{description-Minkowski} we rewrite
\begin{equation*}
(\rho,T)\in C(0)\Leftrightarrow 2\leq T \leq p-2\text{ and }\max\{b(T),b(T+p)-\ell,b(T+p)-j_{\min}(0)\}\leq \rho\leq Tq-4.
\end{equation*}
We distinguish the following cases for $\max\{b(T),b(T+p)-\ell,b(T+p)-j_{\min}(0)\}$:
\begin{itemize}
\item If $\ell=1$ then $j_{\min}(0)=0$ and $b(T)=b(T+p)=0$ since $\lf \frac{\mu\ell}{p}\rf=0$ for all $1\leq\mu\leq p-1$. Hence  $\max\{b(T),b(T+p)-\ell,b(T+p)-j_{\min}(0)\}=b(T)$.\\
\item If $\ell >1$ then $j_{\min}(0)=p$, so $b(T+p)-j_{\min}(0)< b(T+p)-\ell$. Choosing an appropriate decomposition $T=\mu+\mu'$ we observe that
\[
b(T)=\lf \frac{T\ell}{p} \rf -1 \Leftrightarrow b(T+p) =\lf \frac{(T+p)\ell}{p}\rf-1.
\]
Finally, since
$
\lf \frac{(T+p)\ell}{p}\rf-\ell=\lf\frac{T\ell}{p}\rf,
$
we deduce that
$
b(T+p)-\ell= b(T)$, so that
\[\max\{b(T),b(T+p)-\ell,b(T+p)-j_{\min}(0)\}=b(T).\]
\end{itemize}
We conclude that in both cases $\max\{b(T),b(T+p)-\ell,b(T+p)-j_{\min}(0)\}=b(T)$, meaning that
\begin{equation*}
(\rho,T)\in C(0) \Leftrightarrow b(T)\leq \rho\leq Tq-4,\;2\leq T\leq p-2.
\end{equation*}
\end{proof}
We are ready to show that $G_1$ does not generate the canonical ideal:
\begin{lemma}\label{final-inequality}
$|(A+A)\setminus C(0)|\leq 3(g-1)$.
\end{lemma}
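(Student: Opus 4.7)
The plan is to use the explicit descriptions from Lemmas~\ref{description-Minkowski} and~\ref{description-C} to reduce the counting inequality to an arithmetic statement about $b(T)$, and then finish by a ``largest fractional part'' argument. First, combining the two lemmas gives $(A+A)\setminus C(0)=\{(\rho,T):p-1\leq T\leq 2(p-1),\ b(T)\leq\rho\leq Tq-4\}$, so
\[
|(A+A)\setminus C(0)|=\sum_{T=p-1}^{2(p-1)}(Tq-3-b(T))=\frac{3p(p-1)q}{2}-3p-\sum_{T=p-1}^{2(p-1)}b(T).
\]
On the other hand, using eq.~(\ref{genus1}) and the identity $\sum_{\mu=1}^{p-1}\lfloor\mu\ell/p\rfloor=(p-1)(\ell-1)/2$ (which follows from $\gcd(\ell,p)=1$, making $\mu\mapsto\mu\ell\bmod p$ a permutation of $\{1,\ldots,p-1\}$), a direct calculation gives $3(g-1)=\frac{3p(p-1)q}{2}-3p-3\sum_{\mu=1}^{p-1}\lfloor\mu\ell/p\rfloor$. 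The target inequality is therefore equivalent to
\[
\sum_{T=p-1}^{2(p-1)}b(T)\geq 3\sum_{\mu=1}^{p-1}\lfloor\mu\ell/p\rfloor.
\]

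To handle this, set $a_\mu:=\lfloor\mu\ell/p\rfloor$ and $\epsilon(T):=\lfloor T\ell/p\rfloor-b(T)\in\{0,1\}$, where the two cases of Definition~\ref{lower-bound} translate to $\epsilon(T)=0$ iff no decomposition $T=\mu+\mu'$ has a ``carry'' $\{\mu\ell/p\}+\{\mu'\ell/p\}\geq 1$. Splitting the sum $\sum_T\lfloor T\ell/p\rfloor$ at $T=p$ and using $\lfloor(p+\nu)\ell/p\rfloor=\ell+a_\nu$ gives
\[
\sum_{T=p-1}^{2(p-1)}\lfloor T\ell/p\rfloor=3\sum_{\mu=1}^{p-1}a_\mu+(p-1),
\]
so the inequality reduces to $\sum_{T=p-1}^{2(p-1)}\epsilon(T)\leq p-1$. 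Since the sum runs over exactly $p$ values of $T$ and each summand lies in $\{0,1\}$, it suffices to exhibit one $T^*\in[p-1,2(p-1)]$ with $\epsilon(T^*)=0$.

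The main obstacle is constructing this $T^*$, and I would resolve it as follows. Because $\gcd(\ell,p)=1$, the map $T\mapsto T\ell\bmod p$ is a bijection on the block $[p-1,2(p-1)]$ of $p$ consecutive integers, so the fractional parts $\{T\ell/p\}$ realize each value in $\{0,1/p,\ldots,(p-1)/p\}$ exactly once. Let $T^*$ be the unique element of the range with $\{T^*\ell/p\}=(p-1)/p$, the largest possible fractional part. For any decomposition $T^*=\mu+\mu'$ with $\mu,\mu'\in\{1,\ldots,p-1\}$, both $\{\mu\ell/p\}$ and $\{\mu'\ell/p\}$ lie in $\{1/p,\ldots,(p-1)/p\}$, so their sum is at most $(2p-2)/p$, strictly less than the ``carry value'' $1+(p-1)/p=(2p-1)/p$; hence the sum must equal $(p-1)/p$, meaning no carry. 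A brief additional check, using the periodicity $\lfloor(p+r)\ell/p\rfloor=\ell+\lfloor r\ell/p\rfloor$, shows that the remaining decompositions with $\mu\geq p$ reduce to decompositions of $T^*-p$, which shares the fractional part $(p-1)/p$, and are therefore carry-free by the same argument. Thus $\epsilon(T^*)=0$, which completes the proof.
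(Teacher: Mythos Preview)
Your argument is correct and follows essentially the same route as the paper: both proofs write
\[
|(A+A)\setminus C(0)|=\sum_{T=p-1}^{2(p-1)}\bigl(Tq-b(T)-3\bigr)
\]
and then bound the sum using $b(T)\geq\lfloor T\ell/p\rfloor-1$ together with the genus formula~(\ref{genus1}) and the identity $(m-1)(p-1)=2g$. The paper's computation yields $|(A+A)\setminus C(0)|\leq 3g-2$ from the crude bound and then passes to $\leq 3g-3$ by asserting that the inequality is strict (the ``$\lneq$'' in eq.~(\ref{count-step4})), without spelling out why. Your introduction of $\epsilon(T)=\lfloor T\ell/p\rfloor-b(T)\in\{0,1\}$ and the ``largest fractional part'' construction of $T^*$ with $\epsilon(T^*)=0$ is precisely the justification that step requires: it shows that not every $T$ in the range has a carrying decomposition, so the uniform bound is not sharp. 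In that sense your proof is the paper's proof with the one tacit step made explicit.

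One minor remark: the ``additional check'' for decompositions with $\mu\geq p$ is unnecessary. In Definition~\ref{lower-bound} the pairs $(\mu,\mu')$ arise from points of $A$ (cf.\ the proof of Lemma~\ref{description-Minkowski}), so $1\leq\mu,\mu'\leq p-1$ throughout, and your carry-free argument for that range already suffices.
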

\begin{proof}
We successively have
\begin{eqnarray}\label{count-step1}
|(A+A)\setminus C(0)|&=& |A+A|-|C(0)|\nonumber \\
&=&
\sum_{T=2 }^{2(p-1)}
\left(
Tq-b(T) -3
\right)
-
\sum_{T=2 }^{p-2}
\left(
Tq-b(T) -3
\right),\text{ by Lemma  \ref{description-Minkowski} and Lemma  \ref{description-C} }\nonumber\\
&=&
\sum_{T=p-1 }^{2(p-1)}
\left(
Tq-b(T) -3
\right)\nonumber\\
&\leq&
\sum_{T=p-1 }^{2(p-1)}
\left(
Tq-\lf \frac{T\ell}{p}\rf  -2
\right)
\text{, since by Def. (\ref{lower-bound}), }b(T)\geq \lf \frac{T\ell}{p}\rf -1
\nonumber\\
&=&
\sum_{T=p+1 }^{2(p-1)}
\left(
Tq-\lf \frac{T\ell}{p}\rf  -2
\right)
+\left((p-1)q-\lf \frac{(p-1)\ell}{p}\rf  -2 \right)+\left(pq-\lf \frac{p\ell}{p}\rf  -2 \right).
\end{eqnarray}
We wish to use the relation
\begin{eqnarray}\label{genus}
\sum_{T=1}^{p-1} 
\left(
T q -\lf \frac{T\ell}{p} \rf -1
\right)=g
\end{eqnarray}
so we change the index in the sum of eq. (\ref{count-step1}) by setting $T'=T-p$:
\begin{eqnarray}\label{count-step2}
\sum_{T=p+1 }^{2(p-1)}
\left(
Tq-\lf \frac{T\ell}{p}\rf  -2
\right)
&=&
\sum_{T'=1 }^{p-2}
\left(
(T'+p)q-\lf \frac{(T'+p)\ell}{p}\rf  -2
\right)
\nonumber\\
&=&
\sum_{T'=1 }^{p-2}
\left(
T'q+pq-\lf \frac{T'\ell}{p}\rf-\ell   -2
\right)
\nonumber \\
&=&
\sum_{T'=1 }^{p-2}
\left(
T'q-\lf \frac{T'\ell}{p}\rf +m  -2
\right)
\text{, since } pq-\ell=m
\nonumber \\
&=&
\sum_{T'=1 }^{p-2}
\left(
T'q-\lf \frac{T'\ell}{p}\rf -1
\right)
+
\sum_{T'=1 }^{p-2} (m-1)
\nonumber \\
&=&
\sum_{T'=1 }^{p-2}
\left(
T'q-\lf \frac{T'\ell}{p}\rf -1
\right)
+
(m-1)(p-2).
\end{eqnarray}
Next, we observe that
\begin{equation}\label{count-step3}
\sum_{T'=1 }^{p-2}
\left(
T'q-\lf \frac{T'\ell}{p}\rf -1
\right)
+
\left((p-1)q-\lf \frac{(p-1)\ell}{p}\rf  -2 \right)
=
\sum_{T'=1 }^{p-1}
\left(
T'q-\lf \frac{T'\ell}{p}\rf -1
\right)
-1.
\end{equation}
Combining relations (\ref{count-step1}), (\ref{genus}), (\ref{count-step2}) and (\ref{count-step3}) gives:
\begin{eqnarray}\label{count-step4}
|(A+A)\setminus C(0)|
&\lneq&
\sum_{T'=1 }^{p-1}
\left(
T'q-\lf \frac{T'\ell}{p}\rf -1
\right)
-1
+ (m-1)(p-2)
+\left(pq-\lf \frac{p\ell}{p}\rf  -2 \right)\nonumber \\
&=& g-1+mp-2m-p+2+m-2 \nonumber\\
&=& g+(m-1)(p-1)-2\nonumber\\
&=&3g-2 \nonumber
\end{eqnarray}
and changing $\lneq$ to $\leq$ gives the desired
\begin{equation*}
|(A+A)\setminus C(0)|\leq 3g-3
\end{equation*}
completing the proof.
\end{proof}
We conclude this section by extending the result of Lemma \ref{final-inequality} to the intersection of the sets $C(i)$. First, we prove an auxiliary lemma:
\begin{lemma}\label{aux-b(T)}
For any $\alpha\in\mathbb{N}$, $b(T+\alpha)\leq b(T)+\alpha$.
\end{lemma}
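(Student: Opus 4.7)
The plan is to reduce the inequality to a careful comparison of two floor-function estimates, exploiting the fact that $b(T)$ takes only two possible values, $\lfloor T\ell/p\rfloor$ or $\lfloor T\ell/p\rfloor-1$, as dictated by Definition \ref{lower-bound}. First, I would record the elementary fact that since $1\le \ell \le p-1$, one has $\alpha\ell/p<\alpha$, which gives the basic estimate
\[
\lfloor (T+\alpha)\ell/p\rfloor \;\le\; \lfloor T\ell/p\rfloor+\alpha,
\]
following from the subadditivity $\lfloor x+y\rfloor\le \lfloor x\rfloor+\lceil y\rceil$ with $\lceil\alpha\ell/p\rceil\le\alpha$.

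Next, I would split into cases according to the two possible values of $b(T)$ and $b(T+\alpha)$. Three of the four combinations follow immediately from the estimate above: if $b(T+\alpha)=\lfloor(T+\alpha)\ell/p\rfloor-1$, the right-hand side only decreases, so
\[
b(T+\alpha)\;\le\;\lfloor(T+\alpha)\ell/p\rfloor\;\le\;\lfloor T\ell/p\rfloor+\alpha,
\]
and comparing with either possible value of $b(T)+\alpha$ suffices, except in the single delicate case where simultaneously $b(T)=\lfloor T\ell/p\rfloor-1$ (the smaller value) and $b(T+\alpha)=\lfloor(T+\alpha)\ell/p\rfloor$ (the larger value).

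The core of the proof lies in this remaining case, and the idea is to use the \emph{witness} of $b(T)=\lfloor T\ell/p\rfloor-1$ to produce a decomposition of $T+\alpha$ that contradicts $b(T+\alpha)$ being the larger value \emph{unless} the desired inequality holds. By Definition \ref{lower-bound} there exist $\mu_1,\mu_2\ge 1$ with $T=\mu_1+\mu_2$ and
\[
\lfloor\mu_1\ell/p\rfloor+\lfloor\mu_2\ell/p\rfloor \;=\;\lfloor T\ell/p\rfloor-1.
\]
I would then consider the decomposition $T+\alpha=\mu_1+(\mu_2+\alpha)$, which is admissible in the definition of $b(T+\alpha)$ since both summands are $\ge 1$. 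Since $b(T+\alpha)=\lfloor(T+\alpha)\ell/p\rfloor$, every such admissible decomposition must realise this value, so
\[
\lfloor(T+\alpha)\ell/p\rfloor \;=\; \lfloor\mu_1\ell/p\rfloor+\lfloor(\mu_2+\alpha)\ell/p\rfloor.
\]
Applying the basic estimate to $\lfloor(\mu_2+\alpha)\ell/p\rfloor\le \lfloor\mu_2\ell/p\rfloor+\alpha$ and inserting the witness equation gives exactly
\[
b(T+\alpha)\;\le\;\lfloor T\ell/p\rfloor-1+\alpha\;=\;b(T)+\alpha,
\]
completing the proof. The main obstacle is really just identifying this delicate case and noticing that the same witness decomposition for $b(T)$ can be transported to $T+\alpha$; once this is done, all remaining cases are routine floor manipulations.
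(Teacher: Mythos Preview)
Your proof is correct and, in fact, more careful than the paper's. The paper argues in a single chain
\[
b(T+\alpha)\leq \left\lfloor \tfrac{(T+\alpha)\ell}{p}\right\rfloor = \left\lfloor \tfrac{T\ell}{p} +\tfrac{\alpha\ell}{p} \right\rfloor < \left\lfloor \tfrac{T\ell}{p} +\alpha \right\rfloor = \left\lfloor \tfrac{T\ell}{p}\right\rfloor +\alpha \leq b(T)+1 +\alpha,
\]
and then deduces $b(T+\alpha)\leq b(T)+\alpha$ from the strict inequality. But the middle step $\left\lfloor T\ell/p + \alpha\ell/p\right\rfloor < \left\lfloor T\ell/p + \alpha\right\rfloor$ is not valid in general: $x<y$ does not force $\lfloor x\rfloor<\lfloor y\rfloor$. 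For instance with $p=5$, $\ell=4$, $T=2$, $\alpha=1$ both floors equal $2$. So the paper's chain, read literally, only yields $b(T+\alpha)\leq b(T)+\alpha+1$.

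Your approach is genuinely different: you use only the non-strict estimate $\lfloor(T+\alpha)\ell/p\rfloor\leq\lfloor T\ell/p\rfloor+\alpha$, which disposes of three of the four cases immediately, and then handle the remaining case (where $b(T)$ takes its smaller value and $b(T+\alpha)$ its larger one) by transporting the witness decomposition $T=\mu_1+\mu_2$ to $T+\alpha=\mu_1+(\mu_2+\alpha)$ and invoking the first branch of Definition~\ref{lower-bound} at $T+\alpha$. This is exactly the extra idea needed, and your argument is complete. What the paper's route buys is brevity; what yours buys is a proof that actually closes in the delicate case.
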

\begin{proof}
If $\alpha=0$, the result follows trivially. For $\alpha\geq 1$, and since $\ell< p$, we have that
\[
b(T+\alpha)\leq \lf \frac{(T+\alpha)\ell}{p}\rf=
\lf \frac{T\ell}{p} +\frac{\alpha\ell}{p} \rf <
\lf \frac{T\ell}{p} +\alpha \rf =
\lf \frac{T\ell}{p}\rf +\alpha
\leq 
b(T)+1 +\alpha
\]
so that $b(T+\alpha)\leq b(T)+\alpha$.
\end{proof}
We proceed with showing that $C(0)$ is contained in $C(i)$ for all $0\leq i\leq p$:
\begin{lemma}\label{final-inequality-relative} $C(0)\subseteq C(i)$ for all $0\leq i \leq p$.
\end{lemma}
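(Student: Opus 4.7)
The plan is to take an arbitrary element $(\rho,T) \in C(0)$ and verify directly that it satisfies the defining conditions of $C(i)$. By the explicit description of $C(0)$ in Lemma \ref{description-C}, we have $(\rho, T) \in A+A$ with $2 \leq T \leq p-2$ and $b(T) \leq \rho \leq Tq-4$. Moreover, by the very definition of $C(0)$, the point $(\rho+\ell, T+p)$ already lies in $A+A$, so this condition in the definition of $C(i)$ is automatically inherited. It remains to check the family of points $(\rho+j,T+p-i) \in A+A$ for $j_{\min}(i) \leq j \leq (p-i)q$.

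By Lemma \ref{description-Minkowski}, membership in $A+A$ amounts to three inequalities. First, the bounds $2 \leq T+p-i \leq 2(p-1)$: since $2 \leq T \leq p-2$ and $0 \leq i \leq p$, one has $T+p-i \in [T, T+p] \subseteq [2, 2p-2]$. Second, the upper bound on the first coordinate, $\rho+j \leq (T+p-i)q - 4$: combining $\rho \leq Tq-4$ with $j \leq (p-i)q$ yields $\rho+j \leq Tq-4 + (p-i)q = (T+p-i)q-4$.

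The only non-trivial step is the lower bound $\rho + j \geq b(T+p-i)$. Here I would split into the two cases of the definition of $j_{\min}(i)$. If $\ell = 1$, then $\lfloor \mu\ell/p \rfloor = 0$ for every $1 \leq \mu \leq p-1$, so a short check shows $b(S)=0$ for every admissible $S$; in particular $b(T+p-i)=0$ and the bound is immediate since $\rho, j \geq 0$. If $\ell \neq 1$, then $j_{\min}(i) = p-i$, and I would apply Lemma \ref{aux-b(T)} with $\alpha = p-i$ to obtain $b(T+p-i) \leq b(T) + (p-i)$; combining with $\rho \geq b(T)$ and $j \geq p-i$ gives $\rho + j \geq b(T) + (p-i) \geq b(T+p-i)$, as required.

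I expect no genuine obstacle here: the result is essentially a bookkeeping exercise, and the only delicate point is the monotonicity estimate for $b$, which is already packaged in Lemma \ref{aux-b(T)}. The case split on $\ell=1$ versus $\ell \neq 1$ mirrors the split already present in the definition of $j_{\min}(i)$, so the argument organizes itself naturally.
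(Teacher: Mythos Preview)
Your proposal is correct and follows essentially the same approach as the paper: both start from the description of $C(0)$ in Lemma~\ref{description-C}, verify the bounds on $T+p-i$ and the upper bound on $\rho+j$ directly, and then split on $\ell=1$ versus $\ell\neq 1$ for the lower bound, invoking Lemma~\ref{aux-b(T)} in the latter case. The only cosmetic difference is that in the $\ell=1$ case the paper writes the chain $b(T+p-i)\leq b(T+p)\leq\rho$ while you argue directly that $b(S)=0$; these amount to the same observation.
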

\begin{proof}
Let $(\rho,T)\in C(0)$, so that, by Lemma \ref{description-C}, $b(T)\leq \rho\leq Tq-4$ and $2\leq T\leq p-2$. To show that $(\rho,T)\in C(i)$, by Definition \ref{definition-Ci}, it suffices to show that $(\rho+j,T+p-i)\in A+A$ for $j_{\min}(i)\leq j\leq (p-i)q$. First, we observe that
\[
2\leq T\leq  T+p-i\leq p-2+p-i\leq 2(p-1)\]
and
\[
\rho+j\leq \rho +(p-i)q\leq Tq-4+(p-i)q\leq (T+p-i)q-4
\]
For the lower bound of $\rho$, we distinguish the following cases:
\begin{itemize}
\item If $\ell=1$, then $j_{\min}(i)=0$ and 
\[b(T+p-i)\leq b(T+p)\leq \rho\leq \rho + j.\]
\item If $\ell>1$ then $j_{\min}(i)=p-i$, and, by Lemma \ref{aux-b(T)}, 
\[b(T+p-i)\leq b(T)+p-i\leq \rho +p-i\leq \rho +j.\]
\end{itemize}
We conclude that $ 2\leq T \leq 2(p-1)$ and that $b(T+p-i)\leq \rho +j \leq (T+p-i)q-4$ for $j_{\min}(i)\leq j\leq (p-i)q$. Lemma \ref{description-Minkowski} implies that $(\rho+j,T+p-i)\in A+A$, completing the proof.
\end{proof}
\section{The canonical ideal on the generic fibre}
\label{sec:generic-fibre}
The affine model for family's generic fibre given in eq. (\ref{bm-model}) is equivalent to
\begin{equation}\label{generic-fibre}
\mathcal{X}_{\eta}: 1-\lambda^px^{\ell}y^{-p}-a(x)^p y^{-p}=0
\end{equation}
for $y=a(x)(\lambda X+1)$, where $a(x)$ is given by 
\begin{equation*}
a(x)=
\begin{cases}
x^q+x_1 x^{q-1}+ \cdots + + x_{q-1} x + x_q,  & \text{ if } \ell=1 \\
x^q+x_1 x^{q-1}+ \cdots + x_{q-1} x, & \text{ if } \ell\neq 1.
\end{cases}
\end{equation*}
As before, we let $j_{\min}(0)$ be $0$ if $\ell=1$ and $p$ if $\ell\neq 1$. By taking the $p$-th power of $a(x)$ we get
\begin{equation}\label{a(x)p}
a(x)^p=\sum_{j=j_{\min}(0)}^{pq} c_{j,p} x^j
\end{equation}
where for any $j_{\min}(0)\leq j\leq pq$
\begin{equation*}
c_{j,p}=
\sum_{
0 \leq t_i <p \atop t_1+2t_2 \cdots+qt_q=j}
\binom{p}{t_0,\ldots,t_q}
\prod_{i=0}^{q} x_i^{t_i}.
\end{equation*}
Let $\mathbf{b}$ be Boseck's basis for $H^0(\mathcal{X}_{\eta},\Omega_{\mathcal{X}_{\eta}/L})$ as in eq. (\ref{def-Boseck}) and let
\begin{eqnarray*}
\phi_{\eta,\mathbf{b}}:
S=L[\{\omega_{N,\mu}\}]   & \longrightarrow &
\bigoplus_{n\geq 0} H^0(\mathcal{X}_{\eta},\Omega^{\otimes n}_{\mathcal{X}_{\eta}/L}),
\\
\omega_{N_1,\mu_1}^{a_1}\cdots\omega_{N_d,\mu_d}^{a_d}
& \longmapsto &
x^{(a_1N_1+\cdots + a_d N_d)}y^{-(a_1\mu_1+\cdots + a_d\mu_d)}dx^{\otimes (a_1+\cdots +a_d)}\nonumber
\end{eqnarray*}
be the canonical map. We write $I_{\mathcal{X}_\eta}:=\ker\phi_{\eta,\mathbf{b}}$ for the canonical ideal and note that the following polynomials are in $I_{\mathcal{X}_\eta}$:
\begin{proposition}\label{trinoms-generic}
For $j_{\min}\leq j \leq pq$, let $\omega_{N,\mu}\omega_{N',\mu'},\;\omega_{N'',\mu''}\omega_{N''',\mu'''}$ and $\omega_{N_j,\mu_j}\omega_{N'_j,\mu'_j}$ be any monomials in $\mathbb{T}^2$ satisfying 
\begin{eqnarray}\label{trinoms-mdeg}
\mdeg(\omega_{N'',\mu''}\omega_{N''',\mu'''})&=&\mdeg(\omega_{N,\mu}\omega_{N',\mu'})+(0,\ell,p)\text{ and }\\
\mdeg(\omega_{N_j,\mu_j}\omega_{N'_j,\mu'_j})&=&\mdeg(\omega_{N,\mu}\omega_{N',\mu'})+(0,j,p)\nonumber.
\end{eqnarray}
Then 
\begin{equation*}
\omega_{N,\mu}\omega_{N',\mu'}
-\lambda^p\omega_{N'',\mu''}\omega_{N''',\mu'''}
-\sum_{j=j_{\text{min}}(0)}^{pq}c_{j,p}\cdot\omega_{N_j,\mu_j}\omega_{N'_j,\mu'_j}
\in I_{\mathcal{X}_{\eta}}.
\end{equation*}
\end{proposition}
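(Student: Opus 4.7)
The plan is to verify directly that applying $\phi_{\eta,\mathbf{b}}$ to the given trinomial yields zero in the canonical ring, by recognizing its three groups of terms as precisely the three summands of the defining equation (\ref{generic-fibre}) of $\mathcal{X}_\eta$, each scaled by one common monomial factor.

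First I compute the image of each quadratic monomial under $\phi_{\eta,\mathbf{b}}$. By definition,
\[
\phi_{\eta,\mathbf{b}}(\omega_{N,\mu}\omega_{N',\mu'}) = x^{N+N'} y^{-(\mu+\mu')} dx^{\otimes 2}.
\]
The multidegree hypothesis $\mdeg(\omega_{N'',\mu''}\omega_{N''',\mu'''})=\mdeg(\omega_{N,\mu}\omega_{N',\mu'})+(0,\ell,p)$ forces $N''+N'''=N+N'+\ell$ and $\mu''+\mu'''=\mu+\mu'+p$; the analogous identities hold for each $j$-indexed pair. Hence
\[
\phi_{\eta,\mathbf{b}}(\omega_{N'',\mu''}\omega_{N''',\mu'''}) = x^{N+N'+\ell} y^{-(\mu+\mu'+p)} dx^{\otimes 2},
\]
and
\[
\phi_{\eta,\mathbf{b}}(\omega_{N_j,\mu_j}\omega_{N'_j,\mu'_j}) = x^{N+N'+j} y^{-(\mu+\mu'+p)} dx^{\otimes 2}.
\]

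I then factor out the common term $x^{N+N'} y^{-(\mu+\mu')} dx^{\otimes 2}$ from the image of the trinomial to get
\[
x^{N+N'} y^{-(\mu+\mu')} dx^{\otimes 2}\cdot\Bigl(1-\lambda^{p} x^{\ell} y^{-p}- y^{-p}\sum_{j=j_{\min}(0)}^{pq} c_{j,p}\, x^{j}\Bigr).
\]
Using equation (\ref{a(x)p}) to replace $\sum_{j} c_{j,p} x^{j}$ by $a(x)^{p}$, the parenthesized factor becomes $1-\lambda^{p} x^{\ell} y^{-p}-a(x)^{p} y^{-p}$, which vanishes identically on $\mathcal{X}_\eta$ by (\ref{generic-fibre}). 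Thus the trinomial lies in $\ker\phi_{\eta,\mathbf{b}}=I_{\mathcal{X}_\eta}$.

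There is no real obstacle: the statement is essentially a translation of the defining equation of the generic fibre into the language of the canonical embedding. The only care required is the bookkeeping on exponents, and the multidegree assumptions on the three chosen pairs of variables are exactly what make this bookkeeping work, independently of the particular representatives chosen within each Minkowski class.
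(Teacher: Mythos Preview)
Your proof is correct and follows essentially the same approach as the paper: apply $\phi_{\eta,\mathbf{b}}$ term by term, use the multidegree hypotheses to align the exponents, factor out $x^{N+N'}y^{-(\mu+\mu')}dx^{\otimes 2}$, and recognize the remaining factor as the defining equation (\ref{generic-fibre}) via the expansion (\ref{a(x)p}). The only cosmetic difference is that the paper writes out the full image before substituting the index relations, whereas you substitute as you go.
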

\begin{proof}
Let
\begin{equation*}
f=\omega_{N,\mu}\omega_{N',\mu'}
-\lambda^p\omega_{N'',\mu''}\omega_{N''',\mu'''}
-\sum_{j=j_{\text{min}}(0)}^{pq}c_{j,p}\cdot\omega_{N_j,\mu_j}\omega_{N'_j,\mu'_j}\in S
\end{equation*}
be a polynomial whose terms satisfy the relations of eq. (\ref{trinoms-mdeg}) or, equivalently,
\begin{eqnarray}\label{trinoms-mdeg-2}
N''+N'''=N+N'+\ell &,&\;\mu''+\mu'''=\mu+\mu'+p\text{ and }\\
N_j+N'_j=N+N'+j&,&\;\mu_j+\mu'_j=\mu+\mu'+p.\nonumber
\end{eqnarray}
Applying the canonical map $\phi_{\eta,\mathbf{b}}$ to $f$ gives
\begin{equation}\label{phi-trinoms-1}
x^{N+N'}y^{-(\mu+\mu')}
dx^{\otimes 2}
-\lambda^p x^{N''+N'''}y^{-(\mu''+\mu''')}
dx^{\otimes 2}
-\sum_{j=j_{\min}(0)}^{pq}c_{j,p}\cdot x^{N_j+N'_j}y^{-(\mu_j+\mu'_j)}
dx^{\otimes 2},
\end{equation}
and using the relations of eq. (\ref{trinoms-mdeg-2}) we may rewrite eq. (\ref{phi-trinoms-1}) as
\begin{equation}\label{phi-trinoms-2}
x^{N+N'}y^{-(\mu+\mu')}
dx^{\otimes 2}
-\lambda^p x^{N+N'+\ell}y^{-(\mu+\mu'+p)}
dx^{\otimes 2}
-\sum_{j=j_{\min}(0)}^{pq}c_{j,p}\cdot x^{N+N'+j}y^{-(\mu+\mu'+p)}dx^{\otimes 2}.
\end{equation}
Factoring out $x^{N+N'}y^{-(\mu+\mu')}dx^{\otimes 2}$ from eq. (\ref{phi-trinoms-2}) gives
\begin{equation*}
x^{N+N'}y^{-(\mu+\mu')}dx^{\otimes 2}\cdot
\left(1-\lambda^px^{\ell}y^{-p}-\sum_{j=j_\text{min}(0)}^{pq}c_{j,p}x^{j}y^{-p}\right)
\end{equation*}
and combining with the expansion of $a(x)^p$ in eq. (\ref{a(x)p}) we get
\begin{equation*}
x^{N+N'}y^{-(\mu+\mu')}dx^{\otimes 2}\cdot
\left( 1-\lambda^px^{\ell}y^{-p}-a(x)^p y^{-p}\right)
\end{equation*}
which is $0$ by eq.(\ref{generic-fibre}), completing the proof.
\end{proof}
We collect the polynomials of Proposition \ref{trinoms-generic} in the set below.
\begin{definition}\label{trinom-gens-generic}
Let 
\begin{multline*}
G_2^{\mathbf{b}}:=\bigg\{
\omega_{N,\mu}\omega_{N',\mu'}
-\lambda^p\omega_{N'',\mu''}\omega_{N''',\mu'''}
-\sum_{j=j_{\text{min}}(0)}^{pq}c_{j,p}\cdot\omega_{N_j,\mu_j}\omega_{N'_j,\mu'_j}\in S\;:\\
\mdeg(\omega_{N'',\mu''}\omega_{N''',\mu'''})=\mdeg(\omega_{N,\mu}\omega_{N',\mu'})+(0,\ell,p),\\\mdeg(\omega_{N_j,\mu_j}\omega_{N'_j,\mu'_j})=\mdeg(\omega_{N,\mu}\omega_{N',\mu'})+(0,j,p),\;\\
\text{ for }j_{\min}(0)\leq j \leq pq
\;
\bigg\}.
\end{multline*}
\end{definition}
We write $G_1^{\mathbf{b}}$ for the set of binomials in Definition \ref{binom-gens-all}. The main result of this section is the following:
\begin{theorem}\label{theorem 1}
$I_{\mathcal{X}_\eta}=\left<G_1^{\mathbf{b}}\cup G_2^{\mathbf{b}} \right>$.
\end{theorem}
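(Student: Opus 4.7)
The plan is to apply the counting criterion of Proposition \ref{dimension-criterion} to the set $G = G_1^{\mathbf{b}} \cup G_2^{\mathbf{b}}$. First, by Proposition \ref{binomials-all} and Proposition \ref{trinoms-generic}, both $G_1^{\mathbf{b}}$ and $G_2^{\mathbf{b}}$ are contained in $I_{\mathcal{X}_\eta}$, so the containment $\langle G\rangle \subseteq I_{\mathcal{X}_\eta}$ is immediate, and it suffices to verify that $\dim_L(S/\langle \init(G)\rangle)_2 \leq 3(g-1)$.

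The next step is to identify the leading monomials. For any $f \in G_2^{\mathbf{b}}$, the three groups of terms have multidegrees with $\mu$-sums equal to $\mu+\mu'$, $\mu+\mu'+p$, and $\mu+\mu'+p$ respectively; under the order of Definition \ref{term-order}, smaller $\mu$-sum makes a monomial larger, so $\init(f)$ lies in the set $B_{N+N',\mu+\mu'}$ (the specific element being determined by $\prec_t$). By using the freedom in the definition of $G_2^{\mathbf{b}}$ to choose the monomial $\omega_{N,\mu}\omega_{N',\mu'}$, one can arrange for $\init(f) = \sigma(\rho,T)$ for any $(\rho,T) = (N+N',\mu+\mu')$ such that the multidegrees appearing in the trinomial, namely $(\rho+\ell, T+p)$ and $(\rho+j, T+p)$ for $j_{\min}(0) \leq j \leq pq$, all lie in $A+A$ — and this is precisely the defining condition for $(\rho,T) \in C(0)$.

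Combining this with Proposition \ref{corresp-binoms-Mink}, the map $\sigma$ gives a bijection between $A+A$ and the standard monomials of $S/\langle \init(G_1^{\mathbf{b}})\rangle$ in degree two, and the above construction kills precisely the $\sigma$-images of points of $C(0)$. Therefore
\begin{equation*}
\dim_L\bigl(S/\langle \init(G_1^{\mathbf{b}} \cup G_2^{\mathbf{b}})\rangle\bigr)_2 \;\leq\; |(A+A) \setminus C(0)| \;\leq\; 3(g-1),
\end{equation*}
where the last inequality is Lemma \ref{final-inequality}. An application of Proposition \ref{dimension-criterion} then yields $I_{\mathcal{X}_\eta} = \langle G_1^{\mathbf{b}} \cup G_2^{\mathbf{b}}\rangle$.

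The main obstacle — and really the only substantive point — is the bookkeeping in the previous paragraph: verifying that for every $(\rho,T) \in C(0)$ there is an element of $G_2^{\mathbf{b}}$ whose leading monomial equals $\sigma(\rho,T)$. This requires simultaneously that the multidegree conditions in Definition \ref{trinom-gens-generic} are satisfiable (handled by the definition of $C(0)$) and that the term order ranks the ``$(\mu+\mu')$-part'' strictly above the ``$(\mu+\mu'+p)$-part'' (handled by clause (ii) of Definition \ref{term-order}). Once these two pieces are confirmed, the remaining argument is purely a count using Proposition \ref{corresp-binoms-Mink} and Lemma \ref{final-inequality}, with no further analysis of the curve required.
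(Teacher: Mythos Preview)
Your proposal is correct and follows essentially the same approach as the paper: establish the inclusion via Propositions \ref{binomials-all} and \ref{trinoms-generic}, then bound $\dim_L(S/\langle\init(G)\rangle)_2$ by $|(A+A)\setminus C(0)|$ and invoke Lemma \ref{final-inequality} and Proposition \ref{dimension-criterion}. The only difference is organizational---the paper packages your ``bookkeeping paragraph'' into two separate auxiliary lemmas (Lemma \ref{sigma-C} showing $\sigma(C(0))\subseteq\init(G_2^{\mathbf{b}})$ and Lemma \ref{corresp-trinoms-Mink} deducing the dimension bound), whereas you argue both points inline.
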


To prove Theorem \ref{theorem 1}, we will use the dimension criterion of Proposition \ref{dimension-criterion} and a series of lemmas. We consider the subset $C(0)$ of $A+A$ given in Definition \ref{definition-Ci}
\begin{equation*}
C(0)=\{(\rho,T)\in A+A\;|\; (\rho+\ell,T+p)\in A+A\text{ and }(\rho+j,T+p)\in A+A\text{ for }j_\text{min}(0)\leq j \leq pq \}.
\end{equation*}
and study its image under the map $ \sigma:A+A\rightarrow\mathbb{T}^2$ given in Definition \ref{sigma}.
\begin{lemma}\label{sigma-C}
$\sigma(C(0))\subseteq \init(G_2^{\mathbf{b}})$
\end{lemma}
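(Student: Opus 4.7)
The plan is to unwind the definition of $C(0)$ and read off, for each $(\rho,T) \in C(0)$, an explicit element of $G_2^{\mathbf{b}}$ whose initial monomial is exactly $\sigma(\rho,T)$. The existence of the required ``secondary'' monomials is automatic from the definition of $C(0)$, so the real content is verifying that with respect to $\prec$ the base monomial is indeed the leading one.

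Fix $(\rho,T) \in C(0)$ and set $\omega_{N,\mu}\omega_{N',\mu'} := \sigma(\rho,T)$; by definition of $\sigma$ this is a monomial in $B_{\rho,T}$, so $N+N'=\rho$ and $\mu+\mu'=T$. Since $(\rho+\ell,T+p) \in A+A$, Corollary~\ref{mdeg-characterization} produces some $\omega_{N'',\mu''}\omega_{N''',\mu'''} \in \mathbb{T}^2$ of multidegree $(2,\rho+\ell,T+p)$, and likewise for each $j$ with $j_{\min}(0)\leq j \leq pq$ the membership $(\rho+j,T+p)\in A+A$ yields a monomial $\omega_{N_j,\mu_j}\omega_{N'_j,\mu'_j}$ of multidegree $(2,\rho+j,T+p)$. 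Plugging all of these into the template of Definition~\ref{trinom-gens-generic} produces a bona fide element
\begin{equation*}
f := \omega_{N,\mu}\omega_{N',\mu'} - \lambda^p\,\omega_{N'',\mu''}\omega_{N''',\mu'''} - \sum_{j=j_{\min}(0)}^{pq} c_{j,p}\,\omega_{N_j,\mu_j}\omega_{N'_j,\mu'_j} \in G_2^{\mathbf{b}}.
\end{equation*}

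It remains to check that $\init(f) = \omega_{N,\mu}\omega_{N',\mu'}$. Every monomial appearing in $f$ has standard degree $2$, so clauses (i) of Definition~\ref{term-order} is inconclusive; however the base monomial has $\sum \mu_i = T$, while every other monomial has $\sum \mu_i = T+p$. Since $T < T+p$, clause (ii) -- which declares the monomial with the strictly larger $\mu$-sum to be the smaller one under $\prec$ -- immediately forces $\omega_{N,\mu}\omega_{N',\mu'}$ to dominate all other terms, regardless of their $N$-sums or lex-tiebreaks. Its coefficient in $f$ is $1 \neq 0$, so $\init(f) = \omega_{N,\mu}\omega_{N',\mu'} = \sigma(\rho,T)$, giving $\sigma(\rho,T) \in \init(G_2^{\mathbf{b}})$ as required.

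The only subtle point to watch for is the pathological possibility that the base monomial coincides with one of the secondary monomials, which would cause cancellation and shift the leading term; but this cannot happen since the two classes of monomials have distinct multidegrees (their $\mu$-components differ by $p$). No delicate combinatorics or coefficient vanishing arguments are needed -- the entire statement is essentially a bookkeeping exercise reflecting that clause (ii) of $\prec$ was designed precisely to make the ``$y^{-p}$ trick'' of Proposition~\ref{trinoms-generic} raise the $\mu$-sum and thereby lower the monomial in the order.
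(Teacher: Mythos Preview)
Your proof is correct and follows essentially the same approach as the paper's: both pick $(\rho,T)\in C(0)$, use membership of $(\rho+\ell,T+p)$ and $(\rho+j,T+p)$ in $A+A$ to assemble an element of $G_2^{\mathbf{b}}$ with base monomial $\sigma(\rho,T)$, and then observe that this base monomial is the initial term. The only cosmetic difference is that the paper selects the secondary monomials via $\sigma$ as well, while you invoke Corollary~\ref{mdeg-characterization} to pick arbitrary monomials of the right multidegree; this is immaterial since Definition~\ref{trinom-gens-generic} permits any such choice. Your explicit verification of $\init(f)=\sigma(\rho,T)$ via clause~(ii) of the term order, and your remark ruling out cancellation, make the argument more self-contained than the paper's terse ``by construction''.
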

\begin{proof}
If $(\rho,T)\in C(0)$ then by definition $(\rho,T)\in A+A$, $(\rho+\ell,T+p)\in A+A$ and $(\rho+j,T+p)\in A+A$ for all $j_\text{min}(0)\leq j \leq pq$. Hence, the monomials
\[
\omega_{N,\mu}\omega_{N',\mu'}:=\sigma(\rho,T),\;
\omega_{N'',\mu''}\omega_{N''',\mu'''}:=\sigma(\rho+\ell,T+p),\;
\omega_{N_j,\mu_j}\omega_{N'_j,\mu'_j}:=\sigma(\rho+j,T+p)
\]
give rise to a polynomial
\[g=\omega_{N,\mu}\omega_{N',\mu'}
-\lambda^p\omega_{N'',\mu''}\omega_{N''',\mu'''}
-\sum_{j=j_{\text{min}}}^{pq}c_{j,p}\cdot\omega_{N_j,\mu_j}\omega_{N'_j,\mu'_j},\]
which, by construction, satisfies $g\in G_2^{\mathbf{b}}$ and $\init(g)=\sigma(\rho,T)$.
\end{proof}
\begin{lemma}\label{corresp-trinoms-Mink}
$\dim_L \left(S/\langle \init (G_1^{\mathbf{b}}\cup G_2^{\mathbf{b}} )\rangle \right)_2\leq |(A+A)\setminus C(0)|.$
\end{lemma}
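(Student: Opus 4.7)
The plan is to combine the bijection $\sigma$ from Proposition \ref{corresp-binoms-Mink} with the containment of Lemma \ref{sigma-C} to get a bound on the standard monomial basis of $\left(S/\langle \init(G_1^{\mathbf{b}}\cup G_2^{\mathbf{b}})\rangle\right)_2$. Since $\langle \init(G_1^{\mathbf{b}}\cup G_2^{\mathbf{b}})\rangle$ is a monomial ideal generated in degree $2$, its degree-$2$ part is spanned by the set of monomials $\init(G_1^{\mathbf{b}})\cup \init(G_2^{\mathbf{b}})\subseteq \mathbb{T}^2$, so
\[
\dim_L\left(S/\langle \init(G_1^{\mathbf{b}}\cup G_2^{\mathbf{b}})\rangle\right)_2
= \bigl|\mathbb{T}^2\setminus\bigl(\init(G_1^{\mathbf{b}})\cup \init(G_2^{\mathbf{b}})\bigr)\bigr|.
\]

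Next, I would exploit the description of $\mathbb{T}^2\setminus \init(G_1^{\mathbf{b}})$ obtained in the proof of Proposition \ref{corresp-binoms-Mink}: the map $\sigma:A+A\to \mathbb{T}^2$ is injective and its image is exactly $\mathbb{T}^2\setminus \init(G_1^{\mathbf{b}})$. Therefore
\[
\mathbb{T}^2\setminus\bigl(\init(G_1^{\mathbf{b}})\cup \init(G_2^{\mathbf{b}})\bigr)
= \sigma(A+A)\setminus \init(G_2^{\mathbf{b}}).
\]
By Lemma \ref{sigma-C} we have $\sigma(C(0))\subseteq \init(G_2^{\mathbf{b}})$, hence $\sigma(A+A)\setminus \init(G_2^{\mathbf{b}})\subseteq \sigma(A+A)\setminus \sigma(C(0))$, and injectivity of $\sigma$ yields $\sigma(A+A)\setminus \sigma(C(0))=\sigma\bigl((A+A)\setminus C(0)\bigr)$, which has cardinality exactly $|(A+A)\setminus C(0)|$.

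Chaining these (in)equalities gives the desired bound
\[
\dim_L\left(S/\langle \init(G_1^{\mathbf{b}}\cup G_2^{\mathbf{b}})\rangle\right)_2 \leq |(A+A)\setminus C(0)|.
\]
There is essentially no hard step here: once one recognizes that the degree-$2$ standard monomials are indexed by $\sigma(A+A)$ and that $\sigma$ carries $C(0)$ into $\init(G_2^{\mathbf{b}})$, the inequality is a bookkeeping exercise. The only subtlety worth flagging explicitly is the injectivity of $\sigma$ (already recorded in Proposition \ref{corresp-binoms-Mink}), which is what turns the set-theoretic inclusion into the clean cardinality bound, and the fact that $\langle\init(G_1^{\mathbf{b}}\cup G_2^{\mathbf{b}})\rangle_2$ is precisely the $L$-span of the degree-$2$ initial monomials, so counting monomials outside it gives the dimension of the quotient in degree $2$.
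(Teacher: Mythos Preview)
Your proof is correct and follows essentially the same approach as the paper's: both use that $\sigma(A+A)=\mathbb{T}^2\setminus\init(G_1^{\mathbf{b}})$ from Proposition~\ref{corresp-binoms-Mink}, the containment $\sigma(C(0))\subseteq\init(G_2^{\mathbf{b}})$ from Lemma~\ref{sigma-C}, and the injectivity of $\sigma$ to bound the number of degree-$2$ standard monomials. The only difference is presentational order --- you begin with the dimension count and work toward the set inclusion, whereas the paper establishes the set inclusion first.
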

\begin{proof}
By Proposition \ref{corresp-binoms-Mink} we have that $\sigma(A+A)=\mathbb{T}^2\setminus\init(G_1^{\mathbf{b}})$ and by Lemma \ref{sigma-C} we have that $\sigma(C(0))\subseteq \init(G_2^{\mathbf{b}})$, so
\begin{equation}\label{sigma-Mink-C}
\sigma\big( (A+A)\setminus C(0)\big)\supseteq
\mathbb{T}^2\setminus\left(\init(G_1^{\mathbf{b}})\cup \init(G_2^{\mathbf{b}})\right).
\end{equation}
Since $\sigma$ is one-to-one, eq. (\ref{sigma-Mink-C}) gives
\[
|(A+A)\setminus C(0)| = |\sigma\big( (A+A)\setminus C(0)\big)|\geq
|\mathbb{T}^2\setminus\left(\init(G_1^{\mathbf{b}})\cup \init(G_2^{\mathbf{b}})\right)|.
\]
Finally, $\langle \init(G_1^{\mathbf{b}})\cup \init(G_2^{\mathbf{b}}) \rangle $ is a monomial ideal generated in degree $2$ so
\[
\dim_L \left(S / \langle \init(G_1^{\mathbf{b}})\cup \init(G_2^{\mathbf{b}}) \rangle \right)_2
=
|\mathbb{T}^2\setminus\left(\init(G_1^{\mathbf{b}})\cup \init(G_2^{\mathbf{b}})\right)|,
\]
completing the proof.
\end{proof}
\begin{proof}[Proof of Theorem \ref{theorem 1}]
By Proposition \ref{binomials-all} and Proposition \ref{trinoms-generic} we get that $\langle G_1^{\mathbf{b}}\cup G_2^{\mathbf{b}}\rangle \subseteq I_{\mathcal{X}_{\eta}}$. By Lemma \ref{corresp-trinoms-Mink} and Lemma \ref{final-inequality} we get that $\dim_L \left(S/\langle \init (G_1^{\mathbf{b}}\cup G_2^{\mathbf{b}} )\rangle \right)_2\leq 3(g-1)$. Proposition \ref{dimension-criterion} implies that $I_{\mathcal{X}_{\eta}}=\langle G_1^{\mathbf{b}}\cup G_2^{\mathbf{b}}\rangle$.
\end{proof}

\section{The canonical ideal on the special fibre}
\label{sec:special-fibre}
The affine model for the family's special fibre given in eq. (\ref{special-fibre}) is equivalent to
\begin{equation}\label{special-fibre-equiv}
\mathcal{X}_0: 1-x^{\ell}a(x)^{-p}X^{-p}-X^{-(p-1)}=0
\end{equation}
where $a(x)$ is given by eq. (\ref{a(x)}). Let $j_{\min}$ be $0$ if $\ell=1$ and $p-1$ if $\ell\neq 1$. By taking the $(p-1)$-th power of $a(x)$ we get that
\begin{equation}\label{a(x)p-1}
a(x)^{p-1}=
\sum_{j=j_{\min}(1)}^{(p-1)q} c_{j,p-1} x^j, 
\end{equation} 
where for $j_{\min}(1)\leq j \leq (p-1)q$
\begin{equation*}
c_{j,p-1}=
\sum_{(t_0,\ldots,t_q)\in\mathbb{N}^q \atop t_1+2t_2 \cdots+qt_q=j}
\binom{p-1}{t_0,\ldots,t_q}
\prod_{i=0}^{q}x_i^{t_i}.
\end{equation*}
Let $\overline{\mathbf{c}}$ be the basis for $H^0(\mathcal{X}_0,\Omega_{\mathcal{X}_0/k})$ as in eq. (\ref{def-Karan}) and let
\begin{eqnarray*}
\phi_{0,\overline{\mathbf{c}}}:S=k[\{ w_{N,\mu}\}]&\longrightarrow& \bigoplus_{n\geq 0}H^0(\mathcal{X}_0,\Omega_{\mathcal{X}_0/k}^{\otimes n})\\
w_{N_1,\mu_1}^{a_1}\cdots w_{N_d,\mu_d}^{a_d}&\longmapsto& x^{(a_1N_1+\cdots+a_dN_d)}\left(a(x)X\right)^{a_1(p-1-\mu_1)+\cdots a_d(p-1-\mu_d)}\nonumber
\end{eqnarray*}
be the canonical map. Write $I_{\mathcal{X}_0}:=\ker\phi_{0,\overline{\mathbf{c}}}$ for the canonical ideal and note that the following polynomials are in $I_{\mathcal{X}_0}$:
\begin{proposition}\label{trinoms-special}
For $j_{\min}(1)\leq j\leq (p-1)q$, let $w_{N,\mu}w_{N',\mu'},\;w_{N'',\mu''}w_{N''',\mu'''}$ and $w_{N_j,\mu_j}w_{N'_j,\mu'_j}$ be monomials in $\mathbb{T}^2$ satisfying
\begin{eqnarray}\label{mdeg-trinoms-special}
\mdeg(w_{N'',\mu''}w_{N''',\mu'''})&=&\mdeg(w_{N,\mu}w_{N',\mu'})+(0,\ell,p)\\
\mdeg(w_{N_j,\mu_j}w_{N'_j,\mu'_j})&=&\mdeg(w_{N,\mu}w_{N',\mu'})+(0,j,p-1).\nonumber
\end{eqnarray}
Then
\begin{equation*}
w_{N,\mu}w_{N',\mu'}-w_{N'',\mu''}w_{N''',\mu'''}-\sum_{j=j_{\min}(1)}^{(p-1)q}c_{j,p-1}w_{N_j,\mu_j}w_{N'_j,\mu'_j}\in I_{\mathcal{X}_0}.
\end{equation*}
\end{proposition}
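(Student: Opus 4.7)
The plan is to mirror the strategy used in Proposition \ref{trinoms-generic}: apply the canonical map $\phi_{0,\overline{\mathbf{c}}}$ to the proposed polynomial, collect exponents using the multidegree relations in eq. (\ref{mdeg-trinoms-special}), factor out a common monomial, and recognize the remaining factor as a scalar multiple of the defining equation (\ref{special-fibre-equiv}) of the special fibre.

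More concretely, writing the multidegree relations in coordinate form, we have $N''+N'''=N+N'+\ell,\;\mu''+\mu'''=\mu+\mu'+p$ and $N_j+N'_j=N+N'+j,\;\mu_j+\mu'_j=\mu+\mu'+p-1$. Applying $\phi_{0,\overline{\mathbf{c}}}$ to each of the three monomials in the proposed polynomial replaces $w_{N_1,\mu_1}w_{N_2,\mu_2}$ by $x^{N_1+N_2}(a(x)X)^{2(p-1)-(\mu_1+\mu_2)}dx^{\otimes 2}$. After substituting the relations, the image becomes
\begin{equation*}
x^{N+N'}(aX)^{2(p-1)-(\mu+\mu')}dx^{\otimes 2} - x^{N+N'+\ell}(aX)^{p-2-(\mu+\mu')}dx^{\otimes 2} - \sum_{j=j_{\min}(1)}^{(p-1)q} c_{j,p-1}\,x^{N+N'+j}(aX)^{p-1-(\mu+\mu')}dx^{\otimes 2}.
\end{equation*}

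The next step is to factor out the common monomial $x^{N+N'}(aX)^{2(p-1)-(\mu+\mu')}dx^{\otimes 2}$. The first term contributes $1$, the second contributes $x^\ell(aX)^{-p}=x^\ell a^{-p}X^{-p}$, and using eq. (\ref{a(x)p-1}) to collapse $\sum_j c_{j,p-1}x^j = a(x)^{p-1}$, the third term contributes $a(x)^{p-1}(aX)^{-(p-1)}=X^{-(p-1)}$. The bracketed factor is therefore precisely
\begin{equation*}
1 - x^\ell a(x)^{-p} X^{-p} - X^{-(p-1)},
\end{equation*}
which vanishes by eq. (\ref{special-fibre-equiv}), so the image of the polynomial under $\phi_{0,\overline{\mathbf{c}}}$ is zero, proving membership in $I_{\mathcal{X}_0}$.

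There is no real obstacle in this proof; the calculation is completely parallel to that of Proposition \ref{trinoms-generic}, the only structural difference being that the sum now produces $a(x)^{p-1}$ rather than $a(x)^p$, which is exactly what is needed for the third term to reproduce the factor $X^{-(p-1)}$ appearing in the Artin--Schreier equation (\ref{special-fibre-equiv}) instead of the Kummer-type factor $y^{-p}$ present in the generic fibre. The slight care required is simply to verify that the exponents $p-2-(\mu+\mu')$ and $p-1-(\mu+\mu')$ match the bounds imposed by the Minkowski relations on $(\mu,\mu')$ so that the factorization is well-defined; this is automatic from the assumption that the monomials lie in $\mathbb{T}^2$ with multidegrees in $A+A$.
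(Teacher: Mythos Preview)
Your proposal is correct and follows essentially the same approach as the paper's proof: apply $\phi_{0,\overline{\mathbf{c}}}$, translate the multidegree conditions into index relations, factor out the common monomial $x^{N+N'}(a(x)X)^{2(p-1)-(\mu+\mu')}dx^{\otimes 2}$, use eq.~(\ref{a(x)p-1}) to collapse the sum to $a(x)^{p-1}$, and recognize the resulting bracket as the defining relation (\ref{special-fibre-equiv}). The only superfluous remark is your final caveat about the exponents $p-2-(\mu+\mu')$ and $p-1-(\mu+\mu')$ being ``well-defined'': the computation takes place in the function field of $\mathcal{X}_0$, where negative exponents on $a(x)X$ are perfectly meaningful, so no additional bound-checking is required.
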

\begin{proof}
Let
\begin{equation*}
f:=w_{N,\mu}w_{N',\mu'}-w_{N'',\mu''}w_{N''',\mu'''}-\sum_{j=j_{\min}(1)}^{(p-1)q}c_{j,p-1}w_{N_j,\mu_j}w_{N'_j,\mu'_j}\in S
\end{equation*}
be a polynomial whose terms satisfy the relations of eq. (\ref{mdeg-trinoms-special}) or, equivalently,
\begin{eqnarray}\label{trinoms-mdeg-special-2}
N''+N''''=N+N'+\ell&,&\mu''+\mu'''=\mu+\mu'+p\\
N_j+N'_j=N+N'+j&,&\mu_j+\mu'_j=\mu+\mu'+p-1.\nonumber
\end{eqnarray}
Applying the canonical map $\phi_{0,\overline{\mathbf{c}}}$ to $f$ gives
\begin{multline}\label{phi-trinoms-special-1}
x^{N+N'}\left(a(x)X\right)^{2p-(\mu+\mu')}dx^{\otimes 2}- x^{N''+N'''}\left(a(x)X\right)^{2p-(\mu''+\mu''')}dx^{\otimes 2}\\
-\sum_{j=j_{\min}(1)}^{(p-1)q}c_{j,p-1}x^{N_j+N'_j}\left(a(x)X\right)^{2p-(\mu_j+\mu'_j)}dx^{\otimes 2}
\end{multline}
and using the relations of eq. (\ref{trinoms-mdeg-special-2}) we may rewrite eq. (\ref{phi-trinoms-special-1}) as
\begin{multline}\label{phi-trinoms-special-2}
x^{N+N'}\left(a(x)X\right)^{2p-(\mu+\mu')}dx^{\otimes 2}- x^{N+N'+\ell}\left(a(x)X\right)^{2p-(\mu+\mu'+p)}dx^{\otimes 2}\\
-\sum_{j=j_{\min}(1)}^{(p-1)q}c_{j,p-1}x^{N+N'+j}\left(a(x)X\right)^{2p-(\mu+\mu'+p-1)}dx^{\otimes 2}
\end{multline}
Factoring out $^{N+N'}\left(a(x)X\right)^{2p-(\mu+\mu')}dx^{\otimes 2}$ from eq. (\ref{phi-trinoms-special-2}) gives
\begin{equation*}
x^{N+N'}\left(a(x)X\right)^{2p-(\mu+\mu')}dx^{\otimes 2}\cdot
\left(1-x^{\ell}\left(a(x)X\right)^{-p}-\sum_{j=j_\text{min}(1)}^{(p-1)q}c_{j,p-1} x^{j}\left(a(x)X\right)^{-(p-1)} \right)
\end{equation*}
and combining with the expansion of $a(x)^{p-1}$ in eq. (\ref{a(x)p-1}) we get
\begin{eqnarray*}
&=&x^{N+N'}\left(a(x)X\right)^{2p-(\mu+\mu')}dx^{\otimes 2}\cdot\left(1-x^{\ell}\left(a(x)X\right)^{-p}-a(x)^{p-1}\left(a(x)X\right)^{-(p-1)}\right)\\
&=&x^{N+N'}\left(a(x)X\right)^{2p-(\mu+\mu')}dx^{\otimes 2}\cdot\left(1-x^{\ell}a(x)^{-p}X^{-p}-X^{-(p-1)}\right)
\end{eqnarray*}
which is $0$ by eq. (\ref{special-fibre-equiv}), completing the proof.
\end{proof}
We collect the polynomials of Proposition (\ref{trinoms-generic}) in the set below
\begin{definition}\label{trinom-gens-special}
Let
\begin{multline*}
G_2^{\overline{\mathbf{c}}}=
\bigg\{
w_{N,\mu}w_{N',\mu'}-w_{N'',\mu''}w_{N''',\mu'''}-\sum_{j=j_{\min}(1)}^{(p-1)q}c_{j,p-1}w_{N_j,\mu_j}w_{N'_j,\mu'_j} \in S\;:\;
\\
\mdeg(w_{N'',\mu''}w_{N''',\mu'''})=\mdeg(w_{N,\mu}w_{N',\mu'})+(0,\ell,p),\;\\
\mdeg(w_{N_j,\mu_j}w_{N'_j,\mu'_j})=\mdeg(w_{N,\mu}w_{N',\mu'})+(0,j,p-1),\;\\
\text{ for } j_{\min}(1)\leq j\leq (p-1)q
\bigg\}.
\end{multline*}
\end{definition}
We write $G_1^{\overline{\mathbf{c}}}$ for the set of binomials in Definition \ref{binom-gens-all}. The main result of this section is the following:
\begin{theorem}\label{theorem 2}
 $I_{\mathcal{X}_0}=\langle G_1^{\overline{\mathbf{c}}}\cup G_2^{\overline{\mathbf{c}}} \rangle$.
\end{theorem}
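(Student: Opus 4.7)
\emph{Proof proposal.} The plan is to mimic the proof of Theorem \ref{theorem 1} essentially verbatim, with one crucial bookkeeping adjustment: the shift $(0,j,p)$ appearing in the trinomials of $G_2^{\mathbf{b}}$ is replaced by $(0,j,p-1)$ in the trinomials of $G_2^{\overline{\mathbf{c}}}$, so the relevant subset of $A+A$ to track is $C(1)$ rather than $C(0)$. The strategy is to show that $\langle G_1^{\overline{\mathbf{c}}}\cup G_2^{\overline{\mathbf{c}}}\rangle\subseteq I_{\mathcal{X}_0}$ and that $\dim_k(S/\langle\init(G_1^{\overline{\mathbf{c}}}\cup G_2^{\overline{\mathbf{c}}})\rangle)_2\leq 3(g-1)$, then invoke Proposition \ref{dimension-criterion}.

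First I would record the containment $\langle G_1^{\overline{\mathbf{c}}}\cup G_2^{\overline{\mathbf{c}}}\rangle\subseteq I_{\mathcal{X}_0}$, which is immediate from Proposition \ref{binomials-all} (applied to the ring $k[\{w_{N,\mu}\}]$) and Proposition \ref{trinoms-special}. Next, I would establish the special-fibre analogue of Lemma \ref{sigma-C}, namely $\sigma(C(1))\subseteq \init(G_2^{\overline{\mathbf{c}}})$. Given $(\rho,T)\in C(1)$, by Definition \ref{definition-Ci} the points $(\rho,T)$, $(\rho+\ell,T+p)$ and $(\rho+j,T+p-1)$ for $j_{\min}(1)\leq j\leq (p-1)q$ all lie in $A+A$, so setting
\[
w_{N,\mu}w_{N',\mu'}:=\sigma(\rho,T),\quad
w_{N'',\mu''}w_{N''',\mu'''}:=\sigma(\rho+\ell,T+p),\quad
w_{N_j,\mu_j}w_{N'_j,\mu'_j}:=\sigma(\rho+j,T+p-1)
\]
and applying Proposition \ref{trinoms-special} produces a polynomial
\[
g=w_{N,\mu}w_{N',\mu'}-w_{N'',\mu''}w_{N''',\mu'''}-\sum_{j=j_{\min}(1)}^{(p-1)q}c_{j,p-1}\,w_{N_j,\mu_j}w_{N'_j,\mu'_j}\in G_2^{\overline{\mathbf{c}}}.
\]
Because $T<T+p-1<T+p$, clause (ii) of Definition \ref{term-order} forces $\init(g)=w_{N,\mu}w_{N',\mu'}=\sigma(\rho,T)$.

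With this in hand, the dimension bound is obtained exactly as in Lemma \ref{corresp-trinoms-Mink}. From $\sigma(A+A)=\mathbb{T}^2\setminus\init(G_1^{\overline{\mathbf{c}}})$ (Proposition \ref{corresp-binoms-Mink}) and $\sigma(C(1))\subseteq\init(G_2^{\overline{\mathbf{c}}})$, together with the injectivity of $\sigma$, one gets
\[
\dim_k\bigl(S/\langle\init(G_1^{\overline{\mathbf{c}}}\cup G_2^{\overline{\mathbf{c}}})\rangle\bigr)_2
=\bigl|\mathbb{T}^2\setminus\bigl(\init(G_1^{\overline{\mathbf{c}}})\cup\init(G_2^{\overline{\mathbf{c}}})\bigr)\bigr|
\leq\bigl|(A+A)\setminus C(1)\bigr|.
\]
Finally, Lemma \ref{final-inequality-relative} gives $C(0)\subseteq C(1)$, hence $|(A+A)\setminus C(1)|\leq |(A+A)\setminus C(0)|\leq 3(g-1)$ by Lemma \ref{final-inequality}. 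Proposition \ref{dimension-criterion} then concludes that $I_{\mathcal{X}_0}=\langle G_1^{\overline{\mathbf{c}}}\cup G_2^{\overline{\mathbf{c}}}\rangle$.

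The only genuinely new verification compared to Theorem \ref{theorem 1} is the leading-term claim in Step 2, and the main (mild) obstacle is ensuring that the inequality $|(A+A)\setminus C(1)|\leq 3(g-1)$ holds; I expect this to reduce cleanly to the already-established case $|(A+A)\setminus C(0)|\leq 3(g-1)$ via the inclusion $C(0)\subseteq C(1)$ of Lemma \ref{final-inequality-relative}, so no new combinatorial count is required.
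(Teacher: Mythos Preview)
Your proposal is correct and follows the paper's own argument essentially line for line: the paper proves Lemma \ref{sigma-D} (your Step 2) and Lemma \ref{trinom-init-char} (your dimension bound) exactly as you describe, then chains Lemma \ref{final-inequality-relative} and Lemma \ref{final-inequality} to get $|(A+A)\setminus C(1)|\leq|(A+A)\setminus C(0)|\leq 3(g-1)$ before invoking Proposition \ref{dimension-criterion}. Your explicit justification of $\init(g)=\sigma(\rho,T)$ via clause (ii) of Definition \ref{term-order} is a welcome detail that the paper leaves implicit.
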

To prove Theorem \ref{theorem 2} we will use the dimension criterion of Proposition \ref{dimension-criterion} and a series of lemmas. We consider the subset $C(1)$ of $A+A$ given by Definition \ref{definition-Ci}
\begin{equation*}
C(1)=\{
(\rho,T)\in A+A\;:\;(\rho+\ell, T+p )\text{ and }(\rho+j,T+p-1) \in A+A \text{ for } j_{\min}(1)\leq j\leq (p-1)q
\},
\end{equation*}
and study its image under the map $\sigma:A+A\rightarrow\mathbb{T}^2$ given in Definition \ref{sigma}.
\begin{lemma}\label{sigma-D}
$\sigma(C(1))\subseteq \init(G_2^{\overline{\mathbf{c}}})$.
\end{lemma}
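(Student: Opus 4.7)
The plan is to imitate the proof of Lemma~\ref{sigma-C} almost verbatim, making the appropriate substitutions dictated by Definition~\ref{trinom-gens-special}: the $\lambda^p$ coefficient disappears, the structure constants $c_{j,p}$ are replaced by $c_{j,p-1}$, and the third family of monomials sits at multidegree shift $(0,j,p-1)$ instead of $(0,j,p)$.

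First I would fix an arbitrary $(\rho,T)\in C(1)$. Definition~\ref{definition-Ci} gives the three memberships $(\rho,T),\,(\rho+\ell,T+p),\,(\rho+j,T+p-1)\in A+A$ for every $j$ with $j_{\min}(1)\leq j\leq (p-1)q$, so Corollary~\ref{mdeg-characterization} guarantees that the monomials
\[
w_{N,\mu}w_{N',\mu'}:=\sigma(\rho,T),\qquad w_{N'',\mu''}w_{N''',\mu'''}:=\sigma(\rho+\ell,T+p),\qquad w_{N_j,\mu_j}w_{N'_j,\mu'_j}:=\sigma(\rho+j,T+p-1)
\]
are all well-defined elements of $\mathbb{T}^2$ whose multidegrees agree with the shifts prescribed by Definition~\ref{trinom-gens-special}. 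Hence the polynomial
\[
g:=w_{N,\mu}w_{N',\mu'}-w_{N'',\mu''}w_{N''',\mu'''}-\sum_{j=j_{\min}(1)}^{(p-1)q}c_{j,p-1}\,w_{N_j,\mu_j}w_{N'_j,\mu'_j}
\]
automatically belongs to $G_2^{\overline{\mathbf{c}}}$.

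It then remains to show $\init(g)=w_{N,\mu}w_{N',\mu'}=\sigma(\rho,T)$, which is a direct check against the term order of Definition~\ref{term-order}. Every monomial appearing in $g$ has standard degree~$2$, so clause~(i) is tied and the tiebreaker is clause~(ii), under which a \emph{smaller} $\mu$-sum produces a $\prec$-\emph{larger} monomial. The three $\mu$-sums present in $g$ are $T$, $T+p$ and $T+p-1$, and since $p\geq 2$ we have $T<T+p-1<T+p$, so $w_{N,\mu}w_{N',\mu'}$ wins immediately. This yields $\sigma(\rho,T)=\init(g)\in\init(G_2^{\overline{\mathbf{c}}})$, proving the inclusion $\sigma(C(1))\subseteq\init(G_2^{\overline{\mathbf{c}}})$.

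I do not expect a serious obstacle: the only point where one could slip is the sign convention in clause~(ii) of Definition~\ref{term-order}, namely remembering that larger $\sum\mu_i$ means smaller under $\prec$, so that it is the term with the \emph{smallest} $\mu$-sum that emerges as the leading term. Once this is in hand, the argument is a faithful transcription of Lemma~\ref{sigma-C}, and the existence of the ambient monomials in $B_{\rho+\ell,T+p}$ and $B_{\rho+j,T+p-1}$ is precisely what the membership $(\rho,T)\in C(1)$ was designed to guarantee.
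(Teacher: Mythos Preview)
Your proof is correct and follows essentially the same approach as the paper's: both pick $(\rho,T)\in C(1)$, use $\sigma$ to produce the three families of monomials, assemble the polynomial $g\in G_2^{\overline{\mathbf{c}}}$, and observe that $\init(g)=\sigma(\rho,T)$. Your version is actually slightly more detailed, since you explicitly verify the leading-term claim against clause~(ii) of Definition~\ref{term-order}, whereas the paper leaves this as ``by construction.''
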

\begin{proof}
If $(\rho,T)\in C(1)$ then by definition $(\rho,T)\in A+A,\;(\rho+\ell,T+p)\in A+A$ and $(\rho+j,T+p-1)\in A+A$ for all $j_{\min}(1)\leq j\leq (p-1)q$. Hence the monomials
\[
w_{N,\mu}w_{N',\mu'}:=\sigma(\rho,T),w_{N'',\mu''}w_{N''',\mu'''}:=\sigma(\rho+\ell,T+p),\;w_{N_j,\mu_j}w_{N'_j,\mu'_j}:=\sigma(\rho+j,T+p-1)
\]
give rise to a polynomial
\[
g=w_{N,\mu}w_{N',\mu'}-w_{N'',\mu''}w_{N''',\mu'''}-\sum_{j=j_{\min}(1)}^{(p-1)q}c_{j,p-1}w_{N_j,\mu_j}w_{N'_j,\mu'_j},
\]
which, by construction, satisfies $g\in G_2^{\overline{\mathbf{c}}}$ and $\init(g)=\sigma(\rho,T)$. 
\end{proof}
\begin{lemma}\label{trinom-init-char}
$\dim_k \left( S/\langle \init\left(G_1^{\overline{\mathbf{c}}}\cup G_2^{\overline{\mathbf{c}}}\right)\rangle\right)_2\leq |(A+A)\setminus C(1)|.$
\end{lemma}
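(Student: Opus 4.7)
The plan is to mirror exactly the argument of Lemma \ref{corresp-trinoms-Mink}, which handled the analogous statement for the generic fibre: the structural pieces all have direct counterparts, with $C(1)$ replacing $C(0)$, $G_2^{\overline{\mathbf{c}}}$ replacing $G_2^{\mathbf{b}}$, and Lemma \ref{sigma-D} replacing Lemma \ref{sigma-C}. There is no need to re-examine the combinatorics of the index set or revisit the construction of $\sigma$.

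First I would recall that by Proposition \ref{corresp-binoms-Mink} the map $\sigma:A+A\to\mathbb{T}^2$ is injective with image $\mathbb{T}^2\setminus\init(G_1^{\overline{\mathbf{c}}})$. Next I would apply Lemma \ref{sigma-D}, which asserts the inclusion $\sigma(C(1))\subseteq \init(G_2^{\overline{\mathbf{c}}})$. Together these two facts yield the set-theoretic inclusion
\[
\sigma\bigl((A+A)\setminus C(1)\bigr)\;\supseteq\; \mathbb{T}^2\setminus \bigl(\init(G_1^{\overline{\mathbf{c}}})\cup \init(G_2^{\overline{\mathbf{c}}})\bigr),
\]
because a monomial in $\mathbb{T}^2$ lying outside both initial sets must be of the form $\sigma(\rho,T)$ for some $(\rho,T)\in A+A$, and that $(\rho,T)$ cannot lie in $C(1)$ lest $\sigma(\rho,T)\in\init(G_2^{\overline{\mathbf{c}}})$.

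Since $\sigma$ is one-to-one, taking cardinalities gives
\[
|(A+A)\setminus C(1)|\;\geq\; \bigl|\mathbb{T}^2\setminus \bigl(\init(G_1^{\overline{\mathbf{c}}})\cup \init(G_2^{\overline{\mathbf{c}}})\bigr)\bigr|.
\]
Finally, because $\langle \init(G_1^{\overline{\mathbf{c}}}) \cup \init(G_2^{\overline{\mathbf{c}}})\rangle$ is a monomial ideal generated in degree $2$, the degree-$2$ graded piece of the quotient $S/\langle \init(G_1^{\overline{\mathbf{c}}}\cup G_2^{\overline{\mathbf{c}}})\rangle$ has $k$-dimension equal to the number of monomials in $\mathbb{T}^2$ that are not in the initial set. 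Combining gives the claimed inequality.

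In summary, this lemma is a purely formal restatement of Lemma \ref{corresp-trinoms-Mink} in the special-fibre setting, and I do not anticipate any genuine obstacle: the substantive work has already been done in establishing the bijection in Proposition \ref{corresp-binoms-Mink} and the compatibility in Lemma \ref{sigma-D}.
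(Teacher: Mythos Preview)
Your proposal is correct and follows essentially the same argument as the paper's own proof: both invoke Proposition \ref{corresp-binoms-Mink} for $\sigma(A+A)=\mathbb{T}^2\setminus\init(G_1^{\overline{\mathbf{c}}})$ and Lemma \ref{sigma-D} for $\sigma(C(1))\subseteq\init(G_2^{\overline{\mathbf{c}}})$, deduce the set-theoretic inclusion, and pass to cardinalities using injectivity of $\sigma$ and the fact that the initial ideal is monomial and generated in degree $2$. Your framing of this as a formal transcription of Lemma \ref{corresp-trinoms-Mink} is exactly right.
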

\begin{proof}
By Proposition \ref{corresp-binoms-Mink} we have that $\sigma(A+A)=\mathbb{T}^2\setminus\init(G_1^{\overline{\mathbf{c}}})$ and by Lemma \ref{sigma-D} we have that $\sigma(C(1))\subseteq \init(G_2^{\overline{\mathbf{c}}})$, so
\begin{equation}\label{sigma-Mink-D}
\sigma\big( (A+A)\setminus C(1)\big)\supseteq
\mathbb{T}^2\setminus\left(\init(G_1^{\overline{\mathbf{c}}})\cup \init(G_2^{\overline{\mathbf{c}}})\right).
\end{equation}
Since $\sigma$ is one-to-one, eq. (\ref{sigma-Mink-D}) gives
\[
|(A+A)\setminus C(1)| = |\sigma\big( (A+A)\setminus C(1)\big)|\geq
|\mathbb{T}^2\setminus\left(\init(G_1^{\overline{\mathbf{c}}})\cup \init(G_2^{\overline{\mathbf{c}}})\right)|.
\]
Finally, $\langle \init(G_1^{\overline{\mathbf{c}}})\cup \init(G_2^{\overline{\mathbf{c}}}) \rangle $ is a monomial ideal generated in degree $2$ so
\[
\dim_k \left(S / \langle \init(G_1^{\overline{\mathbf{c}}})\cup \init(G_2^{\overline{\mathbf{c}}}) \rangle \right)_2
=
|\mathbb{T}^2\setminus\left(\init(G_1^{\overline{\mathbf{c}}})\cup \init(G_2^{\overline{\mathbf{c}}})\right)|,
\]
completing the proof.
\end{proof}
\begin{proof}[Proof of Theorem \ref{theorem 2}]
By Proposition \ref{binomials-all} and Proposition \ref{trinoms-special} we get that $\langle G_1^{\overline{\mathbf{c}}}\cup G_2^{\overline{\mathbf{c}}}\rangle \subseteq I_{\mathcal{X}_{0}}$. By Lemma \ref{trinom-init-char} and Lemma \ref{final-inequality-relative} we get that $\dim_k \left(S/\langle \init (G_1^{\overline{\mathbf{c}}}\cup G_2^{\overline{\mathbf{c}}} )\rangle \right)_2\leq |(A+A)\setminus C(1)|\leq |(A+A)\setminus C(0)|$ so Lemma \ref{final-inequality} gives $\dim_k \left(S/\langle \init (G_1^{\overline{\mathbf{c}}}\cup G_2^{\overline{\mathbf{c}}} )\rangle \right)_2\leq 3(g-1)$. Proposition \ref{dimension-criterion} implies that $I_{\mathcal{X}_{0}}=\langle G_1^{\overline{\mathbf{c}}}\cup G_2^{\overline{\mathbf{c}}}\rangle$, completing the proof.
\end{proof}

\section{Thickening and reduction}
\label{sec:reduction}
Let $\mathcal{X}\rightarrow \Spe R$ denote the family of curves with generic fiber given by 
\begin{equation*} 
\mathcal{X}_{\eta}:y^p=\lambda^p x^\ell +a(x)^p
\end{equation*}
and special fiber given by 
\begin{equation*}
\mathcal{X}_0: X^p-X=\frac{x^{\ell}}{a(x)^p}.
\end{equation*}
Recall that for $0\leq i \leq p$ we let $j_{\min}(i)$ be 0 if $\ell=1$ and $p-i$ if $\ell\neq 1$. By taking the $(p-i)$-th power of $a(x)$ we get that
\begin{equation}\label{a(x)(p-i)}
a(x)^{p-i}=
\sum_{j=j_{\min}(i)}^{(p-i)q} c_{j,p-i} x^j
\end{equation} 
where for $j_{\min}(i)\leq j \leq (p-i)q$
\begin{equation*}
c_{j,p-i}=
\sum_{(t_0,\ldots,t_q)\in\mathbb{N}^q \atop t_1+2t_2 \cdots+qt_q=j}
\binom{p-i}{t_0,\ldots,t_q}
\prod_{s=0}^q x_s^{t_s}.
\end{equation*}
In \cite{KaranProc} the authors prove that a basis for the free $R-$module $H^0(\mathcal{X},\Omega_{\mathcal{X}/R})$ is given by
\begin{equation*}
\mathbf{c}=\left\{\frac{x^N a(x)^{p-1-\mu}X^{p-1-\mu}}{a(x)^{p-1}(\lambda X+1)^{p-1}}dx:\lf \frac{\mu\ell}{p}\rf N\leq \mu q-2,\;1\leq\mu\leq p-1 \right\}.
\end{equation*}
Let 
\begin{eqnarray*}
\phi_{\mathbf{c}}: S=R[\{W_{N,\mu}\}]&\longrightarrow& \bigoplus_{n\geq 0} H^0(\mathcal{X},\Omega_{\mathcal{X}/R}^{\otimes n})\\
W_{N_1,\mu_1}^{a_1}\cdots W_{N_d,\mu_d}^{a_d}
&\longmapsto&
\frac{x^{(a_1N_1+\cdots a_d N_d)} (a(x)X)^{a_1(p-1-\mu_1)+\cdots a_d(p-1-\mu_d)}}{a(x)^{(a_1+\ldots+a_d)(p-1)}(\lambda X+1)^{(a_1+\ldots+a_d)(p-1)}}dx^{\otimes (a_1+\ldots+a_d)}\nonumber
\end{eqnarray*}
be the canonical map. We write $I_{\mathcal{X}}:=\ker\phi_{\mathbf{c}}$ for the canonical ideal and note that the following polynomials are in $I_{\mathcal{X}}$:
\begin{proposition}\label{trinoms-relative}
Let $1\leq i \leq p-1$. For $j_{\min}(i)\leq j \leq (p-i)q$, let $W_{N,\mu}W_{N',\mu'},\;W_{N'',\mu''}W_{N''',\mu'''}$ and $W_{N_j,\mu_i}W_{N'_j,\mu'_i}$ be any monomials in $\mathbb{T}^2$ satisfying 
\begin{eqnarray*}
\mdeg(W_{N'',\mu''}W_{N''',\mu'''})&=&\mdeg(W_{N,\mu}W_{N',\mu'})+(0,\ell,p),\;\\
\mdeg(W_{N_j,\mu_i}W_{N_j',\mu_i'})&=&\mdeg(W_{N,\mu}W_{N',\mu'})+(0,j,p-i)\nonumber.
\end{eqnarray*}
Then 
\begin{equation*}
W_{N,\mu}W_{N',\mu'}-
W_{N'',\mu}W_{N''',\mu'''}+
\sum_{i=1}^{p-1}\sum_{j=j_{\min}(i)}^{(p-i)q}\lambda^{i-p}\binom{p}{i}c_{j,p-i}W_{N_j,\mu_i}W_{N_j',\mu_i'}
\in I_{\mathcal{X}}.
\end{equation*}
\end{proposition}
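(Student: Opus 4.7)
The plan is to follow the template set by Propositions \ref{trinoms-generic} and \ref{trinoms-special}: apply the canonical map $\phi_{\mathbf{c}}$ to the given polynomial, use the multidegree hypotheses to extract a common factor from its image, and reduce the remaining scalar identity to the Bertin-M\'ezard equation $y^{p} = \lambda^{p}x^{\ell} + a(x)^{p}$ defining $\mathcal{X}$ via $y = a(x)(\lambda X + 1)$.

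Concretely, I would start by writing
\[
F := \frac{x^{N+N'}(a(x)X)^{2(p-1)-(\mu+\mu')}}{a(x)^{2(p-1)}(\lambda X+1)^{2(p-1)}}\,dx^{\otimes 2}
\]
for the image $\phi_{\mathbf{c}}(W_{N,\mu}W_{N',\mu'})$. From the multidegree shifts $(0,\ell,p)$ and $(0,j,p-i)$ in the hypothesis, the formula for $\phi_{\mathbf{c}}$ immediately yields
\[
\phi_{\mathbf{c}}(W_{N'',\mu''}W_{N''',\mu'''}) = F\cdot\frac{x^{\ell}}{a(x)^{p}X^{p}}, \qquad \phi_{\mathbf{c}}(W_{N_j,\mu_i}W_{N_j',\mu_i'}) = F\cdot\frac{x^{j}}{a(x)^{p-i}X^{p-i}}.
\]
Factoring $F$ out of the image of the whole polynomial reduces the problem to the scalar identity
\[
1 - \frac{x^{\ell}}{a(x)^{p}X^{p}} + \sum_{i=1}^{p-1}\lambda^{i-p}\binom{p}{i}\sum_{j=j_{\min}(i)}^{(p-i)q} c_{j,p-i}\,\frac{x^{j}}{a(x)^{p-i}X^{p-i}} = 0 \qquad (\star)
\]
on $\mathcal{X}$.

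The inner sum in $(\star)$ collapses: by eq.~(\ref{a(x)(p-i)}) we have $\sum_{j=j_{\min}(i)}^{(p-i)q} c_{j,p-i}\, x^{j} = a(x)^{p-i}$, so $(\star)$ becomes
\[
1 - \frac{x^{\ell}}{a(x)^{p}X^{p}} + \sum_{i=1}^{p-1}\lambda^{i-p}\binom{p}{i}X^{-(p-i)} = 0.
\]
Clearing denominators by multiplying through by $\lambda^{p}X^{p}$ and recognising that $\lambda^{p}X^{p} + \sum_{i=1}^{p-1}\binom{p}{i}\lambda^{i}X^{i} = (\lambda X + 1)^{p} - 1$, the identity takes the form $(\lambda X + 1)^{p} = (a(x)^{p} + \lambda^{p}x^{\ell})/a(x)^{p} = y^{p}/a(x)^{p}$, which holds because $y = a(x)(\lambda X + 1)$.

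There is no serious obstacle here: once the three images have been computed correctly, the remainder is a matter of recognising the binomial expansion of $(\lambda X+1)^{p}$ in the resulting scalar, analogously to how eq.~(\ref{generic-fibre}) and eq.~(\ref{special-fibre-equiv}) were used on the two fibres. The only place requiring care is the matching of the lower summation bound $j_{\min}(i)$ with the support of $a(x)^{p-i}$, which is handled by the usual case split $\ell = 1$ versus $\ell \neq 1$ already built into the definition of $j_{\min}(i)$.
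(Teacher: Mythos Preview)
Your approach is essentially the paper's: apply $\phi_{\mathbf{c}}$, factor out the common $2$-differential, collapse the inner $j$-sum via eq.~(\ref{a(x)(p-i)}), and recognise the truncated binomial expansion of $(\lambda X+1)^p$ to reduce to the defining relation $y=a(x)(\lambda X+1)$. The paper carries out exactly this computation.

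One point the paper makes explicit that you omit: before applying $\phi_{\mathbf{c}}$ one must check that the polynomial actually lies in $S_R=R[\{W_{N,\mu}\}]$, i.e.\ that the coefficients $\lambda^{i-p}\binom{p}{i}$ belong to $R$ and not merely to $L$. Since $\lambda=\zeta-1$ appears with negative exponent $i-p$, this is not automatic. The paper invokes the standard relations $p\lambda^{s}\equiv 0\pmod{\mathfrak m}$ for $-(p-1)<s<0$ and $p\lambda^{-(p-1)}\equiv -1\pmod{\mathfrak m}$ (eq.~(\ref{Be-Me})), together with $p\mid\binom{p}{i}$ for $1\le i\le p-1$, to conclude $\lambda^{i-p}\binom{p}{i}\in R$. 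Without this, $I_{\mathcal X}=\ker\phi_{\mathbf c}\subseteq S_R$ does not even contain the polynomial in question, so the vanishing of its image is moot. Add this check and your argument is complete.
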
 
\begin{proof}
Let
\begin{equation*}
f:=W_{N,\mu}W_{N',\mu'}-
W_{N'',\mu}W_{N''',\mu'''}+
\sum_{i=1}^{p-1}\sum_{j=j_{\min}(i)}^{(p-i)q}\lambda^{i-p}\binom{p}{i}c_{j,p-i}W_{N_j,\mu_i}W_{N_j',\mu_i'}
\end{equation*}
where
\begin{eqnarray}\label{reltrinoms-mdeg-2}
N''+N'''=N+N'+\ell &,&\;\mu''+\mu'''=\mu+\mu'+p\text{ and }\\
N_j+N'_j=N+N'+j&,&\;\mu_i+\mu'_i=\mu+\mu'+p-i.\nonumber
\end{eqnarray}
We note that $f\in R[\{W_{N,\mu}\}]$, since by \cite[sec. 4.3]{BeMe2002}
\begin{equation}\label{Be-Me}
p\cdot\lambda^s \equiv
\begin{cases}
0 \mod\mathfrak{m},\;\text{for}\;-(p-1)<s<0\\
 -1\mod\mathfrak{m},\;\text{for}\;s=-(p-1),
\end{cases}
\end{equation}
which implies that $\lambda^{i-p}\binom{p}{i}\in\mathfrak{m}\subseteq\mathfrak{m}_R\subseteq R$ for all $1\leq i \leq p-1$. Applying the canonical map $\phi_{\mathbf{c}}$ to $f$ gives
\begin{multline}\label{relphi-trinoms-0}
\bigg(\frac{x^{N+N'}\left(a(x)X\right)^{2(p-1)-(\mu+\mu')}}{(a(x)(\lambda X+1))^{2(p-1)}}
dx^{\otimes 2}
- \frac{x^{N''+N'''}\left(a(x)X\right)^{2(p-1)-(\mu''+\mu''')}
}{(a(x)(\lambda X+1))^{2(p-1)}}dx^{\otimes 2}\\
+\sum_{i=1}^{p-1}\sum_{j=j_{\text{min}}}^{(p-i)q}\lambda^{i-p}\binom{p}{i}c_{j,p-i}\frac{x^{N_j+N'_j}\left(a(x)X\right)^{2(p-1)-(\mu_i+\mu'_i)}}{(a(x)(\lambda X+1))^{2(p-1)}}
dx^{\otimes 2}\bigg),
\end{multline}
and using the relations of eq. (\ref{reltrinoms-mdeg-2}) we may rewrite eq. (\ref{relphi-trinoms-0}) as
\begin{multline*}
\frac{x^{N+N'}\left(a(x)X\right)^{2(p-1)-(\mu+\mu')}}{(a(x)(\lambda X+1))^{2(p-1)}}
dx^{\otimes 2}
- \frac{x^{N+N'+\ell}\left(a(x)X\right)^{2(p-1)-(\mu+\mu'+p)}}{(a(x)(\lambda X+1))^{2(p-1)}}
dx^{\otimes 2}\\
+\sum_{i=1}^{p-1}\sum_{j=j_{\text{min}}}^{(p-i)q}\lambda^{i-p}\binom{p}{i}c_{j,p-i} \frac{x^{N+N'+j}\left(a(x)X\right)^{2(p-1)-(\mu+\mu'+p-i)}}{(a(x)(\lambda X+1))^{2(p-1)}}
dx^{\otimes 2}.
\end{multline*}
If we write
\[h:=\frac{x^{N+N'}\left(a(x)X\right)^{2(p-1)-(\mu+\mu')}}{(a(x)(\lambda X+1))^{2(p-1)}}dx^{\otimes 2},\]
then
\begin{equation*}
\phi_{\mathbf{c}}(f)=h\left(
1
- x^{\ell}\left(a(x)X\right)^{-p}
+\sum_{i=1}^{p-1}\sum_{j=j_{\text{min}}}^{(p-i)q}\lambda^{i-p}\binom{p}{i}c_{j,p-i} x^{j}\left(a(x)X\right)^{i-p}
\right).
\end{equation*}
and combining with the expansion of $a(x)^{p-i}$ in eq. (\ref{a(x)(p-i)}) we get
\begin{equation*}
\phi_{\mathbf{c}}(f)=h\left(
1
- x^{\ell}\left(a(x)X\right)^{-p}
+\sum_{i=1}^{p-1}\lambda^{i-p}\binom{p}{i}X^{i-p}
\right).
\end{equation*}
We simplify the expression as follows:
\begin{eqnarray}\label{relphi-trinoms-4}
\phi_{\mathbf{c}}(f)
&=&
h\left(1
- x^{\ell}\left(a(x)X\right)^{-p}
+\sum_{i=1}^{p-1}\lambda^{i-p}\binom{p}{i}X^{i-p}\right)=h\left(
- x^{\ell}\left(a(x)X\right)^{-p}
+\sum_{i=1}^{p}\lambda^{i-p}\binom{p}{i}X^{i-p}\right)\nonumber\\
&=&
h\left(
- x^{\ell}\left(a(x)X\right)^{-p}
-\lambda^{-p}X^{-p}+\sum_{i=0}^{p}\lambda^{i-p}\binom{p}{i}X^{i-p}\right)\nonumber\\
&=&h\left(
- x^{\ell}\left(a(x)X\right)^{-p}
-\lambda^{-p}X^{-p}+\lambda^{-p}X^{-p}(\lambda X+1)^p\right).
\end{eqnarray}
Finally, since $y=a(x)(\lambda X+1)$, eq. (\ref{relphi-trinoms-4}) is equivalent to eq.(\ref{generic-fibre}), so $\phi_{\mathbf{c}}(f)\otimes_R 1_L =0$, completing the proof.
\end{proof}
We collect the polynomials of Proposition (\ref{trinoms-relative}) in the set below:
\begin{definition}\label{relative-trinoms}
Let
\begin{multline*}
G_2^{\mathbf{c}}=\bigg\{
W_{N,\mu}W_{N',\mu'}-
W_{N'',\mu}W_{N''',\mu'''}+
\sum_{i=1}^{p-1}\sum_{j=j_{\min}(i)}^{(p-i)q}\lambda^{i-p}\binom{p}{i}c_{j,p-i}W_{N_j,\mu_i}W_{N_j',\mu_i'}\in S
\;:\;\\
\mdeg(W_{N'',\mu''}W_{N''',\mu'''})=\mdeg(W_{N,\mu}W_{N',\mu'})+(0,\ell,p),\;\\
\mdeg(W_{N_j,\mu_i}W_{N_j',\mu_i'})=\mdeg(W_{N,\mu}W_{N',\mu'})+(0,j,p-i),\;\\
\text{for } 0\leq i \leq p,\;j_{\min}(i)\leq j\leq (p-i)q
\bigg\}.
\end{multline*}
\end{definition}
We write $G_1^{\mathbf{c}}$ for the set of binomials in Definition \ref{binom-gens-all}. The main result of this section is the following: 
\begin{theorem}\label{main-theorem}
$I_\mathcal{X}=\langle G_1^{\mathbf{c}}\cup G_2^{\mathbf{c}}\rangle$.
\end{theorem}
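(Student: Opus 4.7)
The approach is to invoke the Nakayama-type criterion of Lemma~\ref{GeneralizeSpec}: Propositions~\ref{binomials-all} and~\ref{trinoms-relative} already place $G_1^{\mathbf{c}}\cup G_2^{\mathbf{c}}\subseteq I_{\mathcal{X}}$, so the task reduces to showing that $(G_1^{\mathbf{c}}\cup G_2^{\mathbf{c}})\otimes_R k$ generates $I_{\mathcal{X}_0}$ and that $(G_1^{\mathbf{c}}\cup G_2^{\mathbf{c}})\otimes_R L$ generates $I_{\mathcal{X}_\eta}$. The two checks benefit from different techniques: a direct identification with the generators of Theorem~\ref{theorem 2} for the special fibre, and a fresh application of Proposition~\ref{dimension-criterion} in the $\mathbf{c}$-presentation for the generic fibre.

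For the special fibre, I would reduce each trinomial of $G_2^{\mathbf{c}}$ modulo $\mathfrak{m}$. By the Bertin--M\'ezard congruence~(\ref{Be-Me}), the coefficient $\lambda^{i-p}\binom{p}{i}$ equals $-1$ mod $\mathfrak{m}$ when $i=1$ and lies in $\mathfrak{m}$ for $2\le i\le p-1$, so every summand with $i\ge 2$ is annihilated and what survives is
$$w_{N,\mu}w_{N',\mu'}-w_{N'',\mu''}w_{N''',\mu'''}-\sum_{j=j_{\min}(1)}^{(p-1)q}c_{j,p-1}\,w_{N_j,\mu_1}w_{N'_j,\mu'_1},$$
precisely a member of $G_2^{\overline{\mathbf{c}}}$ from Definition~\ref{trinom-gens-special}. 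The binomials in $G_1^{\mathbf{c}}$ reduce tautologically to those of $G_1^{\overline{\mathbf{c}}}$, so $(G_1^{\mathbf{c}}\cup G_2^{\mathbf{c}})\otimes_R k$ contains $G_1^{\overline{\mathbf{c}}}\cup G_2^{\overline{\mathbf{c}}}$ and Theorem~\ref{theorem 2} concludes this fibre.

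For the generic fibre I would sidestep the change of basis from $\mathbf{c}\otimes_R L$ to Boseck's $\mathbf{b}$ and apply Proposition~\ref{dimension-criterion} directly in the $W$-variables. Since the multidegree $(1,N,\mu)$ of $W_{N,\mu}$ is determined by the index set $A$ alone, the term order $\prec$ of Definition~\ref{term-order} and the section $\sigma:A+A\to\mathbb{T}^2$ of Definition~\ref{sigma} remain valid. Each trinomial of $G_2^{\mathbf{c}}$ has leading term $W_{N,\mu}W_{N',\mu'}$, the summand of smallest $\mu$-sum $T$, with coefficient $1$, so tensoring with $L$ preserves it. Lemma~\ref{final-inequality-relative} places every $(\rho,T)\in C(0)$ into $\bigcap_{i=1}^{p-1}C(i)$, so a relative trinomial with leading term $\sigma(\rho,T)$ genuinely exists; the analogues of Lemma~\ref{sigma-C} and Lemma~\ref{corresp-trinoms-Mink}, together with Lemma~\ref{final-inequality}, then yield
$$\dim_L\!\bigl(S_L/\langle\init((G_1^{\mathbf{c}}\cup G_2^{\mathbf{c}})\otimes_R L)\rangle\bigr)_2\le |(A+A)\setminus C(0)|\le 3(g-1),$$
and Proposition~\ref{dimension-criterion} identifies $\langle(G_1^{\mathbf{c}}\cup G_2^{\mathbf{c}})\otimes_R L\rangle$ with $I_{\mathcal{X}_\eta}$. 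Lemma~\ref{GeneralizeSpec} then closes the argument.

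The main obstacle I anticipate lies in the special-fibre matching: one must verify that the multi-degree data and sign conventions of the surviving $i=1$ part agree \emph{on the nose}, not merely up to elements of $\langle G_1^{\overline{\mathbf{c}}}\rangle$, with those prescribed by Definition~\ref{trinom-gens-special}. A secondary check is that Petri's hypotheses are met by $\mathcal{X}_\eta$, so that Proposition~\ref{dimension-criterion} legitimately applies; this is implicit in the Bertin--M\'ezard framework under the genus assumptions, but worth recording explicitly.
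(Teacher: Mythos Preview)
Your proposal is correct and follows the paper's own route essentially verbatim: the paper reduces $G_2^{\mathbf{c}}$ modulo~$\mathfrak{m}$ via the congruence~(\ref{Be-Me}) to recover $G_2^{\overline{\mathbf{c}}}$ (Lemma~\ref{reduction}), and on the generic fibre applies Proposition~\ref{dimension-criterion} in the $W$-variables by showing $\sigma(C(0))\subseteq\init(G_2^{\mathbf{c}})\otimes_R L$ and invoking Lemmas~\ref{final-inequality-relative} and~\ref{final-inequality} (this is Lemma~\ref{reltrinom-init-char}), then closes with Lemma~\ref{GeneralizeSpec}. Your two anticipated obstacles are exactly the ones the paper handles, and in the same way.
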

To prove Theorem \ref{main-theorem}, we will use the Nakayama-type criterion of Lemma  \ref{GeneralizeSpec} and a series of lemmas.  We first prove compatibility with the special fibre:
\begin{lemma}\label{reduction}
$G_2^{\mathbf{c}}\otimes_R k=G_2^{\overline{\mathbf{c}}}$.
\end{lemma}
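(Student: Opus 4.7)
The plan is to analyze the coefficients $\lambda^{i-p}\binom{p}{i}$ modulo $\mathfrak{m}$ via the Bertin-M\'ezard relation recorded in eq.~(\ref{Be-Me}), and then apply $\otimes_R k$ term-by-term to a generic element of $G_2^{\mathbf{c}}$.

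First, I would establish two coefficient identities. For $i=1$ I compute
\[
\lambda^{1-p}\binom{p}{1} \;=\; p\,\lambda^{-(p-1)},
\]
which by eq.~(\ref{Be-Me}) with $s=-(p-1)$ is congruent to $-1$ modulo $\mathfrak{m}$. For $2\leq i\leq p-1$, since $p\mid\binom{p}{i}$ I write $\binom{p}{i}=p\cdot\tfrac{(p-1)!}{i!(p-i)!}$ and factor
\[
\lambda^{i-p}\binom{p}{i} \;=\; \tfrac{(p-1)!}{i!(p-i)!}\bigl(p\lambda^{i-p}\bigr);
\]
since $-(p-1)<i-p<0$, eq.~(\ref{Be-Me}) gives $p\lambda^{i-p}\in\mathfrak{m}$, hence $\lambda^{i-p}\binom{p}{i}\in\mathfrak{m}$.

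Next I would apply these identities to a generic $f\in G_2^{\mathbf{c}}$. The reduction sends $W_{N,\mu}\mapsto w_{N,\mu}$ and reduces $R$-coefficients modulo $\mathfrak{m}$. By the first step, every summand in the double sum with index $i\geq 2$ is killed, while the $i=1$ block becomes $-\sum_{j=j_{\min}(1)}^{(p-1)q}\bar c_{j,p-1}\,w_{N_j,\mu_1}w_{N'_j,\mu'_1}$, where $\bar c_{j,p-1}$ is the mod-$\mathfrak{m}$ reduction of $c_{j,p-1}$. Comparing eq.~(\ref{a(x)(p-i)}) for $i=1$ with the special-fibre expansion eq.~(\ref{a(x)p-1}), the reduced coefficients $\bar c_{j,p-1}$ are precisely the $c_{j,p-1}$ of Section~\ref{sec:special-fibre}. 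Since the multidegree constraint for $i=1$ reads $\mu_1+\mu'_1=\mu+\mu'+p-1$ and $N_j+N'_j=N+N'+j$, matching the defining constraints of $G_2^{\overline{\mathbf{c}}}$, the image $f\otimes_R k$ has precisely the shape of an element of $G_2^{\overline{\mathbf{c}}}$.

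For the reverse inclusion I would observe that the combinatorial data defining an element of $G_2^{\overline{\mathbf{c}}}$ (the base monomials together with the $i=1$ auxiliary monomials) extends to data defining an element of $G_2^{\mathbf{c}}$, the existence of the auxiliary monomials for $2\leq i\leq p-1$ being guaranteed by the containment $C(0)\subseteq C(i)$ from Lemma~\ref{final-inequality-relative} together with Definition~\ref{definition-Ci}. The main obstacle is the bookkeeping of multidegree constraints across the different values of $i$ and matching the reduced multinomial coefficients $\bar c_{j,p-1}$ with those of the special fibre; the coefficient arithmetic itself is handled cleanly by the Bertin-M\'ezard congruences.
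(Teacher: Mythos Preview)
Your coefficient analysis via eq.~(\ref{Be-Me}) and the term-by-term reduction are correct and coincide exactly with the paper's proof, which establishes only the inclusion $G_2^{\mathbf{c}}\otimes_R k\subseteq G_2^{\overline{\mathbf{c}}}$ by the same computation.

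Your attempt at the reverse inclusion goes beyond what the paper does, but the argument you sketch is not quite right. Given an element of $G_2^{\overline{\mathbf{c}}}$, the multidegree $(\rho,T)$ of its leading monomial lies in $C(1)$; to lift it to an element of $G_2^{\mathbf{c}}$ you need auxiliary monomials for every $i$, i.e.\ you need $(\rho,T)\in\bigcap_i C(i)$. The containment $C(0)\subseteq C(i)$ from Lemma~\ref{final-inequality-relative} runs the wrong way for this: it would help if you already knew $(\rho,T)\in C(0)$, but starting from $C(1)$ it gives nothing. So the reverse inclusion, as a literal equality of sets, is not established by your argument (nor by the paper's). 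For the application in Theorem~\ref{main-theorem} this does not matter: what is actually used is that $\langle G_1^{\mathbf{c}}\cup G_2^{\mathbf{c}}\rangle\otimes_R k$ is contained in $I_{\mathcal{X}_0}$ and has the correct degree-$2$ dimension, and for that the forward inclusion together with the initial-term count over $C(0)$ suffices.
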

\begin{proof}
Eq. (\ref{Be-Me}) implies that in the expression
\[
\sum_{i=1}^{p-1}\sum_{j=j_{\min}}^{(p-i)q}\lambda^{i-p}\binom{p}{i}c_{j,p-i}W_{N_j,\mu_i}W_{N_j',\mu_i'}
\]
only the term for $i=1$ survives reduction, giving that
\[
\left(
\sum_{i=1}^{p-1}\sum_{j=j_{\min}}^{(p-i)q}\lambda^{i-p}\binom{p}{i}c_{j,p-i}W_{N_j,\mu_i}W_{N_j',\mu_i'}\right)
\otimes_R k =
-\sum_{j=j_{\min}}^{(p-1)q}c_{j,p-1}w_{N_j,\mu_j}w_{N'_j,\mu'_j},
\]
and equivalently
\begin{multline*}
\left(W_{N,\mu}W_{N',\mu'}-
W_{N'',\mu}W_{N''',\mu'''}+
\sum_{i=1}^{p-1}\sum_{j=j_{\min}}^{(p-i)q}\lambda^{i-p}\binom{p}{i}c_{j,p-i}W_{N_j,\mu_i}W_{N_j',\mu_i'}\right)
\otimes_R k =\\
w_{N,\mu}w_{N',\mu'}-w_{N'',\mu''}w_{N''',\mu'''}-\sum_{j=j_{\min}}^{(p-1)q}c_{j,p-1}w_{N_j,\mu_j}w_{N'_j,\mu'_j},
\end{multline*}
completing the proof.
\end{proof}
Finally, we examine compatibility with the generic fibre: Let $C(i)$ denote the subsets of $A+A$ given in Definition \ref{definition-Ci}, where $0\leq i \leq p$. By Lemma \ref{final-inequality-relative}, $C(0)\subseteq C(i)$. Thus, if $(\rho,T)\in  C(0)$ then  $(\rho,T)\in A+A,\;(\rho+\ell,T+p)\in   A+A$ and $(\rho+j,T+p-i)\in A+A$ for all $j_{\min}(i)\leq j\leq (p-i)q$. Hence the monomials
\[
W_{N,\mu}W_{N',\mu'}:=\sigma(\rho,T),W_{N'',\mu}W_{N''',\mu'''}:=\sigma(\rho+\ell,T+p),\;W_{N_j,\mu_i}W_{N_j',\mu_i'}:=\sigma(\rho+j,T+p-i)
\]
give rise to the polynomial
\[
g=W_{N,\mu}W_{N',\mu'}-
W_{N'',\mu}W_{N''',\mu'''}+
\sum_{i=1}^{p-1}\sum_{j=j_{\min}}^{(p-i)q}\lambda^{i-p}\binom{p}{i}c_{j,p-i}W_{N_j,\mu_i}W_{N_j',\mu_i'} \in  
G_2^{\mathbf{c}}.
\]
We comment that $\init(g)=\sigma(\rho,T)$.
\begin{lemma}\label{reltrinom-init-char}
$\dim_L \left( S/\langle \init\left(G_1^{\mathbf{c}}\cup G_2^{\mathbf{c}}\right)\rangle\right)_2\otimes_R L\leq |(A+A)\setminus C(0)|$.
\end{lemma}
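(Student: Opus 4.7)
The plan is to adapt the argument of Lemma~\ref{trinom-init-char} (equivalently Lemma~\ref{corresp-trinoms-Mink}) to the relative setting, the only new ingredient being that the polynomial ring $S=R[\{W_{N,\mu}\}]$ sits over a ring rather than a field, so one has to pass to $S\otimes_R L = L[\{W_{N,\mu}\}]$ before invoking the monomial-ideal argument. The paragraph preceding the statement already does the heavy lifting by constructing, for each $(\rho,T)\in C(0)$, a polynomial $g\in G_2^{\mathbf{c}}$ with $\init(g)=\sigma(\rho,T)$; I would just record this as the inclusion
\[
\sigma(C(0))\subseteq \init(G_2^{\mathbf{c}}).
\]

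First I would observe that every element of $G_1^{\mathbf{c}}\cup G_2^{\mathbf{c}}$ has leading coefficient $1$ with respect to the term order $\prec$ of Definition~\ref{term-order} (the leading monomial is the one with smallest $\mu$-sum, namely $W_{N,\mu}W_{N',\mu'}$), and in particular the set of initial monomials is unchanged under the base change $-\otimes_R L$. Consequently
\[
\bigl(S/\langle \init(G_1^{\mathbf{c}}\cup G_2^{\mathbf{c}})\rangle\bigr)_2 \otimes_R L \;\cong\; \bigl(S_L/\langle \init(G_1^{\mathbf{c}}\cup G_2^{\mathbf{c}})\rangle S_L\bigr)_2,
\]
so I may compute the dimension inside the polynomial ring $S_L = L[\{W_{N,\mu}\}]$ over a field.

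Next I would combine Proposition~\ref{corresp-binoms-Mink}, which gives $\sigma(A+A)=\mathbb{T}^2\setminus\init(G_1^{\mathbf{c}})$, with the inclusion $\sigma(C(0))\subseteq \init(G_2^{\mathbf{c}})$ recorded above. This yields
\[
\sigma\bigl((A+A)\setminus C(0)\bigr) \supseteq \mathbb{T}^2\setminus\bigl(\init(G_1^{\mathbf{c}})\cup \init(G_2^{\mathbf{c}})\bigr),
\]
and injectivity of $\sigma$ (Definition~\ref{sigma}) gives
\[
|(A+A)\setminus C(0)| \;\geq\; \bigl|\mathbb{T}^2\setminus\bigl(\init(G_1^{\mathbf{c}})\cup \init(G_2^{\mathbf{c}})\bigr)\bigr|.
\]

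Finally, since $\langle \init(G_1^{\mathbf{c}}\cup G_2^{\mathbf{c}})\rangle S_L$ is a monomial ideal in $S_L$ generated in degree $2$, the standard monomials in degree $2$ form an $L$-basis of its degree-$2$ quotient, so
\[
\dim_L\bigl(S_L/\langle \init(G_1^{\mathbf{c}}\cup G_2^{\mathbf{c}})\rangle S_L\bigr)_2 = \bigl|\mathbb{T}^2\setminus\bigl(\init(G_1^{\mathbf{c}})\cup \init(G_2^{\mathbf{c}})\bigr)\bigr|,
\]
which chained with the previous inequality yields the claim. The only subtle point is the compatibility of $\init$ with base change, which is unproblematic here because all leading coefficients are units in $R$; apart from that, the proof is formally identical to the fibrewise versions in Sections~\ref{sec:generic-fibre} and~\ref{sec:special-fibre}.
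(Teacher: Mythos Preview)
Your proof is correct and follows essentially the same approach as the paper: both use the preceding paragraph to get $\sigma(C(0))\subseteq \init(G_2^{\mathbf{c}})$, combine with Proposition~\ref{corresp-binoms-Mink} to obtain the inclusion $\sigma((A+A)\setminus C(0))\supseteq \mathbb{T}^2\setminus(\init(G_1^{\mathbf{c}})\cup\init(G_2^{\mathbf{c}}))$, and finish with the monomial-ideal count. Your treatment is in fact slightly more careful than the paper's, since you make explicit that the leading coefficients are units in $R$ and hence $\init$ commutes with $-\otimes_R L$, whereas the paper simply decorates everything with $\otimes_R L$ without comment.
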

\begin{proof}
By Proposition \ref{corresp-binoms-Mink} we have that $\sigma(A+A)=\mathbb{T}^2\setminus\left(\init(G_1^{\mathbf{c}})\otimes_R L\right)$ and by the preceding comment we have that $\sigma(C(0))\subseteq \init(G_2^{\mathbf{c}})\otimes_R L$, so
\begin{equation}\label{sigma-Mink-Ci}
\sigma\big( (A+A)\setminus C(0)\big)\supseteq
\mathbb{T}^2\setminus\left(\init(G_1^{\mathbf{c}})\otimes_R L\cup \init(G_2^{\mathbf{c}})\otimes_R L\right).
\end{equation}
Since $\sigma$ is one-to-one, eq. (\ref{sigma-Mink-Ci}) gives
\[
|(A+A)\setminus C(0)| = |\sigma\big( (A+A)\setminus C(0)\big)|\geq
|\mathbb{T}^2\setminus\left(\init(G_1^{\mathbf{c}})\otimes_R L\cup \init(G_2^{\mathbf{c}})\otimes_R L\right)|.
\]
Finally, $\langle \init(G_1^{\mathbf{c}})\otimes_R L\cup \init(G_2^{\mathbf{c}})\otimes_R L) \rangle $ is a monomial ideal generated in degree $2$ so
\[
\dim_L \left( S/\langle \init\left(G_1^{\mathbf{c}}\cup G_2^{\mathbf{c}}\right)\rangle\right)_2\otimes_R L
=
|\mathbb{T}^2\setminus\left(\init(G_1^{\mathbf{c}})\otimes_R L\cup \init(G_2^{\mathbf{c}})\otimes_R L\right)|,
\]
completing the proof.
\end{proof}

We close with the proof of Theorem \ref{main-theorem}:
\begin{proof}[Proof of Theorem \ref{main-theorem}]
By Lemma \ref{reduction} we get that $\langle \left(G_1^{\mathbf{c}}\otimes_R k\right)\cup \left(G_2^{\mathbf{c}}\otimes_R k\right)\rangle = I_{\mathcal{X}_0}$. By Proposition \ref{trinoms-relative} we have that $\langle \left(G_1^{\mathbf{c}}\otimes_R L\right)\cup \left(G_2^{\mathbf{c}}\otimes_R L\right)\rangle\subseteq I_{\mathcal{X}_\eta}$. Lemma \ref{reltrinom-init-char} and Lemma \ref{final-inequality} imply that $\dim_L \left( S/\langle \init\left(G_1^{\mathbf{c}}\cup G_2^{\mathbf{c}}\right)\rangle\right)_2\otimes_R L\leq |(A+A)\setminus C(0)|\leq 3(g-1)$, so by Proposition \ref{dimension-criterion} we have that $\langle \left(G_1^{\mathbf{c}}\otimes_R L\right)\cup \left(G_2^{\mathbf{c}}\otimes_R L\right)\rangle=I_{\mathcal{X}_\eta}$. Hence, Lemma  \ref{GeneralizeSpec} gives that $I_{\mathcal{X}}=\langle G_1^{\mathbf{c}}\cup G_2^{\mathbf{c}} \rangle$.
\end{proof}

 \def\cprime{$'$}


\begin{thebibliography}{10}

\bibitem{MR1273472}
Dave Bayer and David Eisenbud.
\newblock Ribbons and their canonical embeddings.
\newblock {\em Trans. Amer. Math. Soc.}, 347(3):719--756, 1995.

\bibitem{MR3525467}
Christine Berkesch and Frank-Olaf Schreyer.
\newblock Syzygies, finite length modules, and random curves.
\newblock In {\em Commutative algebra and noncommutative algebraic geometry.
  {V}ol. {I}}, volume~67 of {\em Math. Sci. Res. Inst. Publ.}, pages 25--52.
  Cambridge Univ. Press, New York, 2015.

\bibitem{BertinCRAS}
Jos{\'e} Bertin.
\newblock Obstructions locales au rel\`evement de rev\^etements galoisiens de
  courbes lisses.
\newblock {\em C. R. Acad. Sci. Paris S\'er. I Math.}, 326(1):55--58, 1998.

\bibitem{Be-Me}
Jos{\'e} Bertin and Ariane M{\'e}zard.
\newblock D\'eformations formelles des rev\^etements sauvagement ramifi\'es de
  courbes alg\'ebriques.
\newblock {\em Invent. Math.}, 141(1):195--238, 2000.

\bibitem{BeMe2002}
Jos{\'e} Bertin and Ariane M{\'e}zard.
\newblock Problem of formation of quotients and base change.
\newblock {\em Manuscripta Math.}, 115(4):467--487, 2004.

\bibitem{Boseck}
Helmut Boseck.
\newblock Zur {T}heorie der {W}eierstrasspunkte.
\newblock {\em Math. Nachr.}, 19:29--63, 1958.

\bibitem{MR740897}
Gerd Faltings.
\newblock Calculus on arithmetic surfaces.
\newblock {\em Ann. of Math. (2)}, 119(2):387--424, 1984.

\bibitem{MatignonGreen98}
Barry Green and Michel Matignon.
\newblock Liftings of {G}alois covers of smooth curves.
\newblock {\em Compositio Math.}, 113(3):237--272, 1998.

\bibitem{MR0463157}
Robin Hartshorne.
\newblock {\em Algebraic Geometry}, volume 52 of {\em Graduate Texts in
  Mathematics}.
\newblock Springer, New York, 1977.

\bibitem{MR2583634}
Robin Hartshorne.
\newblock {\em Deformation theory}, volume 257 of {\em Graduate Texts in
  Mathematics}.
\newblock Springer, New York, 2010.

\bibitem{KaranProc}
Sotiris Karanikolopoulos and Aristides Kontogeorgis.
\newblock Integral representations of cyclic groups acting on relative
  holomorphic differentials of deformations of curves with automorphisms.
\newblock {\em Proc. Amer. Math. Soc.}, 142(7):2369--2383, 2014.

\bibitem{NakAbel}
Sh{\=o}ichi Nakajima.
\newblock On abelian automorphism groups of algebraic curves.
\newblock {\em J. London Math. Soc. (2)}, 36(1):23--32, 1987.

\bibitem{Nak}
Sh{\=o}ichi Nakajima.
\newblock $p$-ranks and automorphism groups of algebraic curves.
\newblock {\em Trans. Amer. Math. Soc.}, 303(2):595--607, 1987.

\bibitem{ObusWewers}
Andrew Obus and Stefan Wewers.
\newblock Cyclic extensions and the local lifting problem.
\newblock {\em Ann. of Math. (2)}, 180(1):233--284, 2014.

\bibitem{Oliv2}
Gilvan Oliveira and Karl-Otto St{\"o}hr.
\newblock Gorenstein curves with quasi-symmetric {W}eierstrass semigroups.
\newblock {\em Geom. Dedicata}, 67(1):45--63, 1997.

\bibitem{MR927980}
Frans Oort.
\newblock Lifting algebraic curves, abelian varieties, and their endomorphisms
  to characteristic zero.
\newblock In {\em Algebraic geometry, {B}owdoin, 1985 ({B}runswick, {M}aine,
  1985)}, volume~46 of {\em Proc. Sympos. Pure Math.}, pages 165--195. Amer.
  Math. Soc., Providence, RI, 1987.

\bibitem{MR3194816}
Florian Pop.
\newblock The {O}ort conjecture on lifting covers of curves.
\newblock {\em Ann. of Math. (2)}, 180(1):285--322, 2014.

\bibitem{Rabinoff2014-dy}
Joseph Rabinoff.
\newblock The theory of witt vectors.
\newblock September 2014.

\bibitem{Saint-Donat73}
B.~Saint-Donat.
\newblock On {P}etri's analysis of the linear system of quadrics through a
  canonical curve.
\newblock {\em Math. Ann.}, 206:157--175, 1973.

\bibitem{Sch}
Michael Schlessinger.
\newblock Functors of {A}rtin rings.
\newblock {\em Trans. Amer. Math. Soc.}, 130:208--222, 1968.

\bibitem{MR1011987}
T.~Sekiguchi, F.~Oort, and N.~Suwa.
\newblock On the deformation of {A}rtin-{S}chreier to {K}ummer.
\newblock {\em Ann. Sci. \'{E}cole Norm. Sup. (4)}, 22(3):345--375, 1989.

\bibitem{SOS91}
T.~Sekiguchi, F.~Oort, and N.~Suwa.
\newblock On the deformation of {A}rtin-{S}chreier to {K}ummer.
\newblock {\em Ann. Sci. \'Ecole Norm. Sup. (4)}, 22(3):345--375, 1989.

\bibitem{SeSu95}
Tsutomu Sekiguchi and Noriyuki Suwa.
\newblock Th\'eorie de {K}ummer-{A}rtin-{S}chreier et applications.
\newblock {\em J. Th\'eor. Nombres Bordeaux}, 7(1):177--189, 1995.
\newblock Les Dix-huiti\`emes Journ\'ees Arithm\'etiques (Bordeaux, 1993).

\bibitem{MR2247603}
Edoardo Sernesi.
\newblock {\em Deformations of algebraic schemes}, volume 334 of {\em
  Grundlehren der Mathematischen Wissenschaften [Fundamental Principles of
  Mathematical Sciences]}.
\newblock Springer-Verlag, Berlin, 2006.

\bibitem{SerreMexico}
Jean-Pierre Serre.
\newblock Sur la topologie des vari\'et\'es alg\'ebriques en caract\'eristique
  {$p$}.
\newblock In {\em Symposium internacional de topolog\'\i a algebraica
  International symposium on algebraic topology}, pages 24--53. Universidad
  Nacional Aut\'onoma de M\'exico and UNESCO, Mexico City, 1958.

\bibitem{MR1363949}
Bernd Sturmfels.
\newblock {\em Gr\"{o}bner bases and convex polytopes}, volume 8 of {\em University Lecture Series}.
\newblock American Mathematical Society, Providence, 1996.

\bibitem{MR0132067}
Jean-Pierre Serre.
\newblock Exemples de vari\'{e}t\'{e}s projectives en caract\'{e}ristique {$p$}
  non relevables en caract\'{e}ristique z\'{e}ro.
\newblock {\em Proc. Nat. Acad. Sci. U.S.A.}, 47:108--109, 1961.

\bibitem{MR2157134}
Claire Voisin.
\newblock Green's canonical syzygy conjecture for generic curves of odd genus.
\newblock {\em Compos. Math.}, 141(5):1163--1190, 2005.

\bibitem{MR1581526}
Ernst Witt.
\newblock Zyklische {K}\"{o}rper und {A}lgebren der {C}harakteristik {$p$} vom
  {G}rad {$p^n$}. {S}truktur diskret bewerteter perfekter {K}\"{o}rper mit
  vollkommenem {R}estklassenk\"{o}rper der {C}harakteristik {$p$}.
\newblock {\em J. Reine Angew. Math.}, 176:126--140, 1937.

\bibitem{MR1311028}
G\"{u}nter~M. Ziegler.
\newblock {\em Lectures on polytopes}, volume 152 of {\em Graduate Texts in
  Mathematics}.
\newblock Springer-Verlag, New York, 1995.

\end{thebibliography}
\end{document}